%
%
%
%

\documentclass[10pt]{article}

\usepackage{amssymb}   
\usepackage{amsthm}    
\usepackage{amsmath}   
\usepackage{stmaryrd}  
\usepackage{titletoc}  
\usepackage{mathrsfs}  
\usepackage{graphicx}
\usepackage{xcolor}
\usepackage[toc,page,title,titletoc,header]{appendix}
\usepackage{ulem}

\vfuzz2pt 
\hfuzz2pt 

\newlength{\defbaselineskip}
\setlength{\defbaselineskip}{\baselineskip}
\newcommand{\setlinespacing}[1]%
           {\setlength{\baselineskip}{#1 \defbaselineskip}}




\textwidth =16cm \topmargin =-18mm \textheight =23.5cm \oddsidemargin=5pt
\evensidemargin=0pt

\makeatletter\@addtoreset{equation}{section} \makeatother
 \allowdisplaybreaks



\newtheorem{theorem}{Theorem}[section]
\newtheorem{lemma}[theorem]{Lemma}

\theoremstyle{definition}
\newtheorem{definition}[theorem]{Definition}

\newtheorem{proposition}[theorem]{Proposition}
\newtheorem{corollary}[theorem]{Corollary}

\theoremstyle{remark}
\newtheorem{remark}[theorem]{Remark}

\numberwithin{equation}{section}



\parindent 0em
\parskip 2ex

\begin{document}

\title{Mean Field Games with Singular Controls\thanks{Financial support by the Berlin Mathematical School (BMS) and the SFB 649 {\sl Economic Risk} is gratefully acknowledged. We thank an anonymous referee for many comments and suggestions that greatly helped to improve the presentation of the results. }}

\author{Guanxing Fu\footnote{Department of Mathematics, and Berlin Mathematical School, Humboldt-Universit\"at zu Berlin
         Unter den Linden 6, 10099 Berlin, Germany; email: fuguanxing725@gmail.com}~ and Ulrich Horst\footnote{Department of Mathematics, and School of Business and Economics, Humboldt-Universit\"at zu Berlin
         Unter den Linden 6, 10099 Berlin, Germany; email: horst@math.hu-berlin.de}}
%





\maketitle

\begin{abstract}
This paper establishes the existence of relaxed solutions to mean field games (MFGs for short) with singular controls. 
 We also prove approximations of solutions results for a particular class of MFGs with singular controls by solutions, respectively control rules, for MFGs with purely regular controls. Our existence and approximation results strongly hinge on the use of the Skorokhod $M_1$ topology on the space of c\`adl\`ag functions.
\end{abstract}

{\bf AMS Subject Classification:} 93E20, 91B70, 60H30.

{\bf Keywords:}{~mean field game, singular control, relaxed control, Skorokhod $M_1$ topology.}

\section{Introduction and overview}

Starting with the seminal papers \cite{huang-malhame-caines-2006,lasry-lions-2007}, the analysis of \textit{mean field games} (MFGs) has received considerable attention in the stochastic control and financial mathematics literature. In a standard MFG, each player $i \in \{1, ..., N\}$ chooses an action from a given set of admissible controls that minimizes a cost functional of the form
    \begin{equation}\label{cost-fun}
        J^i(u)=E\left[\int_0^Tf(t,X^i_t,\bar{\mu}^N_t,u^i_t)dt+g(X^i_T,\bar{\mu}^N_T)\right]
    \end{equation}
subject to the state dynamics
   \begin{equation}\label{state-without-singular}
    \left\{\begin{array}{ll}
        dX^i_t=b(t,X^i_t,\bar{\mu}^N_t,u^i_t)\,dt+\sigma(t,X^i_t,\bar{\mu}^N_t,u^i_t)\,dW^i_t,\\
        X^i_0=x_0
        \end{array} .
        \right.
    \end{equation}
Here $W^1, ..., W^N$ are independent Brownian motions defined on some underlying filtered probability space, $u=(u^1,\cdots,u^N)$, $u^i=(u^i_t)_{t \in [0,T]}$ is an adapted stochastic process, the {\it action} of player $i$, and $\bar{\mu}^N_t:=\frac{1}{N}\sum_{j=1}^N\delta_{X^j_t}$ denotes the empirical distribution of the individual players' states at time $t \in [0,T]$. In particular, all players are identical ex ante and each player interacts with the other players only through the empirical distribution of the state processes.

The existence of {\it approximate Nash equilibria} in the above game for large populations has been established in \cite{carmona-delarue-2013, huang-malhame-caines-2006} using a representative agent approach. In view of the independence of the Brownian motions the idea is to first approximate the dynamics of the empirical distribution by a deterministic measure-valued process, and to consider instead the optimization problem of a representative player that takes the distribution of the states as given, and then to solve the fixed-point problem of finding a measure-valued process such that the distribution of the representative player's state process $X$ under her optimal strategy coincides with that process.\footnote{The idea of decoupling local from global dynamic in large population has been applied to equilibrium models of social interaction in e.g. \cite{HS1, HS2}.}

Following the representative agent approach, a MFG can then be formally described by a coupled optimization and fixed point problem of the form:
\begin{equation}\label{model-MFG}
        \left\{
        \begin{array}{ll}
        1.&\textrm{fix a deterministic function }t\in[0,T]\mapsto\mu_t\in\mathcal{P}(\mathbb{R}^d);\\
        2.&\textrm{solve the corresponding stochastic control problem}:\\
         &\inf_{u} E\left[\int_0^Tf(t,X_t,\mu_t,u_t)\,dt+g(X_T,\mu_T)\right],\\
        &\textrm{subject to}\\
        &dX_t =b(t,X_t,{\mu}_t,u_t)\,dt+\sigma(t,X_t,{\mu}_t,u_t)\,dW_t\\
        &X_{0} =x_0,\\
        3.&\textrm{solve the fixed point problem:} ~Law(X) =\mu,
        \end{array}
        \right.
    \end{equation}
where $\mathcal{P}(\mathbb{R}^d)$ is the space of probability measures on $\mathbb{R}^d$ and $Law(X)$ denotes the law of the process $X$.

There are essentially three approaches to solve mean field games. In their original paper \cite{lasry-lions-2007}, Lasry and Lions followed an analytic approach. They analyzed a coupled forward-backward PDE system, where the backward component is the Hamiltion-Jacobi-Bellman equation arising from the representative agent's optimization problem, and the forward component is a Kolmogorov-Fokker-Planck equation that characterizes the dynamics of the state process.

A second, more probabilistic, approach was introduced by Carmona and Delarue in \cite{carmona-delarue-2013}. Using a maximum principle of Pontryagin type, they showed that the fixed point problem reduces to solving a McKean-Vlasov forward-backward SDEs (FBSDEs for short). Other results based on probabilistic approaches include \cite{ahuja-2016,bensoussan-sung-yam-yung-2016,carmona-delarue-lachapelle}. Among them,  \cite{bensoussan-sung-yam-yung-2016,carmona-delarue-lachapelle} consider linear-quadratic MFGs, while \cite{ahuja-2016,carmona-zhu-2016} consider MFGs with common noise and with major and minor players, respectively. A class of MFGs in which the interaction takes place both through the state dynamics and the controls has recently been introduced in \cite{carmona-lacker-2015}. In that paper the weak formulation, or martingale optimality principle, is used to prove the existence of a solution.

A \textit{relaxed solution} concept to MFGs was introduced by Lacker in \cite{Lacker-2015}. Considering MFGs from a more game-theoretic perspective, the idea is to search for equilibria in relaxed controls (``mixed strategies'') by first establishing the upper hemi-continuity of the representative agent's best response correspondence to a given $\mu$ using Berge's maximum theorem, and then to apply the Kakutani-Fan-Glicksberg fixed point theorem in order to establish the existence of some measure-valued process $\mu^*$ such that the law of the agent's state process under a best response to $\mu^*$ coincides with that process.
Relaxed controls date back to Young \cite{young-1937}. They were later applied to stochastic control in, e.g. \cite{haussmann-lepeltier-1990,Haussmann-Suo-1995,Karoui-Nguyen-Picque-1990}, to MFGs in \cite{Lacker-2015}, and to MFGs with common noise in \cite{carmona-delarue-lacker-2016}. 

Applications of MFGs range from models of optimal  exploitation of exhaustible resources \cite{chan-sircar-2015,gueant-lasry-linos-2010} to systemic risk \cite{carmona-fouque-sun}, and from principal-agent problems \cite{elie-mastrolia-possamai-2016} to problems of optimal trading under market impact \cite{carmona-lacker-2015,jaimungal-nourian-2015}. Motivated by possible applications to optimal portfolio liquidation under strategic interaction that allow for both block trades and absolutely continuous trades as in \cite{HN}, this paper provides a probabilistic framework for analyzing MFGs with singular controls. {Extending \cite{Lacker-2015}, we consider MFGs with singular controls of the form}
\begin{equation}\label{model-MFGs-singular-control}
        \left\{
        \begin{array}{ll}
        1.&\textrm{fix a deterministic function }t\in[0,T]\mapsto\mu_t\in\mathcal{P}(\mathbb{R}^d);\\
        2.&\textrm{solve the corresponding stochastic singular control problem}:\\
         &\inf_{u,Z} E\left[\int_0^Tf(t,X_t,\mu_t,u_t)\,dt+g(X_T,\mu_T)+\int_0^Th(t)\,dZ_t\right],\\
        &\textrm{subject to}\\
        &dX_t =b(t,X_t,{\mu}_t,u_t)\,dt+\sigma(t,X_t,{\mu}_t,u_t)\,dW_t+c(t)\,dZ_t,\\
       3.&\textrm{solve the fixed point problem:~}
        Law(X) =\mu,
        \end{array}
        \right.
    \end{equation}
where $u=(u_t)_{t \in [0,T]}$ is the {\it regular control}, and $Z=(Z_t)_{t \in [0,t]}$ is the {\it singular control}.
When singular controls are admissible, the state process no longer takes values in the space of continuous functions, but rather in the Skorokhod space $\mathcal{D}(0,T)$ of all c\`adl\`ag functions. The key is then to identify a suitable topology on the Skorokhod space with respect to which the compactness and continuity assumptions of the maximum and the fixed-point theorems are satisfied.

There are essentially three possible topologies on the space of c\`adl\`ag functions: the (standard) Skorokhod $J_1$ topology ($J_1$ topology for short), the Meyer-Zheng topology (or pseudo-path topology), and the Skorokhod $M_1$ topology ($M_1$ topology for short). The $M_1$ topology seems to be the most appropriate one for our purposes. First, the set of bounded singular controls is compact in the $M_1$ topology but not in the $J_1$ topology. Second, there is no explicit expression for the metric corresponding to Meyer-Zheng topology. In particular, one cannot bound the value of a function at given points in time by the Meyer-Zheng topology. Third, the $M_1$ topology has better continuity properties than the $J_1$ topology. For instance, it allows for an approximation of discontinuous functions by continuous ones. This enables us to approximate solutions to certain classes of MFGs with singular controls by solutions to MFGs with only regular controls. Appendix B summarizes useful properties of the $M_1$ topology; for more details, we refer to the textbook of Whitt \cite{Whitt-2002}.

To the best of our knowledge, ours is the first paper to establish the existence of solutions results to MFGs with singular controls\footnote{The recent paper \cite{GL-2017} only considers absolutely continuous singular controls. Our notion of singular controls is more general.}. As a byproduct, we obtain a new proof for the existence of optimal (relaxed) controls for the corresponding class of stochastic singular control problems. A similar control problem, albeit with a trivial terminal cost function has been analyzed in \cite{Haussmann-Suo-1995}. While the methods and techniques applied therein can be extended to non-trivial terminal cost functions after a modification of the control problem, they cannot be used to prove existence of equilibria in MFGs. In fact, in \cite{Haussmann-Suo-1995}, it is assumed that the state space $\mathcal{D}(0,T)$ is endowed with Meyer-Zheng topology, and that the spaces of admissible singular and regular controls are endowed with the topology of weak convergence and the stable topology, respectively. With this choice of topologies the continuity of cost functional and the upper-hemicontinuity of distribution of the representative agent's state process under the optimal control w.r.t. to a given process $\mu$ cannot be established. As a second byproduct we obtain a novel existence of solutions result for a class of McKean-Vlasov stochastic singular control problems. 

Our second main contributions are two approximation results that allow us to approximate solutions to a certain class of MFGs with singular controls by the solutions to MFGs with only regular controls. The approximation result, too, strongly hinges on the choice of the $M_1$ topology.

The rest of this paper is organized as follows: in Section 2, we recall the notion of relaxed controls for singular stochastic control problems, introduce MFGs with singular controls and state our main existence of solutions result. The proof is given in Section 3. In Section 4, we state and prove two approximation results for MFGs with singular controls by MFGs with regular controls. Appendix A recalls known results and definitions that are used throughout this paper. Append B reviews key properties of the $M_1$ topology.


\section{Assumptions and the main results}\label{definition-assumption}
In this section we introduce MFGs with singular controls and state our main existence of solutions result. For a metric space $(E,\varrho)$ we denote by $\mathcal{P}_p(E)$ the class of all probability measures on $E$ with finite moment of $p$-th order. For $p=0$ we write $\mathcal{P}(E)$ instead of $\mathcal{P}_0(E)$. The set $\mathcal{P}_p(E)$ is endowed with the Wasserstein distance $\mathcal{W}_{p,(E,\varrho)}$; see Definition \ref{Wasserstein-space-metric}. For a given interval $\mathbb{I}$ we denote by  $\mathcal{D}(\mathbb{I})$ the Skorokhod space of all $\mathbb{R}^d$-valued c\`adl\`ag functions on $\mathbb{I}$, by $\mathcal{A}(\mathbb{I}) \subset \mathcal{D}(\mathbb{I})$ the subset of nondecreasing functions, by $\mathcal{C}(\mathbb I)\subset \mathcal{D}(\mathbb{I})$ the subset of continuous functions, and by $\mathcal{U}(\mathbb I)$ the set of all measures on $\mathbb{I} \times U$ for some metric space $U$, whose first marginal is the Lebesgue measure on $\mathbb I$, and whose second marginal belongs to $\mathcal{P}(U)$. For reasons that will become clear later we identify processes on $[0,T]$ with processes on the whole real line. For instance, we identify the space $\mathcal{D}(0,T)$ with the space
  \[
        \widetilde{\mathcal{D}}_{0,T}(\mathbb{R})=\{x\in\mathcal{D}(\mathbb{R}):x_t=0\textrm{ if }t<0\textrm{ and }x_t=x_T \textrm{ if }t>T\}.
    \]
    Likewise, we denote by $\widetilde{\mathcal{A}}_{0,T}(\mathbb{R})$ and $\widetilde{\mathcal{C}}_{0,T}(\mathbb{R})$ the subspace of $\widetilde{\mathcal{D}}_{0,T}(\mathbb{R})$ with nondecreasing and continuous paths, respectively. Moreover, we denote by $\widetilde{\mathcal U}_{0,T}(\mathbb R)$ all measures $q(dt,du)$ on $\mathbb R\times U$ whose restriction to $[0,T]$ belongs to $\mathcal U(0,T)$, and whose restrictions to $(-\infty,0)$ and $(T,\infty)$ are of the form $q(dt,du)=\delta_{\widetilde{u_0}}(du)dt$ and $q(dt,du)=\delta_{\widetilde{u_T}}(du)dt$ for some $\widetilde{u_0}\in U$ and $\widetilde{u_T}\in U$, respectively. We occasionally drop the subscripts $0$ and $T$ if there is no risk of confusion.
    Throughout this paper, $C > 0$ denotes a generic constant that may vary from line to line.

\subsection{Singular stochastic control problems}
Before introducing MFGs with singular controls, we informally review stochastic singular control problems of the form:
\begin{equation}\label{model-stochastic-singular-control}
        \left\{
        \begin{array}{ll}
        \inf_{u,Z} E\left[\int_0^Tf(t,X_t,u_t)\,dt+g(X_T)+\int_0^Th(t)\,dZ_t\right],\\
        \textrm{subject to}\\
        dX_t=b(t,X_t,u_t)\,dt+\sigma(t,X_t,u_t)\,dW_t+c(t)\,dZ_t,\\
        X_{0-}=0.
        \end{array}
        \right.
    \end{equation}
where all parameters are measurable in their respective arguments and are such that the control problem makes sense; see, e.g. \cite{Haussmann-Suo-1995} for details.\footnote{Our specific assumptions on the model parameters are introduced in Section \ref{intro-MFG-singular} below.} The {\it regular control} $u=(u_t)_{t \in [0,T]}$ takes values in a compact metric space $U$, and the {\sl singular control} $Z=(Z_t)_{t \in [0,T]}$ takes values in $\mathbb{R}^d$. For convenience we sometimes write $Z\in\widetilde{\mathcal{A}}(\mathbb{R})$ by which we mean that the sample paths of the stochastic process $Z$ belong to $\widetilde{\mathcal{A}}(\mathbb{R})$. Similarly, we occasionally write $X\in\widetilde {\mathcal D}(\mathbb R)$ and $Y\in\widetilde {\mathcal C}(\mathbb R)$.
%
%

\subsubsection{Relaxed controls}

The existence of optimal {\sl relaxed controls} to stochastic singular control problems has been addressed in \cite{Haussmann-Suo-1995} using the so-called compactification method. We use a similar approach to solve MFGs with singular controls, albeit in different topological setting. The following notion of relaxed controls follows \cite{Haussmann-Suo-1995} where we adopt our convention that all processes are extended to the whole real line.

\begin{definition}\label{relaxed-control}
The tuple $r=(\Omega,\mathcal{F},\{\mathcal{F}_t,t\in\mathbb{R}\},\mathbb{P},X,\underline{Q},Z)$ is called a relaxed control if
    \begin{itemize}
        \item[1.] $(\Omega,\mathcal{F},\{\mathcal{F}_t,t\in\mathbb{R}\},\mathbb{P})$ is a filtered probability space;
        \item[2.] $\mathbb{P}(X_{t}=0,Z_{t}=0, \underline{Q}_t(du)=\delta_{\widetilde{u_0}}(du)\textrm{ if }t<0; X_{t}=X_T,Z_{t}=Z_T, \underline{Q}_t(du)=\delta_{\widetilde{u_T}}(du)\textrm{ if }t>T)=1$, for some $\widetilde{u_0},\widetilde{u_T}\in U$;
        \item[3.] $\underline{Q}:\mathbb{R}\times\Omega\rightarrow\mathcal{P}(U)$ is $\{\mathcal{F}_t,t\in\mathbb{R}\}$ progressively measurable,  $Z$ is $\{\mathcal{F}_t,t\in\mathbb{R}\}$ progressively measurable and $Z\in\widetilde{\mathcal{A}}(\mathbb{R})$;
        \item[4.] $X$ is a $\{\mathcal{F}_t,t\in\mathbb{R}\}$ adapted stochastic process, $X\in\widetilde{\mathcal{D}}(\mathbb{R})$ and for each $\phi\in \mathcal{C}^2_b(\mathbb{R}^d)$, the space of all continuous and bounded functions with continuous and bounded first- and second-order derivatives, $\mathcal{M}^{\phi}$ is a well defined $\mathbb{P}$ continuous martingale, where
            \begin{equation*}
            \begin{split}
            \mathcal{M}^{\phi}_t:=&~\phi(X_t)-\int_0^t\int_U\mathcal{L}\phi(s,X_s,u)\,\underline{Q}_s(du)ds-\int_0^t(\partial_x\phi(X_{s-}))^{\top}c(s)dZ_s\\
            &-\sum_{0\leq s\leq t}\big(\phi(X_s)-\phi(X_{s-})-(\partial_x\phi(X_{s-}))^{\top}\triangle X_s\big),\qquad t\in[0,T]
             \end{split}
             \end{equation*}
             with $\mathcal{L}\phi(t,x,u):=\frac{1}{2}\sum_{ij}a_{ij}(t,x,u)\frac{\partial^2\phi(x)}{\partial_{x_i}\partial_{x_j}}+\sum_ib_i(t,x,u)\partial_{x_i}\phi(x)$ and $a(t,x,u)=\sigma\sigma^{\top}(t,x,u)$.
    \end{itemize}
 \end{definition}
%

 The cost functional corresponding to a relaxed control $r$ is defined by
    \begin{equation}\label{cost-corresponding-relaxed-control}
        \widetilde{J}(r)=E^{\mathbb{P}}\left[\int_0^T\int_Uf(t,X_t,u)\,\underline{Q}_t(du)dt+\int_0^Th(t)\,dZ_t+g(X_T)\right].
    \end{equation}

Let $(\Omega,\mathcal{F},\{\mathcal{F}_t,t\in\mathbb{R}\},\mathbb{P},X,\underline{Q},Z)$ be a relaxed control. If the process $\underline Q$ is of the form $\underline Q_t(du)=\delta_{u_t}(du)$, for some progressively measurable $U$-valued process $u$, then we call $(\Omega,\mathcal{F},\{\mathcal{F}_t,t\in\mathbb{R}\},\mathbb{P},X,u,Z)$ a {\sl strict control}.\footnote{If there is no risk of confusion, then we call the processes $\underline Q$, respectively $u$ the relaxed, respectively strict control.} In particular, any strict control corresponds to a relaxed control. Relaxed control can thus be viewed as a form of mixed strategies over strict controls. In particular, both the cost function and the state dynamics (more precisely, the martingale problem) are linear in relaxed controls. Furthermore, compactness w.r.t.~relaxed controls is much easier to verify than compactness w.r.t.~strict controls. Under suitable convexity conditions on the model data, the optimization problem over the set of relaxed controls is equivalent to the one over strict controls as shown by the following remark.

\begin{remark}\label{relaxed-to-strict}
\begin{enumerate}
\item For $(t,x)\in[0,T]\times \mathbb{R}^d$, let
    \[
        K(t,x)=\{(a(t,x,u),b(t,x,u),e):~e\geq f(t,x,u),~u\in U\}.
    \]
If $K(t,x)$ is convex for each $(t,x)\in[0,T]\times \mathbb{R}^d$, then it can be shown that for each relaxed control, there exists a strict control and a singular control with smaller or equal cost. Indeed, by the proof of \cite[Theoerem 3.6]{haussmann-lepeltier-1990}, for any relaxed control $r=(\Omega,\mathcal{F},\mathcal{F}_t,\mathbb{P},X,\underline Q,Z)$, there exists a progressively measurable $U$-valued process $\bar{u}$ and a  $\mathbb{R}^+$-valued process $\bar{v}$ such that for almost all $(t, \omega) \in [0,T] \times \Omega$,
\begin{equation}\label{relaxed-control-to-strict-control}
\begin{split}
    &\left(\int_U a(t,X_t(\omega),u)\,\underline Q_t(\omega, du),\int_U b(t,X_t(\omega),u)\,\underline Q_t(\omega,du),\int_U f(t,X_t(\omega),u)\,\underline Q_t(\omega,du)\right)\\
    =&~\left(a(t,X_t(\omega),\bar u_t(\omega)),b(t,X_t(\omega),\bar u_t(\omega)),f(t,X_t(\omega), \bar u_t(\omega))+ \bar v_t(\omega)\right).
\end{split}
\end{equation}
Then $\alpha=(\Omega,\mathcal{F},\mathcal{F}_t,\mathbb{P},X,\bar{u},Z)$ is a strict control with smaller or equal cost.
\item If $a(t,x,u)=K_{t,x}u^2$, $b(t,x,u)=K_{t,x}'u^2$, $f(t,x,u)=K_{t,x}''u^2$, where $|K_{t,x}|,~|K_{t,x}'|,~|K_{t,x}''|\leq K$ for some positive constant $K$, then the set $K(t,x)$ is convex for each $(t,x)\in[0,T]\times\mathbb R^d$.
\end{enumerate}
\end{remark}

\subsubsection{Canonical state space and disintegration}\label{disintegration}
In what follows, we always assume that  $\Omega$ is the canonical path space, i.e.
\[
	\Omega=\widetilde{\mathcal{D}}(\mathbb{R})\times \widetilde{\mathcal{U}}(\mathbb R)\times \widetilde{\mathcal{A}}(\mathbb{R})
\]
and that the filtration $\{\mathcal{F}_t,t\in\mathbb{R}\}$ is generated by the coordinate projections $X,Q,Z$. More precisely, for each $\omega:=(x,q,z)\in\Omega$,
\[
    X(\omega)=x,\qquad Q(\omega)=q,\qquad Z(\omega)=z.
\]
%
%
and for $t\in[0,T]$, $\mathcal{F}_t:=\mathcal F^X_t\times \mathcal F^Q_t\times \mathcal F^Z_t$, where
\[
    \mathcal F^X_t=\sigma(X_s,s\leq t),\qquad \mathcal F^Q_t=\sigma(Q(S),S\in\mathcal{B}([0,t]\times U)),\qquad\mathcal F^Z_t=\sigma(Z_s,s\leq t);
\]
if $t<0$, then $\mathcal{F}_t:=\{\Omega,{\O}\}$ and if $t>T$, then $\mathcal{F}_t:=\mathcal{F}_T$.

The following argument shows that relaxed controls can be defined in terms of projection mappings. In fact, since $[0,T]$ and $U$ are compact, by the definition of $\widetilde{\mathcal U}(\mathbb R)$, each $q\in\widetilde{\mathcal U}(\mathbb R)$ allows for the disintegration $$q(dt,du)=q_t(du)dt$$
for some measurable $\mathcal P(U)$-valued function $q_t$. By \cite[Lemma 3.2]{Lacker-2015} and by definition of the space $\widetilde {\mathcal U}(\mathbb R)$ there exists a $\mathcal F^Q_t$-predictable $\mathcal P(U)$-valued process $\Pi$ such that for each $q\in\widetilde{\mathcal U}(\mathbb R)$, 
    \[
        \Pi_t(q)=q_t, ~a.e. ~t\in[0,T];\qquad \Pi_t(q)\equiv\delta_{\widetilde{u_0}},~t<0;\qquad \Pi_t(q)\equiv\delta_{\widetilde{u_T}},~t>T.
    \]
 Hence, the process $ Q^o_t:=\Pi_t\circ Q$
    is $\mathcal F_t$-predictable. As a result, for each $\omega=(x,q,z)$,
    \[
        Q(\omega)(dt,du)=q(dt,du)=q_t(du)dt=\Pi_t(q)(du)dt=\Pi_t\circ Q(\omega)(du)dt=Q^o_t(\omega)(du)dt.
    \]
This yields an adapted disintegration of $Q$ in terms of the $\{\mathcal{F}_t,t\in\mathbb{R}\}$ progressively measurable process
    \[
    	Q^ o:\mathbb{R}\times\Omega\rightarrow\mathcal{P}(U).
    \]
and hence allows us to define control rules. We notice that it is not appropriate to replace $\widetilde{\mathcal{U}}(\mathbb{R})$ in the definition of the canonical path space by the space of c\`adl\`ag $\mathcal{P}(U)$-valued functions as the definition of relaxed controls does not assume any path properties of $t \mapsto \underline{Q}_t.$
\begin{definition}\label{control-rule}
For the canonical path space $\Omega$, the canonical filtration $\{ \mathcal{F}_t, t \in \mathbb{R} \}$ and the coordinate projections $(X,Q,Z)$ introduced above, if
 $r=(\Omega,\mathcal{F},\{\mathcal{F}_t,t\in\mathbb{R}\},\mathbb{P},X,{Q}^o,Z)$ is a relaxed control in the sense of Definition \ref{relaxed-control}, then the probability measure $\mathbb{P}$ is called a {\sl control rule}. The associated cost functional is defined as
    \begin{equation*}\label{cost-with-control-rule}
        \widehat{J}(\mathbb{P}):=\widetilde{J}(r).
    \end{equation*}
\end{definition}
Let us denote by $\mathcal{R}$ the class of all the control rules for the stochastic control problem \eqref{model-stochastic-singular-control}. Clearly,
\[
	\inf\limits_{\mathbb{P}\in\mathcal{R}}\widehat{J}(\mathbb{P})\geq \inf\limits_{\textrm{relaxed control }r}\widetilde{J}(r).
\]
Conversely, for any relaxed control $r$ one can construct a control rule $\mathbb{P}\in\mathcal{R}$ such that $\widehat{J}(\mathbb{P})=\widetilde{J}(r).$ The proof is standard; it can be found in, e.g.  \cite[Proposition 2.6]{Haussmann-Suo-1995}. In other words, the optimization problems over relaxed controls and control rules are equivalent. It is hence enough to consider control rules. From now on, we let $(Q_t)_{t \in \mathbb{R}}:=(Q^o_t)_{t \in \mathbb{R}}$ for simplicity.

\begin{remark}
In \cite{Haussmann-Suo-1995} -  
with the choice of different topologies and
under suitable assumptions on the cost coefficients - it is shown that an optimal control rule exists if $g \equiv 0$. Their method allows for terminal costs only after a modification of the cost function; see \cite[Remark 2.2 and Section 4]{Haussmann-Suo-1995} for details. As a byproduct (see Corollary \ref{existence-singular-control}) of our analysis of MFGs, under the same assumptions on the coefficients as in \cite{Haussmann-Suo-1995} we establish the existence of an optimal control rule for terminal cost functions that satisfy a linear growth condition. In Section \ref{McKean-Vlasov-control} we furthermore outline a generalization of the stochastic singular control problem to problems of McKean-Vlasov-type. 
\end{remark}

\subsection{Mean field games with singular controls}\label{intro-MFG-singular}

We are now going to consider MFGs with singular controls of the form (\ref{model-MFGs-singular-control}). We again restrict ourselves to relaxed controls. Throughout the paper, for each $\mu\in\mathcal{P}_p(\widetilde{\mathcal{D}}(\mathbb{R}))$, put $\mu_t=\mu\circ\pi^{-1}_t$, where $\pi_t:x\in\widetilde{\mathcal{D}}(\mathbb{R})\rightarrow x_t$. The first step of solving mean field games is to solve the representative agent's optimal control problem
\begin{equation*}\label{model-stochastic-singular-control2}
        \left\{
        \begin{array}{ll}
	\inf_{u,Z} E\left[\int_0^Tf(t,X_t,\mu_t, u_t)\,dt+g(X_T,\mu_T)+\int_0^Th(t)\,dZ_t\right]\\
	\mbox{subject to}\\
	dX_t=b(t,X_t,\mu_t,u_t)\,dt+\sigma(t,X_t,\mu_t,u_t)\,dW_t+c(t)\,dZ_t,\\
        X_{0-}=0
        \end{array}
        \right.
    \end{equation*}
for any {\it fixed} mean field measure $\mu\in\mathcal{P}_p(\widetilde{\mathcal{D}}(\mathbb{R}))$. The canonical path space for MFGs with singular controls is
\[
	\Omega:=\widetilde{\mathcal{D}}(\mathbb{R})\times\widetilde{\mathcal{U}}(\mathbb R)\times\widetilde{\mathcal{A}}(\mathbb{R}).
\]	
We assume that the spaces $\widetilde{\mathcal{D}}(\mathbb{R})$ and $\widetilde{\mathcal{A}}(\mathbb{R})$ are endowed with the $M_1$ topology. We define a metric on the space ${\mathcal{U}}(\mathbb R)$ induced by the Wasserstein distance on compact time intervals by
 \begin{equation}\label{metric-on-mathcal-U}
 \begin{split}
    d_{{\mathcal{U}}(\mathbb R)}(q^1,q^2)&:=\mathcal{W}_{p,[0,T]\times U}\left(\frac{q^1}{T},\frac{q^2}{T}\right) \\
    & \quad +\sum_{n=0}^{\infty}  \frac{1}{2^{n+1}}\left\{\mathcal W_{p,[-(n+1),-n]\times U}(q^1,q^2)+
    \mathcal W_{p,[T+n,T+n+1]\times U}(q^1,q^2) \right\}.
    \end{split}
 \end{equation}
The space $\widetilde{\mathcal U}(\mathbb R)$ endowed with the metric $d_{\widetilde{\mathcal U}(\mathbb R)}:=d_{\mathcal U(\mathbb R)}$ is compact.
Furthermore, it is well known \cite[Chapter 3]{Whitt-2002} that the spaces $\widetilde{\mathcal{D}}(\mathbb{R})$ and $\widetilde{\mathcal{A}}(\mathbb{R})$ are Polish spaces when endowed with the $M_1$ topology, and that the $\sigma$-algebras on $\widetilde{\mathcal{D}}(\mathbb{R})$ and $\widetilde{\mathcal{A}}(\mathbb{R})$ coincide with the Kolmogorov $\sigma$-algebras generated by the coordinate projections.

\begin{definition}\label{control-rule-mu}
A probability measure $\mathbb{P}$ is called a control rule with respect to $\mu\in\mathcal{P}_p(\widetilde{\mathcal{D}}(\mathbb{R}))$ if
\begin{itemize}
        \item[1.] $(\Omega,\mathcal{F},\{\mathcal{F}_t,t\in\mathbb{R}\},\mathbb{P})$ is the canonical probability space and $(X,Q,Z)$ are the coordinate projections;
        \item[2.] 
            for each $\phi\in \mathcal{C}^2_b(\mathbb{R}^d)$, $\mathcal{M}^{\mu,\phi}$ is a well defined $\mathbb{P}$ continuous martingale, where
            \begin{equation}\label{martingale-problem-MFGs}
            \begin{split}
            \mathcal{M}^{\mu,\phi}_t:=&~\phi(X_t)-\int_0^t\int_U\mathcal{L}\phi(s,X_s,\mu_s,u)\,Q_s(du)ds-\int_0^t(\partial_x\phi(X_{s-}))^{\top}c(s)dZ_s\\
            &-\sum_{0\leq s\leq t}\big(\phi(X_s)-\phi(X_{s-})-(\partial_x\phi(X_{s-}))^{\top}\triangle X_s\big),\qquad t\in[0,T]
             \end{split}
             \end{equation}
             with $\mathcal{L}\phi(t,x,\nu,u):=\frac{1}{2}\sum_{ij}a_{ij}(t,x,\nu,u)\frac{\partial^2\phi(x)}{\partial_{x_i}\partial_{x_j}}+\sum_ib_i(t,x,\nu,u)\partial_{x_i}\phi(x)$ and $a(t,x,\nu,u)=\sigma\sigma^{\top}(t,x,\nu,u)$, for each $(t,x,\nu,u)\in[0,T]\times\mathbb{R}^d\times\mathcal{P}_p(\mathbb{R}^d)\times  U$.
    \end{itemize}
\end{definition}

For a fixed measure $\mu\in\mathcal{P}_p(\widetilde{\mathcal{D}}(\mathbb{R}))$, the corresponding set of control rules is denoted by $\mathcal{R}(\mu)$, the cost functional corresponding to a control rule $\mathbb{P}\in\mathcal{R}(\mu)$ is
    \[
        J(\mu,\mathbb{P})=E^{\mathbb{P}}\left[\int_0^T\int_Uf(t,X_t,\mu_t,u)\,Q_t(du)dt+\int_0^Th(t)\,dZ_t+g(X_T,\mu_T)\right],
    \]
and the (possibly empty) set of optimal control rules is denoted by
\[
	\mathcal{R}^*(\mu):=\textrm{argmin}_{\mathbb{P\in\mathcal{R}(\mu)}}J(\mu,\mathbb{P}).
\]
If a probability measure $\mathbb{P}$ satisfies the fixed point property
\[	
	\mathbb{P}\in \mathcal{R}^*(\mathbb{P}\circ X^{-1}),
\]	
then	we call $\mathbb{P}\circ X^{-1}$ or $\mathbb{P}$ or the associated tuple $(\Omega,\mathcal{F},\mathcal{F}_t,\mathbb{P},X,Q,Z)$ a {\sl relaxed solution} to the MFG with singular controls \eqref{model-MFGs-singular-control}. Moreover, if $\mathbb{P}\in \mathcal{R}^*(\mathbb{P}\circ X^{-1})$ and $\mathbb{P}(Q(dt,du)=\delta_{\bar{u}_t}(du)dt)=1$ for some progressively measurable process $\bar{u}$, then we call $\mathbb{P}\circ X^{-1}$ or $\mathbb{P}$ or the associated tuple $(\Omega,\mathcal{F},\mathcal{F}_t,\mathbb{P},X,\bar{u},Z)$ a {\sl strict solution}.

The following theorem gives sufficient conditions for the existence of a relaxed solution to our MFG. The proof is given in Section \ref{proof}.

\begin{theorem}\label{existence}
For some $\bar{p}>p\geq 1$, we assume that the following conditions are satisfied:
   \begin{itemize}
        \item[$\mathcal{A}_1$. ]There exists a positive constant $C_1$ such that $|b|\leq C_1$ and $|a|\leq C_1$; $b$ and $\sigma$ are measurable in $t\in[0,T]$ and continuous in $(x,\nu,u)\in\mathbb{R}^d\times\mathcal{P}_p(\mathbb{R}^d)\times U$; moreover, $b$ and $\sigma$ are Lipschitz continuous in $x\in\mathbb{R}^d$, uniformly in $(t,\nu,u)\in[0,T]\times\mathcal{P}_p(\mathbb{R}^d)\times U$.
        \item[$\mathcal{A}_2$. ] The functions $f$ and $g$ are measurable in $t\in[0,T]$ and are continuous with respect to $(x,\nu,u)\in\mathbb{R}^d\times\mathcal{P}_p(\mathbb{R}^d)\times U$.
        \item[$\mathcal{A}_3$. ] For each $(t,x,\nu,u)\in [0,T]\times\mathbb{R}^d\times\mathcal{P}_p(\mathbb{R}^d)\times U$, there exist strictly positive constants $C_2$, $C_3$ and a positive constant $C_4$ such that
            \[
                 -C_2\left(1-|x|^{\bar{p}}+\int_{\mathbb{R}^d}|x|^p\,\nu(dx)\right)\leq g(x,\nu)\leq C_3\left(1+|x|^{\bar{p}}+\int_{\mathbb{R}^d}|x|^p\,\nu(dx)\right),
            \]
            and
            \[
                 |f(t,x,\nu,u)|\leq C_4\left(1+|x|^p+|u|^p+\int_{\mathbb{R}^d}|x|^p\,\nu(dx)\right).
            \]
        \item[$\mathcal{A}_4$. ] The functions $c$ and $h$ are continuous and $c$ is strictly positive.
        \item[$\mathcal{A}_5$. ] The functions $b$, $\sigma$ and $f$ are locally Lipschitz continuous with $\mu$ uniformly in $(t,x,u)$, i.e., for $\varphi=b,~\sigma~\textrm{and}~f$, there exists $C_5>0$ such that for each $(t,x,u)\in[0,T]\times\mathbb R^d\times U$ and $\nu^1,\nu^2\in\mathcal P_p(\mathbb R^d)$ there holds that
                \[
                    |\varphi(t,x,\nu^1,u)-\varphi(t,x,\nu^2,u)|\leq C_5\Big(1+L(\mathcal{W}_p(\nu^1,\delta_0),\mathcal{W}_p(\nu^2,\delta_0))\Big) \mathcal{W}_p(\nu^1,\nu^2),
                \]
                where $L(\mathcal{W}_p(\nu^1,\delta_0),\mathcal{W}_p(\nu^2,\delta_0))$ is locally bounded with $\mathcal{W}_p(\nu^1,\delta_0)$ and $\mathcal{W}_p(\nu^2,\delta_0)$.
       \item[$\mathcal{A}_6$. ] $U$ is a compact metrizable space.
    \end{itemize}
Under assumptions $\mathcal{A}_1$-$\mathcal{A}_6$, there exists a relaxed solution to the MFGs with singular controls (\ref{model-MFGs-singular-control}).
\end{theorem}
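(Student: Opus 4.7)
The plan is to apply the Kakutani--Fan--Glicksberg fixed point theorem to the measure-valued best response correspondence $\mu \mapsto \{\mathbb{P}\circ X^{-1} : \mathbb{P}\in\mathcal{R}^*(\mu)\}$ over a suitable invariant compact convex domain. First I derive an a priori bound: by $\mathcal{A}_4$ the term $\int_0^T h(t)\,dZ_t$ dominates a positive multiple of $Z_T$, while $\mathcal{A}_3$ controls the terminal cost from below in terms of $|X_T|^{\bar p}$ and of $\int|x|^p\mu_T(dx)$. Comparing any optimal control rule with the rule corresponding to $Z\equiv 0$ and some fixed constant regular control, one obtains a uniform bound $E^{\mathbb P}[Z_T^{\bar p}+\sup_{t\le T}|X_t|^{\bar p}] \leq M$ that depends only on the structural constants, using $\mathcal{A}_1$ on the drift and diffusion. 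The set
\[
\mathcal{Q}_M := \Big\{\mu\in\mathcal{P}_p(\widetilde{\mathcal{D}}(\mathbb R)) : \int \sup_{t}|x_t|^{\bar p}\,\mu(dx) \leq M \Big\}
\]
is convex, and combining $\bar p > p$ with the $M_1$-tightness criterion from Appendix B, it is compact in the $p$-Wasserstein topology on $\mathcal{P}_p(\widetilde{\mathcal{D}}(\mathbb{R}))$.

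Next I verify the hypotheses of Berge's maximum theorem on $\mathcal{Q}_M$. For $\mu\in\mathcal{Q}_M$ the set $\mathcal{R}(\mu)$ is nonempty (taking $Z\equiv 0$ and a constant relaxed control gives a control rule) and compact: the marginal of $\mathbb P$ on $\widetilde{\mathcal U}(\mathbb R)$ is automatically tight by compactness of that space, the marginal on $\widetilde{\mathcal A}(\mathbb R)$ is $M_1$-tight because bounded monotone paths form an $M_1$-relatively compact family, and the marginal on $\widetilde{\mathcal D}(\mathbb R)$ is $M_1$-tight via the martingale problem combined with the bounds on $b,\sigma$ and the bound on $Z_T$; closedness follows because the martingale problem passes to the $M_1$-limit, the stochastic integral $\int_0^\cdot(\partial_x\phi(X_{s-}))^\top c(s)\,dZ_s$ being $M_1$-continuous in $(X,Z)$ by Whitt's continuity theorem for integrals against monotone functions (Appendix B). Upper hemicontinuity of $\mu\mapsto\mathcal{R}(\mu)$ follows the same pattern: if $\mu^n\to\mu$ in $\mathcal{Q}_M$ and $\mathbb P^n\in\mathcal{R}(\mu^n)$ converges weakly to $\mathbb P$, then $\mathcal{A}_5$ and the uniform $\bar p$-integrability ensure that the drift and diffusion integrals in \eqref{martingale-problem-MFGs} converge along the limit, so $\mathbb P\in\mathcal{R}(\mu)$.

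The decisive point is joint continuity of the cost $J(\mu,\mathbb P)$ on the graph of $\mathcal{R}$. The running cost is handled by continuity of $f$ in $(x,\nu,u)$ together with uniform $\bar p$-integrability of $|X_t|^p+|u|^p$, which upgrades pointwise convergence to convergence of the integral. The singular term $\int_0^T h(t)\,dZ_t$ is jointly continuous in $Z$ for the $M_1$ topology because $h$ is continuous and $Z$ is monotone: the integral depends only on the completed graph of $Z$, which is precisely what the $M_1$ parametric representation encodes. The terminal cost $g(X_T,\mu_T)$ is continuous at continuity-of-$T$ points (which can be arranged by the identification $\widetilde{\mathcal D}_{0,T}(\mathbb R)$, where paths are constant past $T$) with uniform integrability supplied by $\mathcal{A}_3$. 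Berge's theorem then yields that $\mathcal{R}^*(\mu)$ is nonempty and compact and that $\mu\mapsto\mathcal{R}^*(\mu)$ is upper hemicontinuous; linearity of the martingale problem and the cost in $\mathbb P$ gives convexity.

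Finally, the set-valued map $\Phi:\mathcal{Q}_M\to 2^{\mathcal{Q}_M}$ defined by $\Phi(\mu)=\{\mathbb P\circ X^{-1} : \mathbb P\in\mathcal{R}^*(\mu)\}$ takes values in $\mathcal{Q}_M$ by construction of $M$, has nonempty convex compact values, and is upper hemicontinuous because the push-forward under the continuous coordinate projection $X$ preserves these properties. The Kakutani--Fan--Glicksberg theorem then yields a fixed point $\mu^*=\mathbb P^*\circ X^{-1}$ with $\mathbb P^*\in\mathcal{R}^*(\mu^*)$, the desired relaxed solution. The principal obstacle is establishing the joint $M_1$-continuity of the cost and the $M_1$-stability of the martingale problem in the presence of the singular integrals $\int c\,dZ$ and $\int h\,dZ$: this is exactly where the $M_1$ topology is essential, since in $J_1$ the required compactness of bounded monotone controls fails, while in the Meyer--Zheng topology one cannot control pointwise values needed for the martingale problem formulation.
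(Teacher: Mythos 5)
Your overall architecture (relaxed controls, Berge's maximum theorem for $\mathcal{R}^*$, Kakutani--Fan--Glicksberg for the fixed point, all in the $M_1$ topology) coincides with the paper's treatment of the \emph{finite-fuel} case. However, you try to handle the unconstrained problem in a single pass, and this is where the argument has genuine gaps.

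First, restricting the \emph{domain} of measures to $\mathcal{Q}_M$ does nothing to the \emph{feasible set}: for $\mu\in\mathcal{Q}_M$ the correspondence $\mathcal{R}(\mu)$ still contains control rules with arbitrarily large (indeed, non-integrable) $Z_T$. Your claim that the $\widetilde{\mathcal{A}}(\mathbb{R})$-marginals are tight ``because bounded monotone paths form an $M_1$-relatively compact family'' presupposes exactly the fuel bound you have not imposed; without it $\mathcal{R}(\mu)$ is not compact and Berge's theorem does not apply. Second, even if you cut down the feasible set using the a priori bound $E^{\mathbb{P}}|Z_T|^{\bar p}\le M$, the joint continuity of $J$ fails: the terminal cost satisfies only $|g(x_T,\mu_T)|\le C(1+|x_T|^{\bar p})$, and to pass $E^{\mathbb{P}^n}g(X_T,\mu_T)\to E^{\mathbb{P}}g(X_T,\mu_T)$ along a weakly convergent sequence you need uniform integrability of $|X_T|^{\bar p}$, i.e.\ a uniform bound on a moment of order \emph{strictly greater} than $\bar p$. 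The comparison argument yields a bound of order exactly $\bar p$ and no more, so this uniform integrability is unavailable; the paper's Remark 3.5 makes precisely this point, observing that without the fuel constraint $J$ is merely lower semicontinuous. Lower semicontinuity alone gives existence of minimizers over a compact set but not upper hemicontinuity of $\mathcal{R}^*$, which your Kakutani step requires. This is exactly why the paper argues in two stages: under the constraint $Z_T\le m$ all moments of $X$ are bounded, $J$ is jointly continuous, and Berge/Kakutani go through; the general case is then obtained by letting $m\to\infty$, using the uniform $\bar p$-moment bound on the optimizers $\mathbb{P}^{m*}$ (Lemma 3.12) for compactness, the l.s.c.\ of $J$ for one inequality, and an explicit truncation $Z^m$ of an arbitrary competitor $\mathbb{P}\in\mathcal{R}(\mu^*)$ to produce admissible rules $\mathbb{P}^m\in\mathcal{R}^m(\mu^{m*})$ with $J(\mu^{m*},\mathbb{P}^m)\to J(\mu^*,\mathbb{P})$ for the other. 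A minor further point: your a priori bound invokes positivity of $\int_0^T h(t)\,dZ_t$, but $\mathcal{A}_4$ only makes $h$ continuous; the coercivity actually comes from the lower bound $g(x,\nu)\ge -C_2+C_2|x|^{\bar p}-C_2\int|y|^p\nu(dy)$ in $\mathcal{A}_3$ combined with $c>0$, which transfers a $\bar p$-th moment bound from $X_T$ to $Z_T$.
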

\begin{remark}\label{comment-on-assumption}
A typical example where assumption $\mathcal{A}_3$ holds is
    $$g(x,\nu)=|x|^{\bar{p}}+\bar{g}(\nu),$$
where $|\bar{g}(\nu)|\leq \int_{\mathbb{R}^d}|y|^p\,\nu(dy)$. This assumption is not needed under a finite fuel constraint on the singular controls. It is needed in order to approximate MFGs with singular controls by MFGs with a finite fuel constraint. The assumption that $c>0$ is also only needed when passing from finite fuel constrained to unconstrained problems, see Lemma \ref{boundedness-expectation-pm-zt-pbar}. Assumption $\mathcal{A}_5$ is needed in order to prove the continuity of the cost function and the correspondence ${\cal R}$ in $\mu$. A typical example for $\mathcal{A}_5$ is $\int|x|^p\nu(dx)$ or $\int|x|^p\nu(dx)\wedge K$ for some fixed constant $K$ if  boundedness is required.
\end{remark}

\begin{remark}\label{existence-strict-MFG}  If we assume for each $(t,x,\nu)\in[0,T]\times\mathbb{R}^d\times\mathcal{P}_p(\mathbb{R}^d)$, $K(t,x,\nu)$ is convex, where
    \[
        K(t,x,\nu)=\{(a(t,x,\nu,u),b(t,x,\nu,u),e):~e\geq f(t,x,\nu,u),~u\in U\},
    \]
     a strict solution to our MFG can be constructed from a relaxed solution. Let $r^*=(\Omega,\mathcal{F},\mathcal{F}_t,\mathbb{P}^*,X,Q,Z)$ is a relaxed solution to MFG. Let $a^*(t,x,u)=a(t,x,\mu^*_t,u)$, $b^*(t,x,u)=b(t,x,\mu^*_t,u)$ and $f^*(t,x,u)=f(t,x,\mu^*_t,u)$, where $\mu^*=\mathbb{P}^*\circ X^{-1}$. Similar to Remark \ref{relaxed-to-strict}, there exist $U$-valued process $\bar{u}$ and $\mathbb{R}^+$-valued process $\bar{v}$ such that \eqref{relaxed-control-to-strict-control} holds with $a,b,f$ replaced by $a^*,b^*,f^*$, respectively. Define
     \[
        \alpha^*=(\Omega,\mathcal{F},\mathcal{F}_t,\mathbb{Q}^*,X,\bar{u},Z),
     \]
     where $\mathbb Q^*=\mathbb P^*\circ (X,\delta_{\bar u_t}(du)dt,Z)^{-1}$. 
     Then, $\alpha^*$ is a strict solution. The point is that the marginal distribution $\mu^*$ does not change when passing from $r^*$ to $\alpha^*$.
\end{remark}

\section{Proof of the main result}\label{proof}
The proof of Theorem \ref{existence} is split into two parts. In Section \ref{section-finite-fuel} we prove the existence of a solution to our MFG under a finite fuel constraint on the singular controls. The general case is established in Section \ref{Approximation} using an approximation argument.


\subsection{Existence under a finite fuel constraint}\label{section-finite-fuel}
In this section, we prove the existence of a relaxed solution to our MFG under a finite fuel constraint. That is, unless stated otherwise, we restrict the set of admissible singular controls to the set
\[
	\widetilde{\mathcal{A}}^m(\mathbb{R}):=\{z\in\widetilde{\mathcal{A}}(\mathbb{R}):z_T\leq m\}, 
\]
for some $m>0$. By Corollary \ref{compactness-finite-fuel}, the set $\widetilde{\mathcal{A}}^m(\mathbb{R})$ is $(\widetilde{\mathcal{D}}(\mathbb{R}),d_{M_1})$ compact.

We start with the following auxiliary result on the tightness of the distributions of the solutions to a certain class of SDEs. {The proof uses the definition of the distance $|x-[y,z]|$ of a point $x$ to a line segment} $[y,z]$ and the modified strong $M_1$ oscillation function $\widetilde{w}_s$ introduced in \eqref{strong-M1-oscillation-fun} and \eqref{modified-oscillation}, respectively.
\begin{proposition}\label{relative-compactness-general-result}
For each $n\in\mathbb{N}$, on a probability space $(\Omega^n,\mathcal{F}^n,\mathbb{P}^n)$, let $X^n$ satisfy the following SDE on $[0,T]$:
\begin{equation}
    dX_t^n={b}_n(t)\,dt+\,dM^n_t+d{c}_n(t),
\end{equation}
where the random coefficients ${b}_n$ is measurable and bounded uniformly in $n$, $M^n$ is a continuous martingale with uniformly bounded and absolutely continuous quadratic variation, and ${c}_n$ is monotone and c\`adl\`ag in time a.s.~and $\sup_n E^{\mathbb{P}^n}(|c_n(0)|\vee|c_n(T)|)^{\bar{p}}<\infty$. Moreover, assume that $X^n_t=0$ if $t<0$ and $X^n_t=X^n_T$ if $t>T$. Then,  the sequence $\{\mathbb{P}^n\circ (X^n)^{-1}\}_{n\geq 1}$ is relatively compact as a sequence in $\mathcal{W}_{p,(\widetilde{\mathcal D}(\mathbb{R}),d_{M_1})}$.
\end{proposition}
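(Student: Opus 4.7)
The plan is to exploit the additive structure of the SDE and the fact that the $M_1$ oscillation vanishes on monotone paths. I would write $X^n = A^n + M^n + C^n$ on $[0,T]$, where $A^n_t = \int_0^t b_n(s)\,ds$ is absolutely continuous, $M^n$ is the given continuous martingale term, and $C^n_t = c_n(t) - c_n(0-)$ is monotone c\`adl\`ag, each summand extended to $\mathbb{R}$ consistently with the conventions $X^n_t = 0$ for $t<0$ and $X^n_t = X^n_T$ for $t>T$. The first step is to establish a uniform $\bar p$-moment bound $\sup_n E^{\mathbb{P}^n}\|X^n\|_\infty^{\bar p} < \infty$. The uniform boundedness of $b_n$ gives $\|A^n\|_\infty \leq C_1 T$ deterministically; the Burkholder-Davis-Gundy inequality combined with the uniform bound on the density of $\langle M^n\rangle$ yields $\sup_n E^{\mathbb{P}^n}\|M^n\|_\infty^{\bar p} < \infty$; and monotonicity of $c_n$ bounds $\|C^n\|_\infty$ by $|c_n(0)| \vee |c_n(T)|$, whose $\bar p$-moment is controlled uniformly in $n$ by hypothesis.

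The second step is to establish tightness of $\{\mathbb{P}^n \circ (X^n)^{-1}\}$ in the $M_1$ topology. Following the criteria recalled in Appendix B and in \cite{Whitt-2002}, it suffices to verify compact containment of the marginals at a dense set of times, which is immediate from the moment bound above, together with the vanishing oscillation condition $\lim_{\delta \downarrow 0} \limsup_n \mathbb{P}^n(\widetilde w_s(X^n,\delta) \geq \eta) = 0$ for each $\eta > 0$. The crux is that $C^n$ is monotone, so for any $t_1 \leq t \leq t_2$ the value $C^n(t)$ lies on the segment $[C^n(t_1), C^n(t_2)]$, i.e.\ $C^n(t) = \lambda C^n(t_1) + (1-\lambda) C^n(t_2)$ for some $\lambda \in [0,1]$. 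Writing $X^n = C^n + Y^n$ with $Y^n := A^n + M^n$ continuous, the distance from $X^n(t)$ to the segment $[X^n(t_1), X^n(t_2)]$ is bounded by
\[
|Y^n(t) - \lambda Y^n(t_1) - (1-\lambda) Y^n(t_2)| \leq 2\,w(Y^n, t_2 - t_1),
\]
where $w$ denotes the ordinary modulus of continuity. Taking the supremum over admissible $(t_1,t,t_2)$ with $t_2 - t_1 \leq \delta$ gives $\widetilde w_s(X^n, \delta) \leq 2\,w(Y^n, \delta) \leq 2 C_1 \delta + 2\,w(M^n, \delta)$. A standard chaining argument using BDG and the uniform boundedness of the density of $\langle M^n\rangle$ yields $\sup_n E^{\mathbb{P}^n}[w(M^n, \delta)^q] \to 0$ as $\delta \downarrow 0$ for some $q \geq 1$, closing the oscillation estimate.

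Finally, combining tightness in the $M_1$ topology with the uniform $\bar p$-moment bound for $\bar p > p$ yields uniform integrability of $\|X^n\|_\infty^p$, and hence the required relative compactness of $\{\mathbb{P}^n \circ (X^n)^{-1}\}$ in $\mathcal{W}_{p,(\widetilde{\mathcal{D}}(\mathbb{R}), d_{M_1})}$ by the standard characterization of Wasserstein convergence as narrow convergence plus uniform integrability of the $p$-th power of the distance.

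The main obstacle I anticipate is the careful handling of the oscillation near the boundary points $t=0$ and $t=T$: after the extension to $\mathbb{R}$, the path $X^n$ may carry a jump at $0$ (inherited from a jump of $c_n$ at $0$ or from the convention $X^n_{0-}=0$), and analogously at $T$, so the convex combination argument must be checked against the precise definition \eqref{modified-oscillation} of $\widetilde w_s$ once one admits $t_1 < 0$ or $t_2 > T$ in the relevant supremum. This is precisely the reason the paper replaces the standard $M_1$ oscillation \eqref{strong-M1-oscillation-fun} by its modification $\widetilde w_s$ tailored to the extension of paths from $[0,T]$ to the whole real line.
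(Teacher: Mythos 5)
Your proposal is correct and follows essentially the same route as the paper: the additive decomposition into drift, continuous martingale and monotone singular part, the observation that monotonicity makes the singular part contribute nothing to the modified $M_1$ oscillation $\widetilde w_s$ (including the boundary term involving $[0,x_t]$), the tightness criterion of Proposition \ref{modified-tightness-criterion}, and the upgrade from tightness to $\mathcal W_p$-relative compactness via the uniform $\bar p$-moment bound. The only cosmetic difference is that you control the oscillation of the continuous part through a modulus-of-continuity/chaining estimate, whereas the paper applies Markov's inequality directly to $\sup|\int_{t_1}^{t_2} b_n\,ds + M^n_{t_2}-M^n_{t_1}|$; the two are interchangeable.
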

\begin{proof}
By the uniform boundedness of ${b}_n$, $E^{\mathbb{P}^n}(|c_n(0)|\vee|c_n(T)|)^{\bar{p}}$ and the quadratic variation of $M^n$, there exists a constant $C$ that is independent of $n$, such that
\begin{equation}\label{boundedness-expectation-any-order}
    E^{\mathbb{P}^n}\sup_{0\leq t\leq T}|X^n_t|^{\bar{p}}\leq C<\infty.
\end{equation}
By \cite[Definition 6.8(3)]{Villani-2009} it is thus sufficient to check the tightness of $\{\mathbb{P}^n\circ (X^n)^{-1}\}_{n\geq 1}$. This can be achieved by applying Proposition \ref{modified-tightness-criterion}. Indeed, the condition (\ref{tightness-criterion-uniform-boundedness}) holds, due to (\ref{boundedness-expectation-any-order}). Hence, one only needs to check that for each $\epsilon>0$ and $\eta>0$, there exists $\delta>0$ such that
    \[
        \sup_n\mathbb{P}^n(\widetilde{w}_s({X}^n,\delta)\geq \eta)<\epsilon.
    \]
To this end, we first notice that for each $t$ and $t_1,~t_2,~t_3$ satisfying $0\vee(t-\delta)\leq t_1<t_2<t_3\leq (t+\delta)\wedge T$, the monotonicity of $c_n$ implies
   \begin{equation}
   \begin{split}
      &|{X}^n_{t_2}-[{X}^n_{t_1},{X}^n_{t_3}]|\nonumber\\
                                   \leq &~ \left|\int_{t_1}^{t_2}{b}_n(s)\,ds+{M}^n_{t_2}-M^n_{t_1}\right|+\left|\int_{t_2}^{t_3}{b}_n(s)\,ds+{M}^n_{t_3}-M^n_{t_2}\right|\\
                                   &~+\inf_{{0}\leq\lambda\leq 1}\left|{c}_n({t_2})-\lambda c_n(t_1)-(1-\lambda)c_n(t_3)\right|\nonumber\\
                                   =& \left|\int_{t_1}^{t_2}{b}_n(s)\,ds+{M}^n_{t_2}-M^n_{t_1}\right|+\left|\int_{t_2}^{t_3}{b}_n(s)\,ds+{M}^n_{t_3}-M^n_{t_2}\right|.
   \end{split}
   \end{equation}
Similarly, for $t_1$ and $t_2$ satisfying $0\leq t_1<t_2\leq\delta$,
    \[
|X^n_{t_1}-[0,X^n_{t_2}]|\leq       \left|\int_{t_1}^{t_2}{b}_n(s)\,ds+{M}^n_{t_2}-M^n_{t_1}\right|.
    \]
Therefore,
  \begin{equation*}
      \widetilde{w}_s({X},\delta)
      \leq~ 3\sup_{ t}\sup_{ t_1, t_2}\left|\int_{t_1}^{t_2}{b}_n(s)\,ds+{M}^n_{t_2}-M^n_{t_1}\right|,
  \end{equation*}
where the first supremum extends over $0\leq t\leq T$ and the second one extends over $0\vee (t-\delta)\leq t_1\leq t_2\leq T\wedge (t+\delta)$.
By the Markov inequality and the boundedness of ${b}_n$ and the quadratic variation, this yields
\begin{equation}\label{result-from-Markov-inequality}
    \mathbb{P}^n(\widetilde{w}_s(X^n,\delta)\geq\eta)\leq\frac{k(\delta)}{\eta},
\end{equation}
for some positive function $k(\delta)$ that is independent of $n$ and $m$ with $\lim_{\delta\rightarrow 0}k(\delta)=0$.
\end{proof}

The next result shows that the class of all possible control rules is relatively compact. In a subsequent step this will allow us to apply Berge's maximum theorem.

\begin{lemma}\label{relative-compactness-union-control-rule}
Under assumptions $\mathcal{A}_1$, $\mathcal{A}_4$ and $\mathcal{A}_6$, the set $\bigcup_{\mu\in\mathcal{P}_p(\widetilde{\mathcal{D}}(\mathbb{R}))}\mathcal{R}(\mu)$ is relatively compact in $\mathcal{W}_{p}$.
\end{lemma}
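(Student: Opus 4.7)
The plan is as follows. Given any sequence $\{\mathbb{P}^n\}_{n\geq 1}$ in $\bigcup_{\mu}\mathcal{R}(\mu)$, write $\mathbb{P}^n\in\mathcal{R}(\mu^n)$ for some $\mu^n\in\mathcal{P}_p(\widetilde{\mathcal{D}}(\mathbb{R}))$. Because $\Omega=\widetilde{\mathcal{D}}(\mathbb{R})\times\widetilde{\mathcal{U}}(\mathbb{R})\times\widetilde{\mathcal{A}}^m(\mathbb{R})$ is a product of Polish spaces, I would proceed marginal by marginal: prove relative compactness of each of the three projected laws in the relevant $\mathcal{W}_p$-topology, deduce tightness of the joint laws from tightness of marginals, and finally upgrade to $\mathcal{W}_p$-relative compactness on the product via a uniform $p$-moment bound.

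The $Q$- and $Z$-marginals require no real work. The space $\widetilde{\mathcal{U}}(\mathbb{R})$ is compact by the construction following (\ref{metric-on-mathcal-U}), and under the standing finite fuel constraint $\widetilde{\mathcal{A}}^m(\mathbb{R})$ is $(\widetilde{\mathcal{D}}(\mathbb{R}),d_{M_1})$-compact by Corollary~\ref{compactness-finite-fuel}. On compact metric spaces the metric is bounded, so tightness is automatic and weak convergence coincides with $\mathcal{W}_p$-convergence on these two factors.

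The heart of the proof is the $X$-marginal, which I would reduce to Proposition~\ref{relative-compactness-general-result}. To convert the martingale problem (\ref{martingale-problem-MFGs}) into the semimartingale form required there, I would localize by a sequence $\phi_N\in\mathcal{C}_b^2$ that agrees with a chosen component of the identity on $\{|x|\leq N\}$; since the jump correction in (\ref{martingale-problem-MFGs}) vanishes for affine $\phi$, a standard localization argument yields componentwise
\begin{equation*}
X_t=\int_0^t\!\int_U b(s,X_s,\mu^n_s,u)\,Q_s(du)\,ds+\int_0^t c(s)\,dZ_s+\mathcal{M}^n_t,
\end{equation*}
with $\mathcal{M}^n$ a continuous $\mathbb{P}^n$-martingale whose quadratic-variation density is $\int_U a(s,X_s,\mu^n_s,u)\,Q_s(du)$. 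Assumption $\mathcal{A}_1$ bounds both the drift and this density by $C_1$ uniformly in $n$ and $\mu^n$, while Assumption $\mathcal{A}_4$ together with the finite fuel constraint makes $\int_0^\cdot c(s)\,dZ_s$ nondecreasing with terminal value bounded a.s.~by $m\,\|c\|_{\infty,[0,T]}$; in particular its $\bar p$-moment is trivially uniform in $n$. Proposition~\ref{relative-compactness-general-result} then delivers relative compactness of $\{\mathbb{P}^n\circ X^{-1}\}$ in $\mathcal{W}_{p,(\widetilde{\mathcal{D}}(\mathbb{R}),d_{M_1})}$.

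The step I expect to be the main obstacle is the semimartingale extraction: Proposition~\ref{relative-compactness-general-result} presupposes that $X$ is already written as drift plus continuous martingale plus monotone part, whereas the MFG framework only supplies the martingale problem, and one must verify that the bounds on drift and quadratic-variation density are uniform not only in $n$ but also in the exogenous measure $\mu^n$; the $\mu$-free growth in $\mathcal{A}_1$ and the positivity and continuity of $c$ in $\mathcal{A}_4$ are what make this possible. Once this is in hand, the estimate (\ref{boundedness-expectation-any-order}) together with $\bar p>p$ upgrades weak tightness on the $X$-factor to $\mathcal{W}_p$-relative compactness, and the compactness of the other two factors completes the argument on the full product $\Omega$.
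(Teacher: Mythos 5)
Your proposal is correct and follows the same overall route as the paper: reduce to relative compactness of the three marginal laws, dispose of the $Q$- and $Z$-marginals by compactness of $\widetilde{\mathcal{U}}(\mathbb{R})$ and $\widetilde{\mathcal{A}}^m(\mathbb{R})$ (Corollary \ref{compactness-finite-fuel}), and reduce the $X$-marginal to Proposition \ref{relative-compactness-general-result}, upgrading tightness to $\mathcal{W}_p$-relative compactness via the uniform $\bar p$-moment bound and \cite[Definition 6.8(3)]{Villani-2009}. The one step you do differently is the extraction of the semimartingale decomposition from the martingale problem: the paper invokes Proposition \ref{result-from-Karoui} (the El Karoui--M\'el\'eard martingale-measure representation) to pass to an extension of the probability space on which $X^n$ solves the SDE driven by a martingale measure with intensity $Q^n$, and then feeds that representation into Proposition \ref{relative-compactness-general-result}; you instead localize the martingale problem directly on the canonical space with truncations $\phi_N$ of the coordinate functions. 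Both are legitimate. Your route avoids enlarging the probability space, but to fully match the hypotheses of Proposition \ref{relative-compactness-general-result} you must also run the localization with $\phi(x)=x_ix_j$ to identify $\langle \mathcal{M}^n\rangle$ as $\int_0^\cdot\int_U a\,Q_s(du)\,ds$ (hence absolutely continuous and bounded by $C_1$ uniformly in $n$ and $\mu^n$), and you need a word about jumps of $X$ that exit $\{|x|\le N\}$ before $\phi_N$ ceases to be affine --- handled by stopping, since under the finite fuel constraint and $\mathcal{A}_1$ all moments of $\sup_t|X_t|$ are uniformly bounded. The paper's route is shorter here only because Proposition \ref{result-from-Karoui} is already set up and reused in several later proofs (e.g.\ Proposition \ref{admissibility-limit-P-proposition}).
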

\begin{proof}
Let $\{\mu^n\}_{n\geq 1}$ be any sequence in $\mathcal{P}_p(\widetilde{\mathcal{D}}(\mathbb{R}))$ and $\mathbb{P}^n\in \mathcal{R}(\mu^n), n\geq 1$. It is sufficient to show that $\{\mathbb{P}^n\circ X^{-1}\}_{ n\geq 1}$, $\{\mathbb{P}^n\circ Q^{-1}\}_{ n\geq 1}$ and $\{\mathbb{P}^n\circ Z^{-1}\}_{n\geq 1}$ are relatively compact. Since $U$ and $\widetilde{\mathcal{A}}^m(\mathbb{R})$ are compact by assumption and Corollary \ref{compactness-finite-fuel}, respectively, $\{\mathbb{P}^n\circ Q^{-1}\}_{ n\geq 1}$ and $\{\mathbb{P}^n\circ Z^{-1}\}_{ n\geq 1}$ are tight. Since $\widetilde{\mathcal{U}}(\mathbb R)$ and $\widetilde{\mathcal{A}}^m(\mathbb{R})$ are compact, these sequences are relatively compact in the topology induced by Wasserstein metric; see \cite[Definition 6.8(3)]{Villani-2009}.

It remains to prove the relative compactness of $\{\mathbb{P}^n\circ X^{-1}\}_{n\geq 1}$. Since $\mathbb{P}^n$ is a control rule associated with the measure $\mu^n$, for any $n$, it follows from Proposition \ref{result-from-Karoui} that there exist extensions $(\bar{\Omega},\bar{\mathcal{F}},\{\bar{\mathcal{F}}_t,t\in\mathbb R\},\mathbb{Q}^n)$ of the canonical path spaces and processes $({X}^n,{Q}^n,{Z}^n,M^n)$ defined on it, such that
    \[
        d{X}^n_t=\int_U b(t,{X}^n_t,\mu^n_t,u)\,{Q}^n_t(du)dt+\int_U\sigma(t,{X}^n_t,\mu^n_t,u)\,M^n(du,dt)+c(t)\,d{Z}^n_t
    \]
    and
    \[
        \mathbb{P}^n=\mathbb{P}^n\circ (X,Q,Z)^{-1}=\mathbb{Q}^n\circ({X}^n,{Q}^n,{Z}^n)^{-1},
    \]
where $M^n$ is a martingale measure on $(\bar{\Omega},\bar{\mathcal{F}},\{\bar{\mathcal{F}}_t\in\mathbb R\},\mathbb{Q}^n)$ with intensity ${Q}^n$.
Relative compactness of $\{\mathbb{P}^n\circ X^{-1}\}_{n\geq 1}$ now reduces to relative compactness of $\{\mathbb{Q}^n\circ(X^n)^{-1}\}_{n\geq 1}$, which is a direct consequence of the preceding Proposition \ref{relative-compactness-general-result}.
\end{proof}
\begin{remark}\label{remark-on-relative-compactness}
For the above result, the assumption $c>0$ is not necessary. To see this, we decompose ${X}^n$ as
   \begin{align*}
{X}^n_{\cdot}=\int_0^{\cdot}\int_U b(t,{X}^n_t,\mu^n_t,u)\,{Q}^n_t(du)dt+\int_0^{\cdot}\int_U\sigma(t,{X}^n_t,\mu^n_t,u)\,M^n(du,dt)+\int_0^{\cdot}c^+(t)\,d{Z}^n_t-\int_0^{\cdot}c^-(t)\,d{Z}^n_t,
   \end{align*}
where $c^+$ and $c^-$ are the positive and negative parts of $c$, respectively. By the boundedness of $b$ and $\sigma$, we see that
\[
    E^{\mathbb{Q}^n}|K^n_t-K^n_s|^4\leq C|t-s|^2,
\]
where
 $$K^n_{\cdot}:=\int_0^{\cdot}\int_U b(t,{X}^n_t,\mu^n_t,u)\,{Q}^n_t(du)dt+\int_0^{\cdot}\int_U\sigma(t,{X}^n_t,\mu^n_t,u)\,M^n(du,dt).$$

Kolmogorov weak tightness criterion implies that, for each $\epsilon>0$, there exists a compact set $\mathcal{K}_1\subseteq\widetilde{\mathcal{C}}(\mathbb{R})$ such that
    \[
        \inf_n\mathbb{Q}^n\left(K^n\in \mathcal{K}_1\right)\geq 1-\epsilon.
    \]
     Define
        $$\mathcal{K}_2:=\left\{\int_0^{\cdot}c^+(s)\,dz_s\in\widetilde{\mathcal{A}}(\mathbb{R}):~z_T\in\widetilde{\mathcal{A}}^m(\mathbb R)\right\}.$$
     and
         $$\mathcal{K}_3:=\left\{-\int_0^{\cdot}c^-(s)\,dz_s\in\widetilde{\mathcal{D}}(\mathbb{R}):z\in\widetilde{\mathcal{A}}^m(\mathbb{R})\right\}.$$
     Thus,
   \begin{equation}
    \begin{split}
       &~\inf_n\mathbb{P}^n\{\omega\in\Omega:X_{\cdot}(\omega)\in \mathcal{K}_1+\mathcal{K}_2+\mathcal{K}_3\}\\
      \geq &~\inf_n\mathbb{Q}^n\left\{\bar{\omega}\in\bar{\Omega}:K^n_{\cdot}\in \mathcal{K}_1,\int_0^{\cdot}c^+(s)\,d{Z}^n_s\in \mathcal{K}_2,-\int_0^{\cdot}c^-(s)\,d{Z}^n_s\in \mathcal{K}_3\right\}\\
        \geq & ~ 1-\epsilon.
    \end{split}
    \end{equation}
By Corollary \ref{compactness-finite-fuel}, $\mathcal{K}_2$ and $\mathcal{K}_3$ are $M_1$-compact subsets of $\widetilde{\mathcal{D}}(\mathbb{R})$. By Remark \ref{M1-stronger-L1}(1), $\mathcal{K}_1$ is also a $M_1$ compact subset. Note that  the elements of $\mathcal{K}_1$ do not jump and that $\int_0^{\cdot}c^+(s)\,dz_s$ and $\int_0^{\cdot}c^-(s)\,dz_s$ never jump at the same time. Thus, Proposition \ref{M1-continuity-addition} implies that $\mathcal{K}_1+\mathcal{K}_2+\mathcal{K}_3$ is a
 $M_1$-compact subset of $\widetilde{\mathcal{D}}(\mathbb{R})$.
\end{remark}
The next result states that the cost functional is continuous on the graph
\[
	\textrm{Gr}\mathcal{R} :=
	\{(\mu,\mathbb{P})\in\mathcal{P}_p(\widetilde{\mathcal{D}}(\mathbb{R}))\times\mathcal{P}_p(\Omega):~\mathbb{P}\in\mathcal{R}(\mu)\}.
\]
of the multi-function $\mathcal{R}$. This, too, will be needed to apply Berge's maximum theorem below.

\begin{lemma}\label{continuity-J-mu-P}
Suppose that $\mathcal{A}_1$-$\mathcal{A}_6$ hold. Then
$J:\textrm{Gr}\mathcal{R}\rightarrow\mathbb{R}$ is continuous.
\end{lemma}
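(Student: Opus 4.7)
The plan is to take a convergent sequence $(\mu^n, \mathbb P^n) \to (\mu, \mathbb P)$ in $\text{Gr}\,\mathcal R \subseteq \mathcal P_p(\widetilde{\mathcal D}(\mathbb R)) \times \mathcal P_p(\Omega)$ and, using that all underlying spaces are Polish ($\widetilde{\mathcal D}(\mathbb R)$ and $\widetilde{\mathcal A}(\mathbb R)$ in $M_1$, $\widetilde{\mathcal U}(\mathbb R)$ in $d_{\widetilde{\mathcal U}(\mathbb R)}$), to invoke the Skorokhod representation theorem to realise the sequence on a common probability space $(\tilde\Omega, \tilde{\mathcal F}, \tilde{\mathbb P})$ with $(X^n, Q^n, Z^n) \to (X, Q, Z)$ $\tilde{\mathbb P}$-a.s.\ in the product topology and $\mu^n \to \mu$ deterministically in $\mathcal W_p$. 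I would then split the cost into singular, terminal, and running pieces and verify pathwise convergence together with uniform integrability for each.

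For the singular piece $\int_0^T h(t)\,dZ_t$, $M_1$ convergence of the monotone sequence $Z^n \to Z$ together with the finite-fuel bound $Z_T^n \leq m$ implies weak convergence of the associated Lebesgue--Stieltjes measures $dZ^n \Rightarrow dZ$, so continuity of $h$ from $\mathcal A_4$ yields $\int h\,dZ^n \to \int h\,dZ$ a.s.; uniform integrability is immediate from the deterministic bound $|\int h\,dZ^n|\leq m\|h\|_\infty$. For the terminal piece $g(X_T, \mu_T)$, the extension convention $X_t = X_T$ for $t > T$ makes any such $t$ a continuity point of $X$, so $M_1$ convergence gives $X_T^n \to X_T$ a.s.; the same device applied to the projection $\pi_t$ for $t>T$ yields $\mu_T^n \to \mu_T$ in $\mathcal W_p(\mathbb R^d)$. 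Continuity of $g$ in $(x, \nu)$ from $\mathcal A_2$ then gives a.s.\ convergence, and the growth estimate in $\mathcal A_3$ combined with $\sup_n \tilde E[\sup_t |X_t^n|^{\bar p}] < \infty$, which follows from Proposition~\ref{relative-compactness-general-result} applied with bounded $b, \sigma$ and finite fuel, together with the $\mathcal W_p$-convergence of $\mu_T^n$, supplies uniform integrability thanks to the slack $\bar p > p$.

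The running cost is where the real work lies. I would decompose
\[
\int_0^T\!\!\int_U f(t, X_t^n, \mu_t^n, u)\, Q^n(dt, du) - \int_0^T\!\!\int_U f(t, X_t, \mu_t, u)\, Q(dt, du) = I_1^n + I_2^n,
\]
with $I_1^n$ integrating the difference $f(t, X_t^n, \mu_t^n, u) - f(t, X_t, \mu_t, u)$ against $Q^n$, and $I_2^n$ integrating $f(t, X_t, \mu_t, u)$ against $Q^n - Q$. For $I_1^n$: $M_1$ convergence gives $X_t^n \to X_t$ at the Lebesgue-a.e.\ continuity points of $X$, while $\mu^n \to \mu$ in $\mathcal W_p(\widetilde{\mathcal D}(\mathbb R))$ yields $\mu_t^n \to \mu_t$ in $\mathcal W_p(\mathbb R^d)$ for a.e.~$t$; continuity of $f$ from $\mathcal A_2$ (quantitatively controlled in $\nu$ by $\mathcal A_5$) together with compactness of $U$ gives pointwise convergence of the integrand, and the growth bound $\mathcal A_3$ paired with the uniform $p$-moment estimates delivers the dominating function needed for dominated convergence. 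For $I_2^n$: the integrand $(t, u) \mapsto f(t, X_t, \mu_t, u)$ is continuous in $u$ and continuous in $t$ off the at-most-countable jump set of $X$; since that set is Lebesgue-null and the first marginal of each $Q$ is Lebesgue, the integrand is $Q$-a.e.\ continuous on $[0,T] \times U$, and weak convergence $Q^n \to Q$ combined with the growth bound (via Portmanteau plus a truncation argument using the uniform $\bar p$-moment estimate) gives $I_2^n \to 0$ a.s. Taking expectation under $\tilde{\mathbb P}$ and reusing the uniform integrability from the terminal-cost step closes the argument. The crux is $I_2^n$: in $M_1$ (as opposed to $J_1$) joint continuity of $(X, Q) \mapsto \int f(\cdot, X_\cdot, \mu_\cdot, \cdot)\,dQ$ fails in a strong sense, and it is precisely the absolute continuity of the $t$-marginal of $Q$ that rescues the argument.
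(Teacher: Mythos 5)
Your overall plan (Skorokhod representation on $\mathbb{P}^n\to\mathbb{P}$, deterministic $\mu^n\to\mu$, then pathwise convergence plus uniform integrability for each of the three cost pieces) is a legitimate alternative to the paper's route, which instead proves uniform-in-$\omega$ continuity of $\mathcal{J}(\cdot,\omega)$ in $\mu$, pointwise continuity in $\omega$, and then upgrades to joint continuity of $J$ via the Wasserstein-convergence characterization in \cite[Definition 6.8]{Villani-2009}. Your treatment of the singular and terminal pieces is fine, including the use of the convention $x_t=x_T$ for $t>T$ to make $T$ effectively a continuity point and the $\bar p>p$ slack for uniform integrability.

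However, there is a genuine gap in the analysis of $I_2^n$. You assert that the integrand $(t,u)\mapsto f(t,X_t,\mu_t,u)$ is ``continuous in $t$ off the at-most-countable jump set of $X$'' and conclude it is $Q$-a.e.\ continuous, so that a Portmanteau-type argument applies. This is false under $\mathcal{A}_2$: $f$ is only assumed \emph{measurable} in $t$, not continuous, so $t\mapsto f(t,X_t,\mu_t,u)$ can be discontinuous at every $t$ even where $X$ is continuous. Weak convergence $Q^n\to Q$ does \emph{not}, by itself, give convergence of integrals against a bounded integrand that is merely Carath\'eodory (measurable in $t$, continuous in $u$). The ingredient you are missing is exactly what the paper uses: since $Q^n$ and $Q$ all have Lebesgue measure as first marginal, weak convergence of $Q^n$ to $Q$ is equivalent to \emph{stable} convergence by \cite[Corollary 2.9]{JM-1981}, and stable convergence \emph{is} strong enough to integrate bounded Carath\'eodory functions. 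Replacing your Portmanteau step with this stable-convergence argument closes the gap; without it, the step fails as written. The rest of your argument (including the $I_1^n$ estimate and the dominated-convergence/uniform-integrability bookkeeping) can be carried through, though you should be explicit that the a.e.-$t$ convergence $\mu^n_t\to\mu_t$ in $\mathcal{W}_p(\mathbb{R}^d)$ is obtained, as in the paper, from $\int_0^T\mathcal{W}_p(\mu^n_t,\mu_t)^p\,dt\to 0$ after passing to a subsequence.
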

\begin{proof}
For each $\mu\in\mathcal{P}_p(\widetilde{\mathcal{D}}(\mathbb{R}))$ and $\omega=(x,q,z)\in\Omega$, set
\begin{equation} \label{calJ}
	\mathcal{J}(\mu,\omega)=\int_0^T\int_Uf(t,x_t,\mu_t,u)\,q_t(du)dt+g(x_T,\mu_T)+\int_0^T h(t)\,dz_t.
\end{equation}
Thus $$J(\mu,\mathbb{P})=\int_{\Omega}\mathcal{J}(\mu,\omega)\mathbb{P}(d\omega).$$ In a first step we prove that $\mathcal{J}(\cdot,\cdot)$ is continuous in the first variable; in a second step we prove continuity and a polynomial growth condition in the second variable. The joint continuity of $J$ will be proved in the final step.

{\it Step 1: continuity in $\mu$.}
Let $\mu^n\rightarrow\mu$ in $\mathcal{W}_{p,(\widetilde{\mathcal{D}}(\mathbb{R}),d_{M_1})}$ and recall that $\mu^n_t=\mu^n\circ \pi_t^{-1}$ and $\mu_t=\mu\circ \pi_t^{-1}$, where $\pi$ is the projection on $\widetilde{\mathcal{D}}(\mathbb{R})$. We consider the first two terms on the r.h.s.~in (\ref{calJ}) separately, starting with the first one.
By assumption $\mathcal{A}_5$,
    \begin{equation}\label{f-continuity-with-mu}
    \begin{split}
    &\left|\int_0^T\int_Uf(t,x_t,\mu^n_t,u)\,q_t(du)dt-\int_0^T\int_Uf(t,x_t,\mu_t,u)\,q_t(du)dt\right|\\
    \leq& ~ C\int_0^T\left(1+L\left(\mathcal{W}_p(\mu^n_t,\delta_0),\mathcal{W}_p(\mu_t,\delta_0)\right)\right)\mathcal{W}_p(\mu^n_t,\mu_t)\,dt\\
    \leq&~C\left(\int_0^T\left(1+L\left(\mathcal{W}_p(\mu^n_t,\delta_0),\mathcal{W}_p(\mu_t,\delta_0)\right)\right)^{\frac{p}{p-1}}\,dt\right)^{1-\frac{1}{p}}\left(\int_0^T\mathcal{W}_p(\mu^n_t,\mu_t)^p\,dt\right)^{\frac{1}{p}}.
    \end{split}
    \end{equation}
The convergence $\mu^n\rightarrow\mu$ in $\mathcal{W}_{p,(\widetilde{\mathcal{D}}(\mathbb{R}),d_{M_1})}$ implies $\mu^n\rightarrow\mu$ weakly. By Skorokhod's representation theorem, there exists $\bar{X}^n$ and $\bar{X}$ defined on some probability space $(\mathbb{Q},\bar{\Omega},\bar{\mathcal{F}})$, such that
    \[
        \mu^n=\mathbb{Q}\circ(\bar{X}^n)^{-1}, \quad \mu=\mathbb{Q}\circ\bar{X}^{-1}
    \]
and
    \[
        d_{M_1}(\bar{X}^n,\bar{X})\rightarrow 0 \quad \mathbb{Q}\textrm{-}a.s.
    \]
Hence, (\ref{f-continuity-with-mu}) implies that
\begin{equation*}
    \begin{split}
    &\left|\int_0^T\int_Uf(t,x_t,\mu^n_t,u)\,q_t(du)dt-\int_0^T\int_Uf(t,x_t,\mu_t,u)\,q_t(du)dt\right|\\
    \leq& ~ C\left(\int_0^T\left(1+L\left(\mathcal{W}_p(\mathbb{Q}\circ(\bar{X}^n_t)^{-1},\delta_0),\mathcal{W}_p(\mathbb{Q}\circ \bar{X}^{-1}_t,\delta_0)\right)\right)^{\frac{p}{p-1}}\,dt\right)^{1-\frac{1}{p}}\left(E^{\mathbb{Q}}\int_0^T|\bar{X}^n_t-\bar{X}_t|^p\,dt\right)^{\frac{1}{p}}
    \end{split}
    \end{equation*}
By Remark \ref{M1-stronger-L1}, we have
\[
    \int_0^T|\bar{X}^n_t-\bar{X}_t|^p\,dt\rightarrow 0 \qquad\textrm{ a.s. }\mathbb{Q}.
\]
Moreover, we have
\[
	\int_0^T|\bar{X}^n_t-\bar{X}_t|^p\,dt
	\leq 2^pT\left(d_{M_1}(\bar{X}^n,0)^p+d_{M_1}(\bar{X},0)^p\right).
\]	
	
	On the other hand,
    \begin{equation*}
    \begin{split}
        E^{\mathbb{Q}}\left(d_{M_1}(\bar{X}^n,0)^p+d_{M_1}(\bar{X},0)^p\right)=&~\int_{\mathcal{D}[0,T]}d_{M_1}(x,0)^p\,\mu^n(dx)+\int_{\mathcal{D}[0,T]}d_{M_1}(x,0)^p\,\mu(dx)\\
        \rightarrow&~2\int_{\mathcal{D}[0,T]}d_{M_1}(x,0)^p\,\mu(dx)<\infty.
    \end{split}
    \end{equation*}
Therefore, dominated convergence yields
\begin{equation}\label{convergence-from-skorokhod}
E^{\mathbb{Q}}\int_0^T|\bar{X}^n_t-\bar{X}_t|^p\,dt\rightarrow 0.
\end{equation}
Since
$\sup_n\mathcal{W}_p(\mathbb{Q}\circ(\bar{X}^n_t)^{-1},\delta_0)<\infty$
it thus follows from the local boundedness of the function $L$ that
    \begin{equation}\label{uniform-continuity-f-mu}
        \left|\int_0^T\int_Uf(t,x_t,\mu^n_t,u)\,q_t(du)dt-\int_0^T\int_Uf(t,x_t,\mu_t,u)\,q_t(du)dt\right|\rightarrow 0,\qquad \textrm{uniformly in }\omega.
    \end{equation}

As for the second term on the r.h.s.~in \eqref{calJ} recall first that $x^n\rightarrow x$ in $M_1$ implies $x^n_t\rightarrow x_t$ for each $t \notin Disc(x)$ and $x^n_T\rightarrow x_T$. In particular, the mapping $x \mapsto \varphi(x_T)$ is continuous for any continuous real-valued function $\varphi$ on $\mathbb{R}^d$. Since any continuous positive function $\varphi$ on $\mathbb{R}^d$ that satisfies $\varphi(x)\leq C(1+|x|^p)$, also satisfies
\[
	\varphi(x_T)\leq C(1+|x_T|^p)\leq C(1+d_{M_1}(x,0)^p)
\]	
we see that
    \begin{equation*}
        \begin{split}
         \left|\int_{\mathbb{R}^d}\varphi(x)\,\mu^n_T(dx)-\int_{\mathbb{R}^d}\varphi(x)\,\mu_T(dx)\right| =\left|\int_{\widetilde{\mathcal{D}}(\mathbb{R})}\varphi(x_T)\,\mu^n(dx)-\int_{\widetilde{\mathcal{D}}(\mathbb{R})}\varphi(x_T)\,\mu(dx)\right|
         \stackrel{n \to \infty}{\longrightarrow} 0.
       \end{split}
    \end{equation*}
More generally, we obtain $\mu^n_T\rightarrow\mu_T$ from $\mu^n\rightarrow\mu$, which also implies that $g(x_T,\mu^n_T)\rightarrow g(x_T,\mu_T)$.

{\it Step 2: continuity in $\omega$}. If $\omega^n=(x^n,q^n,z^n)\rightarrow\omega=(x,q,z)$, then $x^n_T\rightarrow x_T$. In particular, $$g(x^n_T,\mu_T)\rightarrow g(x_T,\mu_T).$$ Moreover, $z^n\rightarrow z$ in $M_1$ implies $z^n_t\rightarrow z_t$ for for all continuity points of $z$ and $z^n_T\rightarrow z_T$. By the Portmanteau theorem this implies that $$\int_0^T h(t)\,dz^n_t\rightarrow\int_0^Th(t)\,dz_t.$$

Next we show the convergence of $\int_0^T\int_U f(t,x^n_t,\mu_t,u)\,q^n_t(du)dt$ to $\int_0^T\int_U f(t,x_t,\mu_t,u)\,q_t(du)dt$. 
By Assumption ${\cal A}_2$ the convergence of $x^n$ to $x$ yields $f(t,x^n_t,\mu_t,u)\rightarrow f(t,x_t,\mu_t,u)$ for each $t \notin Disc(x)$. From the compactness of $U$ it follows that $\sup_{u\in U}\left|f(t,x^n_t,\mu_t,u)-f(t,x_t,\mu_t,u)\right|\rightarrow 0$ for each $t \notin Disc(x)$. Since $Disc(x)$ is at most countable this implies
    \begin{equation*}
    \begin{split}
        & \left|\int_0^T\int_U f(t,x^n_t,\mu_t,u)\,q^n_t(du)dt-\int_0^T\int_U f(t,x_t,\mu_t,u)\,q^n_t(du)dt\right| \\
        \leq & \int_0^T \sup_{u\in U} \left| f(t,x^n_t,\mu_t,u)-f(t,x_t,\mu_t,u) \right| dt  \rightarrow 0.
        \end{split}
    \end{equation*}
By \cite[Definition 6.8]{Villani-2009}, $q^n\rightarrow q$ in $d_{\widetilde{\mathcal{U}}(\mathbb R)}$ implies $q^n\rightarrow q$ weakly. Moreover, the first marginal of $q^n$ is Lebesgue measure. Thus, by \cite[Corollary 2.9]{JM-1981}, $q^n$ converges to $q$ in the stable topology (cf.~\cite[Definition 1.2]{JM-1981} for the definition of the stable topology). For fixed $(x,\mu)\in\widetilde{\mathcal D}(\mathbb R)\times\mathcal{P}_p(\widetilde{\mathcal D}(\mathbb R))$, the compactness of $U$ and the growth condition on $f$ implies the boundedness of $f$. Hence the definition of stable topology yields that
\[
	\lim_{n\rightarrow\infty}\left|\int_0^T\int_U f(t,x_t,\mu_t,u)\,q^n_t(du)dt-\int_0^T\int_U f(t,x_t,\mu_t,u)\,q_t(du)dt\right|=0.
\]
So we get the convergence
\[
	\lim_{n\rightarrow\infty}\left|\int_0^T\int_U f(t,x^n_t,\mu_t,u)\,q^n_t(du)dt-\int_0^T\int_U f(t,x_t,\mu_t,u)\,q_t(du)dt\right|=0.
\]

{\it Step 3: joint continuity of $J$}. Thus far, we have established the {\sl separate} continuity of the mapping $(\mu,\omega)\rightarrow\mathcal{J}(\mu,\omega)$. We are now going to apply \cite[Definition 6.8(4)]{Villani-2009} to prove the {\sl joint} continuity of $J$.

To this end, notice first that for each fixed $\mu\in\mathcal{P}_p(\widetilde{\mathcal{D}}(\mathbb{R}))$, due to Assumption $\mathcal{A}_3$,
    \begin{equation*}
      \begin{split}
         &\left|\int_0^T\int_U f(t,x_t,\mu_t,u)\,q_t(du)dt+\int_0^T h(t)\,dz_t\right|\\
          \leq &~ C\left(1+\int_0^T\int_U\left(1+|x_t|^p+|u|^p+\int_{\mathbb{R}^d}|y|^p\mu_t(dy)\right)\,q_t(du)dt+z_T\right)\\
          \leq &~ C\left(1+d_{M_1}(x,0)^p+\mathcal{W}_{p,[0,T]\times U}\left(\frac{q}{T},\delta_0\right)^p+d_{M_1}(z,0)+\int_0^T\int_{\mathbb{R}^d}|y|^p\,\mu_t(dy)dt\right)\\
          \leq&~ C\left(1+d_{M_1}(x,0)^p+\mathcal{W}_{p,[0,T]\times U}\left(\frac{q}{T},\delta_0\right)^p+d_{M_1}(z,0)^p+\int_{\widetilde{\mathcal{D}}(\mathbb{R})}d_{M_1}(y,0)^p\,\mu(dy)\right).
      \end{split}
    \end{equation*}
    Hence, using the uniform convergence \eqref{uniform-continuity-f-mu}, it follows from \cite[Definition 6.8]{Villani-2009} that $(\mu^n,\mathbb{P}^n)\rightarrow(\mu,\mathbb{P})$ implies that
{\small        \begin{equation}\label{continuity-J-mu-P-(1)}
    \begin{split}
        & \left|E^{\mathbb{P}^n}\left(\int_0^T\int_U f(t,X_t,\mu_t^n,u)\,Q_t(du)dt+\int_0^T h(t)\,dZ_t\right)-E^{\mathbb{P}}\left(\int_0^T\int_U f(t,X_t,\mu_t,u)\,Q_t(du)dt+\int_0^Th(t)\,dZ_t\right)\right| \\
        \leq& \left|E^{\mathbb{P}^n}\left(\int_0^T\int_U f(t,X_t,\mu^n_t,u)\,Q_t(du)dt+\int_0^T h(t)\,dZ_t\right)-E^{\mathbb{P}^n}\left(\int_0^T\int_U f(t,X_t,\mu_t,u)\,Q_t(du)dt+\int_0^T h(t)\,dZ_t\right)\right|\\
&+\left|E^{\mathbb{P}^n}\left(\int_0^T\int_U f(t,X_t,\mu_t,u)\,Q_t(du)dt+\int_0^T h(t)\,dZ_t\right)-E^{\mathbb{P}}\left(\int_0^T\int_U f(t,X_t,\mu_t,u)\,Q_t(du)dt+\int_0^T h(t)\,dZ_t\right)\right|\\
\rightarrow &0.
        \end{split}
    \end{equation}}
Since the terminal cost functions is not necessarily Lipschitz continuous we need to argue differently in order to prove the continuous dependence of the expected terminal cost on $(\mu,\mathbb{P})$. First, we notice that for
each $\widetilde{p}>\bar{p}$, by the boundedness of $b$, $\sigma$ and $Z$, we have that
    \begin{equation}\label{uniform-bounded-any-order}
        \sup_nE^{\mathbb{P}^n}d_{M_1}(X,0)^{\widetilde{p}} \leq C<\infty,
    \end{equation}
which implies
    \begin{equation}\label{uniform-tightness-condition}
        \lim_{K\rightarrow\infty}\sup_n\int_{\{x:d_{M_1}(x,0)>K\}} d_{M_1}(x,0)^{\bar{p}}\,\mathbb{P}^n(dx)=0.
    \end{equation}
By Assumption ${\cal A}_3$,
\[
	|g(x_T,\mu_T)|\leq C\left(1+|x_T|^{\bar{p}}+\int |y|^p\mu_T(dy)\right)\leq C\left(1+|x_T|^{\bar{p}}\right).
\]	
Together with (\ref{uniform-tightness-condition}) this implies,
    \begin{equation}\label{continuity-J-mu-P-(2)}
        E^{\mathbb{P}^n}g(X_T,\mu_T)\rightarrow E^{\mathbb{P}}g(X_T,\mu_T).
    \end{equation}
By the tightness of $\{\mathbb{P}^n\}_{n\geq 1}$, for each $\epsilon>0$, there exists a compact set $K_{\epsilon}\subseteq\widetilde{\mathcal{D}}(\mathbb{R})$ such that
\begin{equation}\label{continuity-J-mu-P-(3)}
\begin{split} &~\left|\int_{\widetilde{\mathcal{D}}(\mathbb{R})}g(x_T,\mu^n_T)\mathbb{P}^n(dx)-\int_{\widetilde{\mathcal{D}}(\mathbb{R})}g(x_T,\mu_T)\mathbb{P}^n(dx)\right|\\
\leq&~\int_{K_{\epsilon}}|g(x_T,\mu^n_T)-g(x_T,\mu_T)|\mathbb{P}^n(dx)+\int_{\widetilde{\mathcal{D}}(\mathbb{R})/K_{\epsilon}}|g(x_T,\mu^n_T)-g(x_T,\mu_T)|\mathbb{P}^n(dx)\\
\leq&~\sup_{x\in K_{\epsilon}}|g(x_T,\mu^n_T)-g(x_T,\mu_T)|+\left(\int_{\widetilde{\mathcal{D}}(\mathbb{R})/K_{\epsilon}}|g(x_T,\mu^n_T)-g(x_T,\mu_T)|^2\mathbb{P}^n(dx)\right)^{\frac{1}{2}}\left(\sup_n\mathbb{P}^n(\widetilde{\mathcal{D}}(\mathbb{R})/K_{\epsilon})\right)^{\frac{1}{2}}\\
\leq&~\sup_{x\in K_{\epsilon}}|g(x_T,\mu^n_T)-g(x_T,\mu_T)|+C\epsilon^{\frac{1}{2}}~~~(\textrm{by }(\ref{uniform-bounded-any-order})).
\end{split}
\end{equation}
Thus,
\begin{equation}\label{continuity-J-mu-P-(4)}
\left|\int_{\widetilde{\mathcal{D}}(\mathbb{R})}g(x_T,\mu^n_T)\mathbb{P}^n(dx)-\int_{\widetilde{\mathcal{D}}(\mathbb{R})}g(x_T,\mu_T)\mathbb{P}^n(dx)\right|\rightarrow 0.
\end{equation}
The convergence (\ref{continuity-J-mu-P-(1)}), (\ref{continuity-J-mu-P-(2)}) and (\ref{continuity-J-mu-P-(4)}) yield the joint continuity of $J(\cdot,\cdot)$.
\end{proof}

\begin{remark}\label{joint-LSC-J}
The preceding lemma shows that under a finite fuel constraint the cost functional $J$ is jointly continuous. In general, $J$ is only lower semi-continuous. In fact, for each positive constant $K$, let $g_K(\cdot) := g(\cdot) \wedge K$ and
\begin{equation*}
\begin{split}
    \mathcal J_K(\mu,\omega)
    :=&~\int_0^T\int_Uf(t,x_t,\mu_t,u)\,q_t(du)dt+g_K(x_T,\mu_T)+\int_0^T h(t)\,dz_t
\end{split}
\end{equation*}
By assumption $\mathcal A_3$, we have
\[
    |g_K(x,\mu)|\leq2K+C_2\left(1+\int_{\mathbb R^d}|y|^p\mu(dy)\right)\leq C\left(1+\int_{\mathbb R^d}|y|^p\mu(dy)\right).
\]
So \eqref{continuity-J-mu-P-(2)} and \eqref{continuity-J-mu-P-(3)} still hold with $g$ replaced by $g_K$ while \eqref{continuity-J-mu-P-(1)} still holds for $f$ and $h$. So $(\mu^n,\mathbb P^n)\rightarrow (\mu,P)$ implies
\[
    \int_\Omega\mathcal J_K(\mu^n,\omega) \mathbb{P}^n(d\omega)\rightarrow    \int_\Omega\mathcal J_K(\mu,\omega) \mathbb{P}(d\omega).
\]
Thus, by monotone convergence theorem, we have
\[
    \liminf_{n\rightarrow\infty}\int_\Omega\mathcal J(\mu^n,\omega) \mathbb{P}^n(d\omega)\geq \int_\Omega\mathcal J(\mu,\omega) \mathbb{P}(d\omega).
\]
\end{remark}

We now recall from \cite[Proposition 3.1]{Haussmann-Suo-1995} an equivalent characterization for the set of control rules $\mathcal{R}(\mathcal{\mu})$. This equivalent characterization allows us to verify the martingale property of the state process by verifying the martingale property of its continuous part. Since it is difficult to locate the proof, we give a sketch one in Appendix \ref{sketch-proof}.
\begin{proposition}\label{transfer-X-to-Y}
A probability measure $\mathbb{P}$ is a control rule with respect to the given $\mu\in\mathcal{P}_p(\widetilde{\mathcal{D}}(\mathbb{R}))$ if and only if there exists an $\mathcal{F}_t$ adapted process $Y\in{\mathcal{C}}(0,T)$ on the filtered canonical space $(\Omega,\mathcal{F},\mathcal{F}_t)$ such that
    \begin{itemize}
        \item[(1)] $\mathbb{P}(\omega\in\Omega:~X_{t}(\omega)=Y_{t}(\omega)+\int_0^{t}c(s)\,dZ_{s}(\omega),t\in[0,T])=1$;
        \item[(2)] for each $\phi\in\mathcal{C}^2_b(\mathbb{R}^d)$, $\overline{\mathcal{M}}^{\mu,\phi}$ is a continuous $\left(\mathbb{P},\mathcal{F}_t\right)$ martingale, where
             \begin{equation}
             \overline{\mathcal{M}}^{\mu,\phi}_t=\phi(Y_t)-\int_0^t\int_U \bar{\mathcal{L}}\phi(s,X_s,Y_s,\mu_s,u)\,Q_s(du)ds,\qquad t\in[0,T]
              \end{equation}
              with $\bar{\mathcal{L}}\phi(s,x,y,\nu,u)=\sum_i b_i(s,x,\nu,u)\partial_{y_i}\phi(y)+\frac{1}{2}\sum_{ij}a_{ij}(s,x,\nu,u)\frac{\partial^2\phi(y)}{\partial_{y_i}\partial_{y_j}}$ for each $(t,x,y,\nu,u)\in[0,T]\times\mathbb{R}^d\times\mathbb{R}^d\times\mathcal{P}_p(\mathbb{R}^d)\times  U$.
    \end{itemize}
\end{proposition}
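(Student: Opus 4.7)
\emph{Plan of proof.} The strategy is to adopt the decomposition $X_t = Y_t + A_t$ with $A_t := \int_0^t c(s)\,dZ_s$ as an ansatz and to show that the martingale problems on $X$ and on $Y$ are mutually derivable via It\^o's formula, with $Y$ playing the role of the continuous part of $X$. Throughout, we exploit that $A$ is of finite variation with jumps $\Delta A_s = c(s)\,\Delta Z_s$, while condition (2) is just the continuous-semimartingale martingale problem for $Y$ with coefficients sampled at $X_s$ rather than at $Y_s$.

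\textbf{Forward direction.} Assume $\mathbb{P}$ is a control rule, so $\mathcal{M}^{\mu,\phi}$ in \eqref{martingale-problem-MFGs} is a continuous martingale for every $\phi\in\mathcal{C}^2_b(\mathbb{R}^d)$. A direct computation of the jump of each of the four summands defining $\mathcal{M}^{\mu,\phi}$ at time $s$ yields
\begin{equation*}
\Delta\mathcal{M}^{\mu,\phi}_s = (\partial_x\phi(X_{s-}))^{\top}\bigl(\Delta X_s - c(s)\,\Delta Z_s\bigr),
\end{equation*}
because the $\phi(X_\cdot)$-jump cancels against the correction sum while only the $dZ$-integral contributes a $c(s)\,\Delta Z_s$-jump. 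Continuity of $\mathcal{M}^{\mu,\phi}$ for every $\phi\in\mathcal{C}^2_b(\mathbb{R}^d)$ (in particular for $\phi$ whose gradient at a fixed point realizes every direction) forces $\Delta X_s=c(s)\,\Delta Z_s$ a.s.\ for all $s$, so $Y_t:=X_t-A_t$ is continuous and $\mathcal{F}_t$-adapted, which is (1). Next, inserting linear and quadratic test functions (truncated so as to lie in $\mathcal{C}^2_b(\mathbb{R}^d)$) into the martingale problem for $X$ and using $\Delta X=c\,\Delta Z$ to cancel the correction sum identifies the semimartingale decomposition of $Y$: the process
\begin{equation*}
N^i_t := Y^i_t - \int_0^t\int_U b_i(s,X_s,\mu_s,u)\,Q_s(du)\,ds
\end{equation*}
is a continuous local martingale with $\langle N^i,N^j\rangle_t=\int_0^t\int_U a_{ij}(s,X_s,\mu_s,u)\,Q_s(du)\,ds$. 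It\^o's formula applied to the continuous semimartingale $Y$ then gives, for every $\phi\in\mathcal{C}^2_b(\mathbb{R}^d)$,
\begin{equation*}
\phi(Y_t)-\phi(Y_0) = \int_0^t(\partial_y\phi(Y_s))^{\top}\,dN_s + \int_0^t\int_U \bar{\mathcal{L}}\phi(s,X_s,Y_s,\mu_s,u)\,Q_s(du)\,ds,
\end{equation*}
which is exactly the decomposition required by (2).

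\textbf{Backward direction.} Assume (1) and (2). Write $X=Y+A$ and apply the It\^o formula for c\`adl\`ag semimartingales to $\phi(X_t)$. Because $Y$ is continuous, the continuous martingale part of $X$ equals that of $Y$, so $\langle X^c\rangle=\langle Y\rangle$, and the set $\{s:\Delta X_s\neq 0\}$ is countable and charges neither $dY_s$ nor $d\langle Y\rangle_s$; hence we may freely replace $X_{s-}$ by $X_s$ in the integrals against these measures. Using the decomposition of $Y$ coming from (2), and noting that $\int_0^t(\partial_x\phi(X_{s-}))^{\top}\,dA_s=\int_0^t(\partial_x\phi(X_{s-}))^{\top}c(s)\,dZ_s$, one collects terms to identify $\phi(X_t)-\phi(X_0)$ with the three predictable finite-variation terms in \eqref{martingale-problem-MFGs} plus a stochastic integral against the continuous local martingale part of $Y$. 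Consequently, $\mathcal{M}^{\mu,\phi}$ is a continuous (local) martingale, and boundedness of $\partial_x\phi$ together with the uniform bound on $a$ from $\mathcal{A}_1$ upgrades it to a true martingale.

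\textbf{Main obstacle.} The most delicate point is the jump identification $\Delta X_s=c(s)\,\Delta Z_s$ at the outset of the forward direction: it requires leveraging the continuity of $\mathcal{M}^{\mu,\phi}$ simultaneously across the whole class $\mathcal{C}^2_b(\mathbb{R}^d)$, and must be carried out pathwise so that the ansatz $Y=X-A$ actually yields a continuous process. Once this structural fact is in place, both directions reduce to careful but routine It\^o calculus for c\`adl\`ag semimartingales whose martingale part is continuous, with the only book-keeping subtlety being the swapping of $X_{s-}$ and $X_s$ in integrals against continuous measures.
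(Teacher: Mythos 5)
Your proof is correct, and the backward implication (conditions (1)--(2) imply that $\mathbb{P}$ is a control rule) follows essentially the paper's own route: identify the continuous local martingale $N_t=Y_t-\int_0^t\int_U b\,Q_s(du)\,ds$ and its bracket via (truncated) linear and quadratic test functions, then apply It\^o's formula to $\phi(X_t)=\phi(Y_t+\int_0^t c(s)\,dZ_s)$. The forward implication, however, is where you genuinely diverge. The paper invokes the martingale-measure representation (Proposition \ref{result-from-Karoui}, based on El Karoui--M\'el\'eard) to realize the law of $(X,Q,Z)$ as a weak solution of the SDE \eqref{martingale-measure-SDE} on an extension of the probability space; there the decomposition $\overline{X}=\overline{Y}+\int_0^\cdot c(s)\,d\overline{Z}_s$ with $\overline{Y}$ continuous is immediate from the SDE, and the continuity of $Y:=X-\int_0^\cdot c\,dZ$ and the martingale property of $\overline{\mathcal{M}}^{\mu,\phi}$ are then transferred back by equality in law. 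You instead stay entirely on the canonical space: you compute $\Delta\mathcal{M}^{\mu,\phi}_s=(\partial_x\phi(X_{s-}))^{\top}(\Delta X_s-c(s)\Delta Z_s)$ (this computation is correct) and use the assumed continuity of $\mathcal{M}^{\mu,\phi}$ over a countable separating family of test functions to force $\Delta X_s=c(s)\Delta Z_s$ pathwise, so that $Y=X-\int_0^\cdot c\,dZ$ is continuous and adapted by construction, after which the characteristics of $Y$ are read off as in the other direction. Your approach is more elementary and self-contained (no extension of the probability space, no representation theorem, and $Y$ is manifestly $\mathcal{F}_t$-adapted on the canonical space rather than obtained through a distributional identification), at the cost of the pathwise jump-identification step, which you correctly flag and handle; the paper's approach buys brevity by outsourcing the structural work to Proposition \ref{result-from-Karoui}, which it needs elsewhere anyway. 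Both arguments are sound.
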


The previous characterization of control rules allows us to show that the correspondence $\mathcal{R}$ has a closed graph.

\begin{proposition}\label{admissibility-limit-P-proposition}
Suppose that $\mathcal{A}_1$ and $\mathcal{A}_4$-$\mathcal{A}_6$ hold.
For any sequence $\{\mu^n\}_{n\geq 1}\subseteq\mathcal{P}_p(\widetilde{\mathcal{D}}(\mathbb{R}))$ and $\mu\in\mathcal{P}_p(\widetilde{\mathcal{D}}(\mathbb{R}))$ with $\mu^n\rightarrow\mu$ in $\mathcal{W}_{p,(\widetilde{\mathcal{D}}(\mathbb{R}),d_{M_1})}$, if $\mathbb{P}^n\in\mathcal{R}(\mu^n)$ and $\mathbb{P}^n\rightarrow\mathbb{P}$ in $\mathcal{W}_{p}$, then $\mathbb{P}\in\mathcal{R}(\mu)$.
\end{proposition}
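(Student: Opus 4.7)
The plan is to exploit Proposition~\ref{transfer-X-to-Y}: $\mathbb{P}\in\mathcal{R}(\mu)$ is equivalent to the existence of a $\mathbb{P}$-a.s.~continuous $\mathcal{F}_t$-adapted process $Y$ with $X=Y+\int_0^\cdot c(s)\,dZ_s$ and such that $\overline{\mathcal{M}}^{\mu,\phi}(Y)$ is a continuous $(\mathbb{P},\mathcal{F}_t)$-martingale for every $\phi\in\mathcal{C}_b^2(\mathbb{R}^d)$. I take $Y_t(\omega):=X_t(\omega)-\int_0^t c(s)\,dZ_s(\omega)$, which is the canonical candidate. By the very same Proposition~\ref{transfer-X-to-Y} applied to each $\mathbb{P}^n\in\mathcal{R}(\mu^n)$, under $\mathbb{P}^n$ this process is already continuous and makes $\overline{\mathcal{M}}^{\mu^n,\phi}(Y)$ a continuous $\mathbb{P}^n$-martingale. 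So the task is to transport both properties to the limiting measure $\mathbb{P}$.

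\textbf{Continuity of $Y$ under $\mathbb{P}$.} Under each $\mathbb{P}^n$ the process $Y$ is, by Proposition~\ref{result-from-Karoui} and assumption $\mathcal{A}_1$, a continuous semimartingale with drift $\int_U b(t,X_t,\mu^n_t,u)Q_t(du)$ and martingale part of quadratic variation $\int_0^\cdot\!\int_U a(s,X_s,\mu^n_s,u)Q_s(du)\,ds$, both uniformly bounded by $\mathcal{A}_1$. Combining BDG with these uniform bounds yields a Kolmogorov-type estimate
\[
\sup_n \mathbb{E}^{\mathbb{P}^n}\bigl[|Y_t-Y_s|^{2\bar p}\bigr]\le C|t-s|^{\bar p},\qquad 0\le s<t\le T.
\]
Invoking Skorokhod's representation theorem, realize $(X^n,Q^n,Z^n)\sim\mathbb{P}^n$ and $(X,Q,Z)\sim\mathbb{P}$ on a common probability space with a.s.~convergence in $M_1\times d_{\widetilde{\mathcal{U}}(\mathbb{R})}\times M_1$. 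The preceding estimate and Kolmogorov--Chentsov make $\{Y^n\}$ tight in $\mathcal{C}([0,T])$ with the uniform topology, so along a subsequence $Y^n\to \hat Y$ a.s.~uniformly on $[0,T]$ for some continuous $\hat Y$. On the other hand, the $M_1$-continuity of the Stieltjes integral summarized in Appendix~B together with the continuity of $c$ gives $\int_0^\cdot c(s)\,dZ^n_s\to\int_0^\cdot c(s)\,dZ_s$ in $M_1$. Passing to the limit in $X^n=Y^n+\int_0^\cdot c(s)\,dZ^n_s$ and using uniqueness of $M_1$-limits forces $\hat Y=X-\int_0^\cdot c(s)\,dZ_s=Y$, so $Y$ is $\mathbb{P}$-a.s.~continuous.

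\textbf{Passing the martingale property to $\mathbb{P}$.} To conclude that $\overline{\mathcal{M}}^{\mu,\phi}(Y)$ is a $\mathbb{P}$-martingale it is enough to verify
\[
\mathbb{E}^{\mathbb{P}}\bigl[(\overline{\mathcal{M}}^{\mu,\phi}_t-\overline{\mathcal{M}}^{\mu,\phi}_s)\Psi\bigr]=0
\]
for every bounded, $\mathcal{F}_s$-measurable, $M_1$-continuous cylinder test functional $\Psi$; this identity holds with $\mu$ and $\mathbb{P}$ replaced by $\mu^n$ and $\mathbb{P}^n$ by hypothesis. Now that Step~2 has ensured $Y$ is $\mathbb{P}$-a.s.~continuous, the map $\omega\mapsto\phi(Y_t(\omega))$ is continuous at $\mathbb{P}$-a.e.~$\omega$ (since coordinate projection at $t$ is $M_1$-continuous at continuity points of the path). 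The time integral $\int_0^t\!\int_U\bar{\mathcal{L}}\phi(s,X_s,Y_s,\mu_s,u)Q_s(du)\,ds$ is handled by repeating the continuity argument from the proof of Lemma~\ref{continuity-J-mu-P}: the $M_1$-convergence gives $X^n_s\to X_s$ and $Y^n_s\to Y_s$ for a.e.~$s$; assumption $\mathcal{A}_5$ delivers the $\mu$-continuity; and the stable-topology convergence of $Q^n\to Q$ handles the $u$-integration. Boundedness of $a,b$ from $\mathcal{A}_1$ and the uniform $\bar p$-moment bound $\sup_n\mathbb{E}^{\mathbb{P}^n}d_{M_1}(X,0)^{\bar p}<\infty$ (established in the proof of Lemma~\ref{continuity-J-mu-P}) supply the uniform integrability needed to pass to the limit in expectation.

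\textbf{Main obstacle.} The delicate step is the continuity of $Y$ under $\mathbb{P}$: because $M_1$-limits of continuous paths may develop jumps, one cannot conclude continuity of $Y$ from continuity under each $\mathbb{P}^n$ by a soft Portmanteau argument. What rescues the situation is the martingale representation behind $Y$, which furnishes the uniform Kolmogorov bound, together with the $M_1$-continuity of $Z\mapsto\int_0^\cdot c(s)\,dZ_s$ that allows one to identify the uniform-topology subsequential limit of $Y^n$ with the canonical functional $X-\int_0^\cdot c\,dZ$. Once continuity of $Y$ is in hand, the martingale passage is a routine application of dominated convergence and the continuity arguments already used for the cost functional.
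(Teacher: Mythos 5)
Your strategy coincides with the paper's: obtain a uniform Kolmogorov-type moment bound for the continuous parts $Y^n=X-\int_0^\cdot c(s)\,dZ_s$ via the martingale-measure representation of Proposition \ref{result-from-Karoui}, deduce tightness in $\mathcal{C}(0,T)$, identify the limit decomposition through a Skorokhod representation, and then pass the martingale identity to the limit against bounded continuous $\mathcal{F}_s$-measurable test functionals. The supporting observations (that $M_1$-limits of continuous paths may jump, so continuity of $Y$ under $\mathbb{P}$ cannot be obtained softly, and that the moment bound is what rescues the argument) are exactly the right ones.

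There is, however, one step that does not go through as written. You first fix Skorokhod representatives of the triples $(X^n,Q^n,Z^n)$ converging a.s., and only afterwards invoke tightness of $\{Y^n\}$ in $\mathcal{C}([0,T])$ to claim that "along a subsequence $Y^n\to\hat Y$ a.s.\ uniformly." Tightness of the laws of $Y^n$ yields only weak convergence of those laws along a subsequence; it says nothing about a.s.\ convergence of the particular representatives you have already fixed. Nor can you recover a.s.\ convergence of $Y^n=X^n-\int_0^\cdot c\,dZ^n$ from the a.s.\ $M_1$-convergence of $X^n$ and $Z^n$, because subtraction is not $M_1$-continuous precisely when the limits share jump times — and here $X$ and $\int_0^\cdot c\,dZ$ jump simultaneously by construction. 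The repair is to reorder the argument as the paper does: establish tightness of the \emph{joint} laws of the four-tuple $(X,Q,Z,Y^n)$ under $\mathbb{P}^n$ in $\Omega\times\mathcal{C}(0,T)$, extract a weakly convergent subsequence, and apply Skorokhod's representation theorem to the whole four-tuple at once, so that $\widetilde{Y}^n\to\widetilde{Y}$ a.s.\ uniformly by construction and $\widetilde{Y}$ is continuous as a uniform limit of continuous paths. With that reordering your identification of the limit (via Proposition \ref{M1-continuity-addition}, using that the continuous $\widetilde Y$ shares no jumps with $\int_0^\cdot c\,d\widetilde Z$) and your limit passage in the martingale identity are sound.
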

\begin{proof}

In order to verify conditions (1) and (2), notice first that, for each $n$, there exists a stochastic process $Y^n\in{\mathcal{C}}(0,T)$ such that $$\mathbb{P}^n\left(X_t=Y^n_t+\int_0^t c(s)\,dZ_s, t\in[0,T] \right)=1$$ and such that the corresponding martingale problem is satisfied. In order to show that a similar decomposition and the martingale problem hold under the measure $\mathbb P$ we apply Proposition \ref{result-from-Karoui}. For each $n$, there exits a probability space $(\Omega^n,\mathcal{F}^n,\mathbb{Q}^n)$ that supports random variables $(\bar{X}^n,\bar{Q}^n,\bar{Z}^n)$ and a martingale measure $M^n$ with intensity $\bar{Q}^n$ such that $$\mathbb{P}^n=\mathbb{Q}^n\circ (\bar{X}^n,\bar{Q}^n,\bar{Z}^n)^{-1}$$ and
    \[
        d\bar{X}^n_t=\int_U b(t,\bar{X}^n_t,\mu^n_t,u)\,\bar{Q}^n_s(du)ds+\int_U \sigma(t,\bar{X}^n_t,\mu^n_t,u)\,M^n(du,dt)+c(t)d\bar{Z}^n_t.
    \]
Thus, for each $0\leq s<t\leq T$,
    \begin{equation}
        \begin{split}
            E^{\mathbb{P}^n}|Y^n_t-Y^n_s|^4=&~E^{\mathbb{P}^n}\left|\left(X_t-\int_0^tc(r)\,dZ_r\right)-\left(X_s-\int_0^sc(r)\,dZ_r\right)\right|^4\\
            =&~E^{\mathbb{Q}^n}\left|\left(\bar{X}^n_t-\int_0^tc(r)\,d\bar{Z}^n_r\right)-\left(\bar{X}^n_s-\int_0^sc(r)\,d\bar{Z}^n_r\right)\right|^4\\
            =&~E^{\mathbb{Q}^n}\left|\int_s^t\int_U b(r,\bar{X}^n_r,\mu^n_r,u)\,\bar{Q}^n_r(du)dr+\int_s^t\int_U \sigma(r,\bar{X}^n_r,\mu^n_r,u)\,M^n(du,dr)\right|^4\\
            \leq&~ C|t-s|^2.
        \end{split}
    \end{equation}
Hence, Kolmogorov's weak compactness criterion implies the tightness of $Y^n$. Therefore, taking a subsequence if necessary, the sequence $(X,Q,Z,Y^n)$ of random variables taking values in ${\Omega} \times {\cal C}(0,T)$ has weak limit $(\widehat{X},\widehat{Q},\widehat{Z},\widehat{Y})$ defined on some probability space.

By Skorokhod's representation theorem, there exists a probability space $(\widetilde{\Omega},\widetilde{\mathcal{F}},\mathbb{Q})$ that supports random variables $(\widetilde{X}^n,\widetilde{Q}^n,\widetilde{Z}^n,\widetilde{Y}^n)$ and $(\widetilde{X},\widetilde{Q},\widetilde{Z},\widetilde{Y})$ such that
\[
	{Law}(\widetilde{X}^n,\widetilde{Q}^n,\widetilde{Z}^n,\widetilde{Y}^n)={Law}(X,Q,Z,Y^n),
	\quad {Law}(\widetilde{X},\widetilde{Q},\widetilde{Z},\widetilde{Y})={Law}(\widehat{X},\widehat{Q},\widehat{Z},\widehat{Y})
\]
 and
\[
	(\widetilde{X}^n,\widetilde{Q}^n,\widetilde{Z}^n,\widetilde{Y}^n)\rightarrow(\widetilde{X},\widetilde{Q},\widetilde{Z},\widetilde{Y}) \quad \mathbb{Q}\mbox{-a.s.}
\]	
In particular, $\tilde{Y}\in\mathcal{C}(0,T)$ as the uniform limit of a sequence of continuous processes, and
\[
	\mathbb{Q} \left(\widetilde{X}_{t}=\widetilde{Y}_{t} +\int_0^{t}c(s)\,d\widetilde{Z}_s, t\in[0,T]\right)=1.
\]	
Since $\mathbb{P}^n\rightarrow \mathbb{P}$, we have $\mathbb{P}\circ(X,Q,Z)^{-1}=\mathbb{Q}\circ(\widetilde{X},\widetilde{Q},\widetilde{Z})^{-1}$. Hence, there exists a stochastic process $Y\in\mathcal{C}(0,T)$ such that
    $$\mathbb{P} \left( X_{t}=Y_{t}+\int_0^{t}c(s)\,dZ_s, t\in[0,T] \right)=1$$
 and $\mathbb{P}\circ(X,Q,Z,Y)^{-1}=\mathbb{Q}\circ(\widetilde{X},\widetilde{Q},\widetilde{Z},\widetilde{Y})^{-1}$.
Finally, for each $t\in[0,T]$, define
    \[
        \overline{\mathcal{M}}^{n,\mu^n,\phi}_t=\phi(Y^n_t)-\int_0^t\int_U\bar{\mathcal{L}}(s,X_s,Y^n_s,\mu^n_s,u)\,Q_s(du)ds,
    \]
    \[
         \widetilde{\mathcal{M}}^{n,\mu^n,\phi}_t=\phi(\widetilde{Y}^n_t)-\int_0^t\int_U\bar{\mathcal{L}}(s,\widetilde{X}^n_s,\widetilde{Y}^n_s,\mu^n_s,u)\,\widetilde{Q}^n_s(du)ds,
    \]
and
    \[
        \widetilde{\mathcal{M}}^{\mu,\phi}_t=\phi(\widetilde{Y}_t)-\int_0^t\int_U\bar{\mathcal{L}}(s,\widetilde{X}_s,\widetilde{Y}_s,\mu_s,u)\,\widetilde{Q}_s(du)ds.
    \]
For each $0\leq s<t\leq T$ and each $F$ that is continuous, bounded and $\mathcal{F}_s$-measurable, we have
    \begin{equation}
        \begin{split}
            0=&~E^{\mathbb{P}^n}\left(\overline{\mathcal{M}}^{n,\mu^n,\phi}_t-\overline{\mathcal{M}}^{n,\mu^n,\phi}_s\right)F(X,Q,Z)=E^{\mathbb{Q}}\left(\widetilde{\mathcal{M}}^{n,\mu^n,\phi}_t-\widetilde{\mathcal{M}}^{n,\mu^n,\phi}_s\right)F(\widetilde{X}^n,\widetilde{Q}^n,\widetilde{Z}^n)\\
            &\rightarrow E^{\mathbb{Q}}\left(\widetilde{\mathcal{M}}^{\mu^*,\phi}_t-\widetilde{\mathcal{M}}^{\mu^*,\phi}_s\right)F(\widetilde{X},\widetilde{Q},\widetilde{Z})=E^{\mathbb{P}}\left(\overline{\mathcal{M}}^{\mu,\phi}_t-\overline{\mathcal{M}}^{\mu,\phi}_s\right)F(X,Q,Z).
        \end{split}
    \end{equation}
\end{proof}
\begin{remark}\label{remark-on-admissibility}
Note that the proof of Proposition \ref{admissibility-limit-P-proposition}, does not require the finite fuel constraint.
\end{remark}
The next corollary shows that the correspondence $\mathcal R$ is continuous in the sense of \cite[Definition 17.2, Theorem 17.20, 17.21]{AB-2006}.
%
\begin{corollary}\label{continuity-R}
Suppose that $\mathcal{A}_1$, $\mathcal{A}_4$-$\mathcal{A}_6$ hold. Then,  $\mathcal{R}:\mathcal{P}_p(\widetilde{\mathcal{D}}(\mathbb{R}))\rightarrow 2^{\mathcal{P}_p(\Omega)}$ is continuous and compact-valued.
\end{corollary}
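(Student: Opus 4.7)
My plan is to split the statement into three claims: compact-valuedness, upper hemi-continuity, and lower hemi-continuity of $\mathcal{R}$. The first two are essentially corollaries of the machinery already assembled in this section, while lower hemi-continuity requires a genuine construction and is where the real work lies.

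For compact-valuedness, I would fix $\mu$ and apply Proposition \ref{admissibility-limit-P-proposition} to the constant sequence $\mu^n \equiv \mu$ to conclude that $\mathcal{R}(\mu)$ is $\mathcal{W}_p$-closed. Combined with Lemma \ref{relative-compactness-union-control-rule}, which asserts the relative compactness of $\bigcup_\nu \mathcal{R}(\nu)$, this yields compactness of $\mathcal{R}(\mu)$. For upper hemi-continuity, the sequential characterization for compact-valued correspondences into a metric space applies: given $\mu^n\to\mu$ and $\mathbb{P}^n\in\mathcal{R}(\mu^n)$, Lemma \ref{relative-compactness-union-control-rule} extracts a $\mathcal{W}_p$-convergent subsequence $\mathbb{P}^{n_k}\to\mathbb{P}$, and Proposition \ref{admissibility-limit-P-proposition} places $\mathbb{P}\in\mathcal{R}(\mu)$. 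This is exactly the closed-graph route of \cite[Theorem 17.20]{AB-2006}.

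For lower hemi-continuity, the idea is to keep the relaxed control $Q$ and the singular control $Z$ frozen along the sequence and to perturb only the state dynamics through the mean-field input. Given $\mathbb{P}\in\mathcal{R}(\mu)$ and $\mu^n\to\mu$, I invoke Proposition \ref{result-from-Karoui} to realize $(X,Q,Z)$ on a filtered extension carrying a martingale measure $M$ with intensity $Q$ such that $X$ satisfies the SDE driven by $b(\cdot,\mu)$, $\sigma(\cdot,\mu)$, $c$. For each $n$ I let $X^n$ solve the analogous SDE with $\mu_t$ replaced by $\mu^n_t$ but with the same driving data $(Q,Z,M)$; existence and uniqueness follow from the Lipschitz continuity in $x$ in $\mathcal{A}_1$. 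Setting $\mathbb{P}^n:=\mathrm{Law}(X^n,Q,Z)$, the martingale problem of Definition \ref{control-rule-mu} is directly verified for $\mathbb{P}^n$ against the input $\mu^n$, so $\mathbb{P}^n\in\mathcal{R}(\mu^n)$.

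The principal obstacle is the convergence $\mathbb{P}^n\to\mathbb{P}$ in $\mathcal{W}_p$. I would use Assumption $\mathcal{A}_5$ together with Burkholder-Davis-Gundy and Gronwall to bound $E\sup_{t\leq T}|X^n_t-X_t|^p$ by a constant multiple of $\int_0^T \mathcal{W}_p(\mu^n_t,\mu_t)^p\,dt$, with the constant controlled via the local boundedness of $L$ and the uniform $\bar p$-moment bounds supplied by $\mathcal{A}_1$ and the finite-fuel constraint on $Z$. This integral vanishes by the Skorokhod-representation argument already carried out in Step 1 of Lemma \ref{continuity-J-mu-P}. Since $X^n-X$ is continuous (the singular parts cancel), uniform $L^p$-convergence yields convergence in probability of $(X^n,Q,Z)\to(X,Q,Z)$ in $\Omega$ (using that $M_1$-convergence is weaker than uniform convergence), hence weak convergence of $\mathbb{P}^n$ to $\mathbb{P}$; the uniform $\bar p$-moment bounds provide the uniform integrability needed to upgrade this to $\mathcal{W}_p$ convergence, completing the lower hemi-continuity claim and hence continuity of $\mathcal{R}$ in the sense of \cite[Definition 17.2]{AB-2006}.
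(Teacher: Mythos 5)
Your proposal is correct and follows essentially the same route as the paper: compactness and upper hemi-continuity via Lemma \ref{relative-compactness-union-control-rule}, the closed-graph Proposition \ref{admissibility-limit-P-proposition} and \cite[Theorem 17.20]{AB-2006}, while the lower hemi-continuity construction you spell out (freeze $(Q,Z,M)$ on the extension, re-solve the SDE with $\mu^n$, and use $\mathcal{A}_1$, $\mathcal{A}_5$, BDG and Gronwall) is precisely the argument the paper delegates to \cite[Lemma 4.4]{Lacker-2015}. No gaps of substance.
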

\begin{proof}
The lower hemi-continuity of $\mathcal{R}$ can be dealt with as \cite[Lemma 4.4]{Lacker-2015} since $b$ and $\sigma$ are Lipschitz continuous in $x$. Lemma \ref{relative-compactness-union-control-rule}, Proposition \ref{admissibility-limit-P-proposition} and \cite[Theorem 17.20]{AB-2006} imply that $\mathcal R$ is upper hemi-continuous and compact-valued.
\end{proof}


\begin{corollary}\label{existence-singular-control}
Under assumptions $\mathcal{A}_1$-$\mathcal{A}_6$, $\mathcal{R}^*(\mu)\neq{\O}$ for each $\mu\in\mathcal{P}_p(\widetilde{\mathcal{D}}(\mathbb{R}))$ and $\mathcal{R}^*$ is upper hemi-continuous.
\end{corollary}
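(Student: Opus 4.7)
The plan is to invoke the Berge maximum theorem; all the technical ingredients are already in place. Specifically, Corollary \ref{continuity-R} provides that $\mathcal{R}$ is continuous (both upper and lower hemi-continuous) and compact-valued, while Lemma \ref{continuity-J-mu-P} provides the joint continuity of $J$ on $\textrm{Gr}\mathcal{R}$. Non-emptiness of $\mathcal{R}(\mu)$ for each $\mu$ is immediate from the standard existence of weak solutions to the martingale problem under $\mathcal{A}_1$ (take, for instance, $Z\equiv 0$ together with a deterministic constant regular control $\delta_{\widetilde{u_0}}$). With these inputs, \cite[Theorem 17.31]{AB-2006} (Berge's maximum theorem) applied to the parametrized minimization of $J(\mu,\cdot)$ over $\mathcal{R}(\mu)$ delivers both conclusions simultaneously: $\mathcal{R}^*(\mu)$ is non-empty and compact, and the argmin correspondence $\mathcal{R}^*$ is upper hemi-continuous.

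For readers who prefer a direct verification, non-emptiness follows because $J(\mu,\cdot)$ is continuous on the non-empty compact set $\mathcal{R}(\mu)$ and hence attains its infimum. To check upper hemi-continuity by hand, one fixes a sequence $\mu^n\to\mu$ in $\mathcal{P}_p(\widetilde{\mathcal{D}}(\mathbb{R}))$ together with $\mathbb{P}^n\in\mathcal{R}^*(\mu^n)$. Lemma \ref{relative-compactness-union-control-rule} then gives a convergent subsequence $\mathbb{P}^n\to\mathbb{P}$, and Proposition \ref{admissibility-limit-P-proposition} (closedness of $\textrm{Gr}\mathcal{R}$) yields $\mathbb{P}\in\mathcal{R}(\mu)$. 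To identify $\mathbb{P}$ as optimal, one picks any competitor $\mathbb{P}'\in\mathcal{R}(\mu)$; the lower hemi-continuity part of Corollary \ref{continuity-R} produces $\mathbb{P}'^n\in\mathcal{R}(\mu^n)$ with $\mathbb{P}'^n\to\mathbb{P}'$, and optimality of $\mathbb{P}^n$ gives
\[
    J(\mu^n,\mathbb{P}^n)\leq J(\mu^n,\mathbb{P}'^n).
\]
Passing to the limit on both sides via the joint continuity of $J$ (Lemma \ref{continuity-J-mu-P}) yields $J(\mu,\mathbb{P})\leq J(\mu,\mathbb{P}')$, so $\mathbb{P}\in\mathcal{R}^*(\mu)$.

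I do not expect a genuine obstacle: the heavy work (tightness of controls, closedness of the graph, continuity of the cost, lower hemi-continuity of $\mathcal{R}$) has already been carried out in Lemma \ref{relative-compactness-union-control-rule}, Lemma \ref{continuity-J-mu-P}, Proposition \ref{admissibility-limit-P-proposition}, and Corollary \ref{continuity-R}. The only mildly delicate point is making sure the hypotheses of Berge's theorem are met on the correct spaces, i.e.\ on $\mathcal{P}_p(\widetilde{\mathcal{D}}(\mathbb{R}))$ and $\mathcal{P}_p(\Omega)$ equipped with the Wasserstein topologies induced by $d_{M_1}$; this is precisely what Corollary \ref{continuity-R} was formulated to guarantee, so no further work is required.
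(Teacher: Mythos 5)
Your proposal is correct and follows essentially the same route as the paper: non-emptiness of $\mathcal{R}(\mu)$ via the standard weak existence theory for the uncontrolled/constant-control case, followed by Berge's maximum theorem (\cite[Theorem 17.31]{AB-2006}) applied with the continuity and compact-valuedness of $\mathcal{R}$ from Corollary \ref{continuity-R} and the joint continuity of $J$ from Lemma \ref{continuity-J-mu-P}. The paper's proof is exactly this short argument (citing \cite[Section 5.4]{Karatzas-Shreve-1991} for non-emptiness), so your additional hands-on verification is redundant but harmless.
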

\begin{proof}
By  \cite[Section 5.4]{Karatzas-Shreve-1991}, for each $\mu\in\mathcal{P}_p(\widetilde{\mathcal{D}}(\mathbb{R}))$ the set $\mathcal{R}(\mu)$ is nonempty. Corollary \ref{continuity-R} implies that $\mathcal{R}$ is compact-valued and continuous. By Lemma \ref{continuity-J-mu-P}, $J:Gr\mathcal{R}\rightarrow\mathbb{R}$ is jointly continuous. Thus, \cite[Theorem 17.31]{AB-2006} yields that $\mathcal{R}^*$ is nonempty valued and upper hemi-continuous.
\end{proof}
\begin{remark}\label{existence-singular-control-remark}
Corollary \ref{existence-singular-control} in fact shows that the stochastic singular control problem (\ref{model-stochastic-singular-control}) admits an optimal control rule in the sense of Definition \ref{control-rule}. Using our method, we could have obtained Corollary \ref{existence-singular-control} under the same assumptions of the coefficients as in \cite{Haussmann-Suo-1995}. We will generalize it to McKean-Vlasov case at the end of this section.
\end{remark}

{
\begin{theorem}\label{existence-MFG-finite-fuel}
Under assumptions $\mathcal{A}_1$-$\mathcal{A}_6$ and the finite-fuel constraint $Z \in \widetilde{\mathcal{A}}^m(\mathbb{R})$, there exists a relaxed solution to (\ref{model-MFGs-singular-control}).
\end{theorem}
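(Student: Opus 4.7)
The plan is to apply the Kakutani--Fan--Glicksberg fixed point theorem to the set-valued map
\[
\Phi(\mu):=\{\mathbb{P}\circ X^{-1}:\mathbb{P}\in\mathcal{R}^*(\mu)\}
\]
on a suitably chosen compact convex subset $K\subseteq\mathcal{P}_p(\widetilde{\mathcal{D}}(\mathbb{R}))$. Any fixed point $\mu^*\in\Phi(\mu^*)$ immediately yields a relaxed solution: by definition there exists $\mathbb{P}^*\in\mathcal{R}^*(\mu^*)$ with $\mathbb{P}^*\circ X^{-1}=\mu^*$, which is exactly the statement $\mathbb{P}^*\in\mathcal{R}^*(\mathbb{P}^*\circ X^{-1})$.

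For the domain, I would set $\mathcal{V}:=\{\mathbb{P}\circ X^{-1}:\mathbb{P}\in\mathcal{R}(\mu),\ \mu\in\mathcal{P}_p(\widetilde{\mathcal{D}}(\mathbb{R}))\}$ and take $K$ to be the closed convex hull of $\mathcal{V}$. Lemma \ref{relative-compactness-union-control-rule} gives relative $\mathcal{W}_p$-compactness of $\bigcup_\mu\mathcal{R}(\mu)$, hence of its image $\mathcal{V}$ under the continuous pushforward $\mathbb{P}\mapsto\mathbb{P}\circ X^{-1}$. The uniform $\bar p$-th moment bound with $\bar p>p$ obtained in the proof of Proposition \ref{relative-compactness-general-result} is preserved by convex combinations and closure, so $K$ remains $\mathcal{W}_p$-compact by combining Prokhorov's theorem with the $p$-uniform integrability of the coordinate map. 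Since $\mathcal{R}^*(\mu)\subseteq\mathcal{R}(\mu)$ for every $\mu$, the self-map condition $\Phi(K)\subseteq\mathcal{V}\subseteq K$ is automatic.

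The hypotheses on $\Phi|_K$ follow from the machinery already developed. Nonemptiness of $\Phi(\mu)$ is part of Corollary \ref{existence-singular-control}. For convexity, note that the martingale problem of Definition \ref{control-rule-mu} defining $\mathcal{R}(\mu)$ is linear in $\mathbb{P}$, so $\mathcal{R}(\mu)$ is convex; the cost $J(\mu,\cdot)$ is also linear in $\mathbb{P}$, so $\mathcal{R}^*(\mu)=\mathrm{argmin}_{\mathcal{R}(\mu)}J(\mu,\cdot)$ is convex; and the pushforward along $X$ preserves convexity. Upper hemi-continuity with compact values (equivalently, closed graph in the compact set $K\times K$) transfers from $\mathcal{R}^*$, which by Corollary \ref{existence-singular-control} has this property, via continuity of the pushforward. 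Kakutani--Fan--Glicksberg then delivers the fixed point.

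The main technical wrinkle is the usual one for MFG fixed-point arguments of this flavour: $\mathcal{P}_p(\widetilde{\mathcal{D}}(\mathbb{R}))$ is not itself a topological vector space, so to legitimately invoke Kakutani--Fan--Glicksberg one has to embed it into the locally convex Hausdorff space of signed Borel measures on $\widetilde{\mathcal{D}}(\mathbb{R})$ equipped with the topology of weak convergence, and verify that $K$ is compact there. This is where the strengthened $\bar p$-th moment control does its work, since it upgrades weak convergence on $K$ to $\mathcal{W}_p$-convergence, which is the topology in which the continuity results of Lemma \ref{continuity-J-mu-P} and Corollary \ref{existence-singular-control} were established. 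Everything else is bookkeeping on the consequences of Lemmas \ref{relative-compactness-union-control-rule}, \ref{continuity-J-mu-P} and Corollary \ref{existence-singular-control}.
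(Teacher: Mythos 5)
Your proposal is correct and follows essentially the same route as the paper: both apply Kakutani--Fan--Glicksberg to $\mu\mapsto\{\mathbb{P}\circ X^{-1}:\mathbb{P}\in\mathcal{R}^*(\mu)\}$ after embedding $\mathcal{P}_p(\widetilde{\mathcal{D}}(\mathbb{R}))$ into the space of signed measures, with nonemptiness, compact-valuedness and upper hemi-continuity supplied by Corollary \ref{existence-singular-control} and convex-valuedness by linearity of the martingale problem and of $J$ in $\mathbb{P}$. The only cosmetic difference is the choice of invariant compact convex domain: you take the closed convex hull of $\mathcal{V}=\{\mathbb{P}\circ X^{-1}:\mathbb{P}\in\mathcal{R}(\mu),\,\mu\}$ and must then argue that the closed convex hull of a relatively $\mathcal{W}_p$-compact set is still $\mathcal{W}_p$-compact (true, since both tightness and $p$-uniform integrability are stable under convex combinations and closure), whereas the paper directly writes down $S=\{\mathbb{P}:\,\mathbb{P}(\widetilde{w}_s(X,\delta)>\eta)\leq k(\delta)/\eta,\;E^{\mathbb{P}}\sup_t|X_t|^{\bar p}\leq C\}$, which is transparently convex because it is cut out by constraints linear in $\mathbb{P}$; both choices serve the same purpose and rely on the estimate \eqref{result-from-Markov-inequality}.
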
}
\begin{proof}

From inequality (\ref{result-from-Markov-inequality}) in the proof of Proposition \ref{relative-compactness-general-result}, we see that for each $\mu\in\mathcal{P}_p(\widetilde{\mathcal{D}}(\mathbb{R}))$ and $\mathbb{P}\in\mathcal{R}(\mu)$, there exists a nonnegative function $k(\cdot)$ that is independent of $\mu$, such that $\mathbb{P}(\widetilde{w}_s({X},\delta)>\eta)\leq \frac{k(\delta)}{\eta}$ and $\lim_{\delta\rightarrow 0}k(\delta)=0$, where $\widetilde{w}_s$ is the modified oscillation function defined in (\ref{modified-oscillation}).

 Let us now define a set-valued map $\psi$ by
 \begin{align}\label{set-valued-function}
    \psi:~~&\mathcal{P}_p(\widetilde{\mathcal{D}}(\mathbb{R})) \rightarrow 2^{\mathcal{P}_p(\widetilde{\mathcal{D}}(\mathbb{R})},\nonumber\\
    & \mu \mapsto \{\mathbb{P}\circ X^{-1}: \mathbb{P}\in\mathcal{R}^*(\mu)\},
 \end{align}
and let
\begin{equation*}
    S=\left\{\mathbb{P}\in\mathcal{P}_p(\widetilde{\mathcal{D}}(\mathbb{R})):\textrm{for~ each}~ \eta>0,~ \mathbb{P}(\widetilde{w}_s({X},\delta)>\eta)\leq \frac{k(\delta)}{\eta}~\textrm{and}~E^{\mathbb{P}}\sup_{0\leq t\leq T}|X_t|^{\bar{p}}\leq C\right\}
\end{equation*}
where $C < \infty$ denotes the upper bound in (\ref{boundedness-expectation-any-order}). It can be checked that $S$ is non-empty, relatively compact, convex, and that $\psi(\mu)\subseteq S\subseteq\bar{S}$, for each $\mu\in\widetilde{\mathcal{D}}(\mathbb{R})$. Hence, $\psi: \bar{S}\rightarrow 2^{\bar{S}}$. Moreover, by Corollary \ref{existence-singular-control}, $\psi$ is nonempty-valued and upper hemi-continuous. Therefore, \cite[Corollary 17.55]{AB-2006} is applicable by embedding $\mathcal{P}_p(\widetilde{\mathcal{D}}(\mathbb{R}))$ into $\mathcal{M}(\widetilde{\mathcal{D}}(\mathbb{R}))$, the space of all bounded signed measures on $\widetilde{\mathcal{D}}(\mathbb{R})$ endowed with weak convergence topology. 
\end{proof}
\subsection{Existence in the general case} \label{Approximation}
In this section we establish the existence of a solution to MFGs with singular controls for general singular controls $Z \in \widetilde{\mathcal{A}}(\mathbb{R})$. For each $m$ and $\mu$, define $$\Omega^m=\widetilde{\mathcal{D}}(\mathbb{R})\times\widetilde{\mathcal{U}}(\mathbb R)\times\widetilde{\mathcal{A}}^m(\mathbb{R})$$ and denote by $\mathcal{R}^m(\mu)$ the control rules corresponding to $\Omega^m$ and $\mu$, that is, $\mathcal{R}^m(\mu)$ is the subset of probability measures in $\mathcal{R}(\mu)$ that are supported on $\Omega^m$.
Denote by \textbf{MFG}$^m$ the mean field games corresponding to $\Omega^m$. The preceding analysis showed that there exists a solution $\mathbb{P}^{m*}$ to \textbf{MFG}$^m$, for each $m$. {In what follows, $$\mu^{m*} :=\mathbb{P}^{m*}\circ X^{-1}.$$}
The next lemma shows that the sequence $\{\mathbb{P}^{m*}\}_{m\geq 1}$ is relatively compact; the subsequent one shows that any accumulation point is a control rule.

\begin{lemma}\label{boundedness-expectation-pm-zt-pbar}
Suppose $\mathcal{A}_1$, $\mathcal{A}_3$, $\mathcal{A}_4$ and $\mathcal{A}_6$ hold. Then there exists a constant $K < \infty$ such that $$\sup_m E^{\mathbb{P}^{m*}}|Z_T|^{\bar{p}}\leq K<\infty.$$ As a consequence, the sequence $\{\mathbb{P}^{m*}\}_{m\geq 1}$ is relatively compact in $\mathcal{W}_{p,\widetilde{\mathcal{D}}(\mathbb{R})\times\widetilde{\mathcal{U}}(\mathbb R)\times\widetilde{\mathcal{A}}(\mathbb{R})}$.
\end{lemma}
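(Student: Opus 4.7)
The plan is to exploit the optimality of $\mathbb P^{m*}$ by comparing its cost to that of the trivial singular control $Z\equiv 0$, while using the coercive lower bound on $g$ together with the strict positivity $c>0$ to extract a $Z_T^{\bar p}$ term from the cost of $\mathbb P^{m*}$. Concretely, fix any $u_0\in U$ and let $\mathbb P^m_0 \in \mathcal R^m(\mu^{m*})$ be the control rule corresponding to $(Q_t(du),Z_t)=(\delta_{u_0}(du),0)$; its SDE has bounded coefficients by $\mathcal A_1$, so the resulting state $\widetilde X$ has moments of every order bounded by a constant depending only on the model data. Using $\mathcal A_3$ (growth of $f,g$), boundedness of $h$, monotonicity of $Z$, and the elementary estimate $\int|y|^p\mu^{m*}_t(dy)=E^{\mathbb P^{m*}}|X_t|^p\le C(1+E^{\mathbb P^{m*}}Z_T^p)$ coming from $X_t=Y_t+\int_0^tc(s)\,dZ_s$ with $\|c\|_\infty<\infty$ and $E|Y_t|^p\le C$, one obtains
\[
  J(\mu^{m*},\mathbb P^{m*}) \;\le\; J(\mu^{m*},\mathbb P^m_0) \;\le\; C\bigl(1 + E^{\mathbb P^{m*}} Z_T^p\bigr).
\]

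For the matching lower bound, use $\mathcal A_3$ in the form $g(x,\nu)\ge -C_2 + C_2|x|^{\bar p}-C_2\int|y|^p\nu(dy)$ and the inequality $|a+b|^{\bar p}\ge 2^{1-\bar p}|a|^{\bar p}-|b|^{\bar p}$ applied with $a=\int_0^T c(s)\,dZ_s$ and $b=Y_T$. Since $\mathcal A_4$ gives $c_{\min}:=\min_{[0,T]} c(t)>0$, we have $|a|\ge c_{\min}Z_T$, and the continuous part $Y$ satisfies $E|Y_T|^{\bar p}\le C$ uniformly in $m$ thanks to $\mathcal A_1$. Combined with the $f$- and $h$-estimates as above, this yields
\[
  J(\mu^{m*},\mathbb P^{m*}) \;\ge\; C_2\,2^{1-\bar p}c_{\min}^{\bar p}\,E^{\mathbb P^{m*}} Z_T^{\bar p} \;-\; C\bigl(1 + E^{\mathbb P^{m*}}Z_T^p + E^{\mathbb P^{m*}}Z_T\bigr).
\]
Comparing the two displays and using $\bar p>p\ge 1$ so that $E Z_T^p\le (E Z_T^{\bar p})^{p/\bar p}$ and $E Z_T\le (E Z_T^{\bar p})^{1/\bar p}$ are sublinear in $E Z_T^{\bar p}$, standard Young-type arguments give a uniform bound $\sup_m E^{\mathbb P^{m*}}Z_T^{\bar p}\le K$.

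For the relative compactness claim, note that $\widetilde{\mathcal U}(\mathbb R)$ is already compact, so the marginals $\mathbb P^{m*}\circ Q^{-1}$ form a compact family. The uniform bound on $E Z_T^{\bar p}$ together with Markov's inequality shows that for any $\varepsilon>0$ there is $M$ with $\mathbb P^{m*}(Z_T>M)<\varepsilon$ for all $m$; since $\widetilde{\mathcal A}^M(\mathbb R)$ is $M_1$-compact by Corollary \ref{compactness-finite-fuel}, the laws $\mathbb P^{m*}\circ Z^{-1}$ are tight, and the $\bar p$-th moment bound upgrades tightness to $\mathcal W_p$-relative compactness. Finally, the $X$-marginals fall under Proposition \ref{relative-compactness-general-result} (or the variant indicated in Remark \ref{remark-on-relative-compactness}), with $c_n(t)=\int_0^t c(s)\,dZ^{m*}_s$ whose endpoint moments are controlled by $E Z_T^{\bar p}\le K$, giving $\mathcal W_p$-relative compactness of $\mathbb P^{m*}\circ X^{-1}$.

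The main obstacle is balancing the bookkeeping between $p$ and $\bar p$: the cost is quadratic (of order $\bar p$) in $Z_T$ on the lower side but only $p$-linear on the upper side via the mean field terms $\int|y|^p\mu^{m*}_t(dy)=E^{\mathbb P^{m*}}|X_t|^p$, which themselves depend on $Z_T$. The key structural inputs that close the loop are the strict positivity of $c$ (so $Z_T$ actually contributes to $|X_T|$), the coercive lower bound on $g$ in $\mathcal A_3$, and the strict inequality $\bar p>p$, which together make the absorbing-inequality step work.
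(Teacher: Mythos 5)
Your proof is correct and follows essentially the same route as the paper's: compare the optimal cost with that of the zero singular control, use the coercive lower bound on $g$ in $\mathcal{A}_3$ together with $c>0$ to extract $E^{\mathbb{P}^{m*}}Z_T^{\bar p}$, close the loop via $E^{\mathbb{P}^{m*}}|X_t|^p\le C(1+E^{\mathbb{P}^{m*}}Z_T^p)$, and absorb the lower-order terms using $\bar p>p$; the compactness step (Markov's inequality plus Corollary \ref{compactness-finite-fuel} for $Z$, Proposition \ref{relative-compactness-general-result} for $X$, and the moment bound to upgrade to $\mathcal{W}_p$) is also the paper's argument. The only (harmless) omission is the remark that $E^{\mathbb{P}^{m*}}|Z_T|^{\bar p}<\infty$ a priori because $\mathbb{P}^{m*}$ is supported on $\Omega^m$, which is needed for the absorption step to be legitimate.
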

\begin{proof}
We recall that $c(\cdot)$ is bounded away from $0$. Hence, there exists a constant $C < \infty$ such that, for all $m\in\mathbb{N}$,
    \begin{equation}\label{estimate-pm-zt-pbar}
        E^{\mathbb{P}^{m*}}|Z_T|^{\bar{p}}\leq C\left(1+E^{\mathbb{P}^{m*}}|X_T|^{\bar{p}}\right)
    \end{equation}
and
    \begin{equation}\label{estimate-pm-xt-p}
        E^{\mathbb{P}^{m*}}|X_t|^{{p}}\leq C\left(1+E^{\mathbb{P}^{m*}}|Z_T|^{{p}}\right),~t\in[0,T].
    \end{equation}
Moreover,
    \begin{equation*}
        \begin{split}
            J(\mu^{m*},\mathbb{P}^{m*})=&~E^{\mathbb{P}^{m*}}\left[\int_0^T\int_U f(t,X_t,\mu^{m*}_t,u)\,Q_t(du)dt+g(X_T,\mu^{m*}_T)+\int_0^T h(t)\,dZ_t\right]\\
            \geq &~ -C\left(1+\int_0^T\int_{\mathbb{R}^d}|x|^p\,\mu^{m*}_t(dx)dt+E^{\mathbb{P}^{m*}}\int_0^T|X_t|^p\,dt+E^{\mathbb{P}^{m*}}\int_0^T\int_U|u|^p\,Q_t(du)dt\right.\\
            &\left.-E^{\mathbb{P}^{m*}}|X_T|^{\bar{p}}+\int_{\mathbb{R}^d}|x|^p\,\mu^{m*}_T(dx)+E^{\mathbb{P}^{m*}}\left|\int_0^Th(t)\,dZ_t\right|\right)~~~(\textrm{by assumption}~\mathcal{A}_3)\\
            \geq &~-C\left(1+\int_0^T\int_{\mathbb{R}^d}|x|^p\,\mu^{m*}_t(dx)dt+E^{\mathbb{P}^{m*}}\int_0^T|X_t|^p\,dt+E^{\mathbb{P}^{m*}}\int_0^T\int_U|u|^p\,Q_t(du)dt\right.\\
            &~\left.+\int_{\mathbb{R}^d}|x|^p\,\mu^{m*}_T(dx)+E^{\mathbb{P}^{m*}}\left|\int_0^Th(t)\,dZ_t\right|-E^{\mathbb{P}^{m*}}|Z_T|^{\bar{p}}\right)~~~(\textrm{by } (\ref{estimate-pm-zt-pbar})).
        \end{split}
    \end{equation*}
Now choose any $\mathbb{P}_0\in\mathcal{R}^m(\mu^{m*})$ such that $\sup_mJ(\mu^{m*},\mathbb{P}_0) < \infty$ (e.g. $\mathbb{P}_0\in\mathcal{R}(\mu^{m,*})$ such that $\mathbb{P}_0(Q|_{[0,T]}\equiv \delta_{\widetilde{u}}(du)dt|_{[0,T]},Z\equiv 0)=1$ for some $\widetilde{u}\in U$). Then,
    \begin{equation}\label{estimate-pm-zt-pbar-(1)}
        \begin{split}
            E^{\mathbb{P}^{m*}}|Z_T|^{\bar{p}}\leq&~ J(\mu^{m*},\mathbb{P}^{m*})+C\left(1+E^{\mathbb{P}^{m*}}\left|\int_0^Th(t)\,dZ_t\right|+E^{\mathbb{P}^{m*}}\int_0^T|X_t|^p\,dt+E^{\mathbb{P}^{m*}}|X_T|^p\right)\\
            \leq&~J(\mu^{m*},\mathbb{P}_0)+C\left(1+E^{\mathbb{P}^{m*}}|Z_T|+E^{\mathbb{P}^{m*}}|Z_T|^p\right)~~~~(\textrm{by }(\ref{estimate-pm-xt-p})\textrm{ and the optimality of }\mathbb{P}^{m*})\\
            \leq &~C\left(1+E^{\mathbb{P}^{m*}}|Z_T|+E^{\mathbb{P}^{m*}}|Z_T|^p\right).
        \end{split}
    \end{equation}
Since the measure $\mathbb{P}^{m*}$ is supported on $\Omega^m$, we see that $E^{\mathbb{P}^{m*}}|Z_T|^{\bar{p}}$ is finite, for each $m$. In order to see that there exists a uniform upper bound on $E^{\mathbb{P}^{m*}}|Z_T|^{\bar{p}}$, notice that, independently of $m$ we can choose $M > 0$ large enough such that
\[	
	E^{\mathbb{P}^{m*}}|Z_T|^{{p_0}} \leq M + \frac{1}{4C} E^{\mathbb{P}^{m*}}|Z_T|^{\bar{p}}\quad (p_0=1,p)
\]
Together with (\ref{estimate-pm-zt-pbar-(1)}) this yields,
    \begin{equation*}
        E^{\mathbb{P}^{m*}}|Z_T|^{\bar{p}}\leq 2C(1+M):=K.
    \end{equation*}
By \cite[Definition 6.8]{Villani-2009} and Proposition \ref{relative-compactness-general-result}, the relative compactness of $\{\mathbb{P}^{m*}\}_{m\geq 1}$ follows.
\end{proof}

The previous lemma shows that the sequence $\{\mathbb{P}^{m*}\}_{m\geq 1}$ has an accumulation point $\mathbb{P}^*$. {Let $\mu^*=\mathbb{P}^*\circ X^{-1}$. Clearly, $\mu^{m*}\rightarrow \mu^*$ in $\mathcal{W}_p$ along a subsequence.} The following result is an immediate corollary to Proposition \ref{admissibility-limit-P-proposition} (see Remark \ref{remark-on-admissibility}).

\begin{lemma}
Suppose that $\mathcal{A}_1$ and $\mathcal{A}_3$-$\mathcal{A}_6$ hold, 
let $\mathbb{P}^*$ be an accumulation point of the sequence $\{\mathbb{P}^{m*}\}_{m\geq 1}$. Then, $\mathbb{P}^*\in\mathcal{R}(\mu^*)$.
\end{lemma}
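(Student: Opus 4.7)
The plan is to deduce the lemma directly from Proposition \ref{admissibility-limit-P-proposition} together with Remark \ref{remark-on-admissibility}, which points out that its proof never invoked the finite fuel bound. Since the hypotheses $\mathcal{A}_1$ and $\mathcal{A}_4$--$\mathcal{A}_6$ needed by that proposition are assumed here (and $\mathcal{A}_3$ is the extra ingredient used upstream in Lemma \ref{boundedness-expectation-pm-zt-pbar} to obtain relative compactness of $\{\mathbb{P}^{m*}\}$), the only thing I have to verify to invoke the proposition is that the sequence of measures $\mu^{m*}$ converges to $\mu^*$ in $\mathcal{W}_{p,(\widetilde{\mathcal{D}}(\mathbb{R}),d_{M_1})}$ along the same subsequence for which $\mathbb{P}^{m*}\to\mathbb{P}^*$ in $\mathcal{W}_p$.

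Concretely, first I would pass to a subsequence (not relabeled) along which $\mathbb{P}^{m*}\to\mathbb{P}^*$ in $\mathcal{W}_p$, which is possible by Lemma \ref{boundedness-expectation-pm-zt-pbar}. Since the coordinate projection $\pi\colon\Omega=\widetilde{\mathcal{D}}(\mathbb{R})\times\widetilde{\mathcal{U}}(\mathbb{R})\times\widetilde{\mathcal{A}}(\mathbb{R})\rightarrow\widetilde{\mathcal{D}}(\mathbb{R})$, $\omega=(x,q,z)\mapsto x$, is Lipschitz continuous with constant $1$ with respect to the natural product metric (and $d_{M_1}(x,0)\leq d_\Omega(\omega,\omega_0)$ for the base point $\omega_0=(0,q_0,0)$), Wasserstein convergence is preserved under the push-forward by $\pi$. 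Hence $\mu^{m*}=\mathbb{P}^{m*}\circ X^{-1}\to\mathbb{P}^*\circ X^{-1}=\mu^*$ in $\mathcal{W}_{p,(\widetilde{\mathcal{D}}(\mathbb{R}),d_{M_1})}$.

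Next, for each $m$ we have $\mathbb{P}^{m*}\in\mathcal{R}^m(\mu^{m*})\subseteq\mathcal{R}(\mu^{m*})$, i.e.\ the controlled martingale problem (\ref{martingale-problem-MFGs}) is satisfied with $\mu$ replaced by $\mu^{m*}$. Combining the convergences $\mu^{m*}\to\mu^*$ and $\mathbb{P}^{m*}\to\mathbb{P}^*$ in the respective Wasserstein spaces, Proposition \ref{admissibility-limit-P-proposition} applies verbatim (in view of Remark \ref{remark-on-admissibility}, no finite-fuel bound is invoked in its proof: the Kolmogorov tightness estimate for the continuous part $Y^n=X-\int_0^\cdot c(s)\,dZ_s$ only uses boundedness of $b$ and $\sigma$, while the passage to the limit in the martingale problem via Skorokhod representation is insensitive to the size of $Z$). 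This yields $\mathbb{P}^*\in\mathcal{R}(\mu^*)$.

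The only substantive point to verify is the preservation of $\mathcal{W}_p$ convergence under the projection; the rest is bookkeeping. I do not expect any genuine obstacle, since the essential analytic work (tightness of $\{\mathbb{P}^{m*}\}$ via uniform $\bar{p}$-moment control of $Z_T$, and the closed-graph property of $\mathcal{R}$) has already been carried out in Lemma \ref{boundedness-expectation-pm-zt-pbar} and Proposition \ref{admissibility-limit-P-proposition}, respectively.
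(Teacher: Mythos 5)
Your proposal is correct and follows exactly the paper's route: the paper also obtains this lemma as an immediate corollary of Proposition \ref{admissibility-limit-P-proposition} together with Remark \ref{remark-on-admissibility}, noting (without your explicit justification via the $1$-Lipschitz projection) that $\mu^{m*}\to\mu^*$ in $\mathcal{W}_p$ along the convergent subsequence. Your added detail on why the push-forward under the coordinate projection preserves Wasserstein convergence is a harmless elaboration of what the paper dismisses as ``clearly.''
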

{The next theorem establish the existence of relaxed MFGs solution to (\ref{model-MFGs-singular-control}) in the general case, i.e. it proves Theorem \ref{existence}.}
\begin{theorem}
Suppose $\mathcal{A}_1$-$\mathcal{A}_6$ hold. Then
$\mathbb{P}^*\in\mathcal{R}^*(\mu^*)$, i.e., for each $\mathbb{P}\in\mathcal{R}(\mu^*)$ it holds that $$J(\mu^*,\mathbb{P}^*)\leq J(\mu^*,\mathbb{P}).$$
\end{theorem}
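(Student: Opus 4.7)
The plan is to combine the lower semi-continuity of $J$ from Remark \ref{joint-LSC-J} along the converging sequence $(\mu^{m*},\mathbb{P}^{m*})\to(\mu^*,\mathbb{P}^*)$ with a two-stage approximation of an arbitrary competitor $\mathbb{P}\in\mathcal{R}(\mu^*)$: first approximate $\mathbb{P}$ by finite-fuel control rules $\mathbb{P}_k\in\mathcal{R}^k(\mu^*)$, then, for each fixed $k$, approximate $\mathbb{P}_k$ by control rules $\mathbb{P}^{m,k}\in\mathcal{R}^k(\mu^{m*})\subseteq\mathcal{R}^m(\mu^{m*})$. The optimality of $\mathbb{P}^{m*}$ within $\mathcal{R}^m(\mu^{m*})$ (Theorem \ref{existence-MFG-finite-fuel}) then transfers to the limit. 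We may assume $J(\mu^*,\mathbb{P})<\infty$, since otherwise the inequality is trivial.

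Step 1 (lower semi-continuity). Applying Remark \ref{joint-LSC-J} to the sequence $(\mu^{m*},\mathbb{P}^{m*})\to(\mu^*,\mathbb{P}^*)$ in $\mathcal{W}_p$ yields
\[
J(\mu^*,\mathbb{P}^*)\leq\liminf_{m\to\infty}J(\mu^{m*},\mathbb{P}^{m*}).
\]

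Step 2 (truncation of the competitor). Define $\tau_k:=\inf\{t\in[0,T]:Z_t\geq k\}\wedge T$ and set $Z^k_t:=Z_{t\wedge\tau_k}$; let $X^k$ solve the same state equation as $X$ with $Z$ replaced by $Z^k$ and the same regular control $Q$. Then $\mathbb{P}_k:=\mathbb{P}\circ(X^k,Q,Z^k)^{-1}\in\mathcal{R}^k(\mu^*)$: the martingale property transfers because after $\tau_k$ the singular driver is frozen while the Brownian and drift dynamics continue under the same $Q$. By $\mathcal{A}_3$--$\mathcal{A}_4$, $J(\mu^*,\mathbb{P})<\infty$ implies $E^{\mathbb{P}}|Z_T|^{\bar p}<\infty$ (by an estimate analogous to \eqref{estimate-pm-zt-pbar-(1)} in Lemma \ref{boundedness-expectation-pm-zt-pbar}), hence $\{|X^k_T|^{\bar p}\}_k$ is uniformly integrable and $Z^k\to Z$, $X^k\to X$ pointwise in $\omega$ as $k\to\infty$. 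Dominated convergence, together with the growth bounds on $f$, $g$, $h$, yields $J(\mu^*,\mathbb{P}_k)\to J(\mu^*,\mathbb{P})$.

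Step 3 (lower hemi-continuity at fixed fuel level). Fix $k$. Since $\mathcal{R}^k$ is lower hemi-continuous in $\mu$ (Corollary \ref{continuity-R}) and $\mu^{m*}\to\mu^*$ in $\mathcal{W}_{p,(\widetilde{\mathcal{D}}(\mathbb{R}),d_{M_1})}$, there exist $\mathbb{P}^{m,k}\in\mathcal{R}^k(\mu^{m*})$ with $\mathbb{P}^{m,k}\to\mathbb{P}_k$ in $\mathcal{W}_p$. Under the fuel constraint $k$, Lemma \ref{continuity-J-mu-P} gives joint continuity of $J$ on $\mathrm{Gr}\,\mathcal{R}^k$, so $J(\mu^{m*},\mathbb{P}^{m,k})\to J(\mu^*,\mathbb{P}_k)$.

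Step 4 (combining). For $m\geq k$ we have $\mathcal{R}^k(\mu^{m*})\subseteq\mathcal{R}^m(\mu^{m*})$, so optimality of $\mathbb{P}^{m*}$ gives $J(\mu^{m*},\mathbb{P}^{m*})\leq J(\mu^{m*},\mathbb{P}^{m,k})$. Chaining Steps 1, 3 and 4,
\[
J(\mu^*,\mathbb{P}^*)\leq\liminf_{m\to\infty}J(\mu^{m*},\mathbb{P}^{m*})\leq\lim_{m\to\infty}J(\mu^{m*},\mathbb{P}^{m,k})=J(\mu^*,\mathbb{P}_k),
\]
and letting $k\to\infty$ using Step 2 gives $J(\mu^*,\mathbb{P}^*)\leq J(\mu^*,\mathbb{P})$.

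The main obstacle is Step 2: verifying that the truncated triple $(X^k,Q,Z^k)$ still solves the martingale problem of Definition \ref{control-rule-mu} (with the same $\mu^*$) and that the cost converges. The former is essentially a matter of noting that stopping $Z$ at $\tau_k$ only removes the $c(s)\,dZ_s$ contribution past $\tau_k$ while the continuous-martingale part from $\sigma$ and the $b$-drift still satisfy their characterization (via Proposition \ref{transfer-X-to-Y}); the latter requires the uniform integrability of $|X^k_T|^{\bar p}$ through $\mathcal{A}_3$, which is precisely where the a priori bound $E^{\mathbb{P}}|Z_T|^{\bar p}<\infty$ enters—and this bound itself must be derived from the assumption $J(\mu^*,\mathbb{P})<\infty$ by re-running the argument of Lemma \ref{boundedness-expectation-pm-zt-pbar} at the fixed measure $\mu^*$.
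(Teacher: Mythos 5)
Your proof is correct and follows the same overall strategy as the paper's: sandwich $J(\mu^*,\mathbb{P}^*)$ between $\liminf_m J(\mu^{m*},\mathbb{P}^{m*})$ (via the lower semi-continuity of Remark \ref{joint-LSC-J}) and the limiting cost of approximating competitors lying in $\mathcal{R}^m(\mu^{m*})$, then invoke the optimality of $\mathbb{P}^{m*}$. The difference is in how those competitors are built. The paper does it in one step: it represents $\mathbb{P}$ via Proposition \ref{result-from-Karoui} as the law of a solution to an SDE driven by a martingale measure, caps the singular control at level $m$, simultaneously replaces $\mu^*$ by $\mu^{m*}$ in the coefficients, and solves the resulting SDE strongly, obtaining $\mathbb{P}^m\in\mathcal{R}^m(\mu^{m*})$ directly; the convergence $J(\mu^{m*},\mathbb{P}^m)\to J(\mu^*,\mathbb{P})$ is then a Gronwall/BDG estimate. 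You instead split the approximation into a fuel truncation at fixed $\mu^*$ followed by an appeal to lower hemi-continuity of $\mathcal{R}^k$ (Corollary \ref{continuity-R}) and joint continuity of $J$ on $\mathrm{Gr}\,\mathcal{R}^k$ (Lemma \ref{continuity-J-mu-P}); this is more modular and reuses the finite-fuel machinery wholesale, at the price of a double limit in $k$ and $m$ and of obtaining the approximating sequence only along a subsequence (harmless here, since one may pass to subsequences before taking the $\liminf$). Two small repairs are needed. First, $Z^k_t:=Z_{t\wedge\tau_k}$ does not satisfy $Z^k_T\leq k$ when $Z$ jumps across level $k$ at $\tau_k$, so $\mathbb{P}_k$ need not lie in $\mathcal{R}^k(\mu^*)$; use the paper's capping ($Z^k_t=Z_t$ for $t<\tau_k$ and $Z^k_t=k$ for $t\geq\tau_k$). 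Second, the convergence $X^k\to X$ is not merely ``pointwise in $\omega$'': $X^k$ solves an SDE whose drift and diffusion depend on $X^k$ itself, so you need the $L^1(\sup)$ estimate on $\int_0^{\cdot} c(s)\,d(Z_s-Z^k_s)$ together with Gronwall and BDG, exactly as in \eqref{approximation-to-0-integral-c-zm-c-z}--\eqref{convergence-X-m-to-X-(4)}.
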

\begin{proof}
It is sufficient to prove that $J(\mu^*,\mathbb{P}^*)\leq J(\mu^*,\mathbb{P})$ for each $\mathbb{P}\in\mathcal{R}(\mu^*)$ with $J(\mu^*,\mathbb{P})<\infty$.

By Proposition \ref{result-from-Karoui}, there exists a filtered probability space $(\bar{\Omega},\bar{\mathcal{F}},\bar{\mathcal{F}}_t,\bar{\mathbb{P}})$ on which random variables $(\bar{X},\bar{Q},\bar{Z},M)$ are defined such that $\mathbb{P}=\bar{\mathbb{P}}\circ (\bar{X},\bar{Q},\bar{Z})^{-1}$ and
    \begin{equation}\label{X-bar-SDE}
        d\bar{X}_t=\int_U b(t,\bar{X}_t,{\mu}^*_t,u)\,\bar{Q}_t(du)dt+\int_U \sigma(t,\bar{X}_t,{\mu}^*_t,u)M(du,dt)+c(t)\,d\bar{Z}_t,
    \end{equation}
where $M$ is a martingale measure with intensity $\bar{Q}$.
Using the same argument as in the proof of Lemma \ref{boundedness-expectation-pm-zt-pbar} we see that,
    \begin{equation}\label{boundedness-expectation-Pbar-zt-pbar}
        E^{{\mathbb{P}}}{Z}_T^{\bar{p}}=E^{\bar{\mathbb{P}}}\bar{Z}_T^{\bar{p}}<\infty.
    \end{equation}
Define $\mathbb{P}^m=\bar{\mathbb{P}}\circ (\bar{X}^m,\bar{Q},\bar{Z}^m)\in\mathcal{R}^m(\mu^{m*})$, such that $\bar{X}^m$ is the unique strong solution to
    \begin{equation}\label{X-bar-m-SDE}
        d\bar{X}^m_t=\int_U b(t,\bar{X}^m_t,{\mu}^{m*}_t,u)\,\bar{Q}_t(du)dt+\int_U \sigma(t,\bar{X}^m_t,{\mu}^{m*}_t,u)M(du,dt)+c(t)\,d\bar{Z}^m_t,
    \end{equation}
where for each $\bar{\omega}\in\bar{\Omega}$,
\begin{equation*}
        \bar{Z}^m_t(\bar{\omega})=\left\{
        \begin{array}{ll}
        \bar{Z}_t(\bar{\omega}),&\textrm{ if }t<\tau^m(\bar{\omega})\\
        m,&\textrm{ if }t\geq \tau^m(\bar{\omega}),\\
        \end{array}
        \right.
\end{equation*}
with $\tau^m(\bar{\omega})=\inf\{t:\bar{Z}_t{(\bar{\omega})}>m\}$. Similarly, we can define $Z^m$.
Furthermore, if ${Z}$ is $\widetilde{\mathcal{A}}^m(\mathbb{R})$ valued, we have ${Z}={Z}^m$. Hence,
    \begin{equation}
        \begin{split}
            &~E^{\bar{\mathbb{P}}}\sup_{0\leq t\leq T}\left|\int_0^t c(s)\,d\bar{Z}_s-\int_0^tc(s)\,d\bar{Z}^m_s\right|\\
            =&~E^{{\mathbb{P}}}\sup_{0\leq t\leq T}\left|\int_0^t c(s)\,d{Z}_s-\int_0^tc(s)\,d{Z}^m_s\right|\\
            =&~\int_{\widetilde{\mathcal{A}}(\mathbb{R})\backslash\widetilde{\mathcal{A}}^m(\mathbb{R})}\sup_{0\leq t\leq T}\left|\int_0^tc(s)\,d{Z}_s({\omega})-\int_0^tc(s)\,d{Z}^m_s({\omega})\right|{\mathbb{P}}(d{\omega}).
        \end{split}
    \end{equation}
 By H\"older's inequality,
    \begin{equation*}
    \begin{split}
        &~\int_{\widetilde{\mathcal{A}}(\mathbb{R})\backslash\widetilde{\mathcal{A}}^m(\mathbb{R})}\sup_{0\leq t\leq T}\left|\int_0^tc(s)\,d{Z}_s(\omega)-\int_0^tc(s)\,d{Z}^m_s({\omega})\right|{\mathbb{P}}(d{\omega})\\
        \leq&~\left|\int_{\widetilde{\mathcal{A}}(\mathbb{R})\backslash\widetilde{\mathcal{A}}^m(\mathbb{R})}\int_0^Tc(t)\,d{Z}_t({\omega}){\mathbb{P}}(d{\omega})+\int_{\widetilde{\mathcal{A}}(\mathbb{R})\backslash\widetilde{\mathcal{A}}^m(\mathbb{R})}\int_0^Tc(t)\,d{Z}^m_t({\omega}){\mathbb{P}}(d{\omega})\right|\\
        \leq &~ C \left(E^{{\mathbb{P}}}{Z}_T^{{p}}\right)^{\frac{1}{{p}}}{\mathbb{P}}(\widetilde{\mathcal{A}}(\mathbb{R})\backslash\widetilde{\mathcal{A}}^m(\mathbb{R}))^{1-\frac{1}{{p}}}+C \left(E^{{\mathbb{P}}}({Z}^m_T)^{{p}}\right)^{\frac{1}{{p}}}{\mathbb{P}}(\widetilde{\mathcal{A}}(\mathbb{R})\backslash\widetilde{\mathcal{A}}^m(\mathbb{R}))^{1-\frac{1}{{p}}}\\
        \leq &~ C \left(E^{{\mathbb{P}}}{Z}_T^{{p}}\right)^{\frac{1}{{p}}}{\mathbb{P}}(\widetilde{\mathcal{A}}(\mathbb{R})\backslash\widetilde{\mathcal{A}}^m(\mathbb{R}))^{1-\frac{1}{{p}}}.
    \end{split}
    \end{equation*}
Since $\widetilde{\mathcal{A}}^m(\mathbb{R})\uparrow \widetilde{\mathcal{A}}(\mathbb{R})$ implies ${\mathbb{P}}(\widetilde{\mathcal{A}}(\mathbb{R})\backslash\widetilde{\mathcal{A}}^m(\mathbb{R}))\rightarrow 0$ we get,
    \begin{equation}\label{approximation-to-0-integral-c-zm-c-z}
        E^{\bar{\mathbb{P}}}\sup_{0\leq t\leq T}\left|\int_0^t c(s)\,d\bar{Z}_s-\int_0^tc(s)\,d\bar{Z}^m_s\right|\rightarrow 0.
    \end{equation}
Similarly,
    \begin{equation}\label{approximation-to-0-integral-h-zm-h-z}
        E^{\bar{\mathbb{P}}}\left|\int_0^T h(t)\,d\bar{Z}_t-\int_0^Th(t)\,d\bar{Z}^m_t\right|\rightarrow 0.
    \end{equation}
By (\ref{X-bar-SDE}), (\ref{X-bar-m-SDE}) and \eqref{approximation-to-0-integral-c-zm-c-z}, the Lipschitz continuity of $b$ and $\sigma$ in $x$ and $\mu$ and the Burkholder-Davis-Gundy inequality, standard estimate of SDE yields that
    \begin{equation}\label{convergence-X-m-to-X-(4)}
        \lim_{m\rightarrow\infty}E^{\bar{\mathbb{P}}}\sup_{0\leq t\leq T}\left|\bar{X}^m_t-\bar{X}_t\right|= 0.
    \end{equation}
By (\ref{approximation-to-0-integral-h-zm-h-z}), (\ref{convergence-X-m-to-X-(4)}), $\mu^{m*}\rightarrow\mu^*$ in $\mathcal{W}_{p,(\widetilde{\mathcal{D}}(\mathbb{R}),d_{M_1})}$ and the same arguments as in the proof of Lemma \ref{continuity-J-mu-P}, we get
    \begin{equation*}
    \begin{split}
       &~ E^{\bar{\mathbb{P}}} \left(\int_0^T f(t,\bar{X}^m_t,\mu^{m*}_t,u)\,\bar{Q}_t(du)dt+g(\bar{X}^m_T,\mu^{m*}_T)+\int_0^Th(t)\,d\bar{Z}^m_t\right)\\
       \rightarrow&~E^{\bar{\mathbb{P}}} \left(\int_0^T f(t,\bar{X}_t,\mu^*_t,u)\,\bar{Q}_t(du)dt+g(\bar{X}_T,\mu^*_T)+\int_0^Th(t)\,d\bar{Z}_t\right).
    \end{split}
    \end{equation*}
This shows that
\[
	J(\mu^{m*},\mathbb{P}^m)\rightarrow J(\mu^*,\mathbb{P}).
\]
%
Moreover, by Remark \ref{joint-LSC-J}, $\liminf_{m\rightarrow\infty}J(\mu^{m*},\mathbb{P}^{m*})\geq J(\mu^*,\mathbb{P}^*)$.
Hence,
    \[
        J(\mu^*,\mathbb{P})= \lim_{m\rightarrow\infty}J(\mu^{m*},\mathbb{P}^m)\geq \liminf_{m\rightarrow\infty}J(\mu^{m*},\mathbb{P}^{m*})\geq J(\mu^*,\mathbb{P}^*).
    \]
\end{proof}
\subsection{Related McKean-Vlasov stochastic singular control problem}\label{McKean-Vlasov-control}
MFGs and control problems of McKean-Vlasov type are compared in \cite{carmona-delarue-lachapelle}. The literatures on McKean-Vlasov singular control focus on necessary conditions for optimality; the existence of optimal control is typically assumed; see e.g. \cite{hu-oksendal-sulem-2014}. With the above method for MFGs, we can also establish the existence of an optimal control to the following McKean-Vlasov stochastic singular control problem:
\begin{equation}\label{cost-MV}
    \min_{u,Z} J(u,Z)=\min_{u,Z} E\left[\int_0^T f(t,X_t,Law(X_t),u_t)\,dt+g(X_T,Law(X_T))+\int_0^T h(t)\,dZ_t\right]
\end{equation}
subject to
\begin{equation}\label{state-MV}
\begin{split}
    dX_t=b(t,X_t,Law(X_t),u_t)\,dt+\sigma(t,X_t,Law(X_t),u_t)\,dW_t+c(t)\,dZ_t, ~t\in[0,T].
\end{split}
\end{equation}
To this end, we need to introduce relaxed controls and control rules similar to Section \ref{definition-assumption}.

%
\begin{definition}\label{def-relaxed-control-MV}
We call $(\Omega,\mathcal{F},\{\mathcal{F}_t,t\in\mathbb{R}\},\mathbb{P},X,\underline{Q},Z)$ a relaxed control to McKean-Vlasov stochastic singular control problem (\ref{cost-MV})-(\ref{state-MV}) if it satisfies items 1, 2 and 3 in Definition \ref{relaxed-control} and
\begin{itemize}
\item[4'] $\left(\mathcal{M}^{\mathbb{P},\phi},\{\mathcal{F}_t,t\geq 0\},\mathbb{P}\right)$ is a well defined continuous martingale, where
    \begin{equation}\label{martingale-MV}
    \begin{split}
        \mathcal{M}^{\mathbb{P},\phi}_t=&~\phi(X_t)-\int_0^t\int_U\phi'(X_s)b(s,X_s,\mathbb{P}\circ X_s^{-1},u)\,\underline{Q}_s(du)ds \\
        &-\frac{1}{2}\int_0^t\int_U\phi'(X_s)a(s,X_s,\mathbb{P}\circ X_s^{-1},u)\,\underline{Q}_s(du)ds\\
        &-\int_0^t\phi'(X_{s-})c(s)\,dZ_s-\sum_{0\leq s\leq t}\left(\phi(X_{s})-\phi(X_{s-})-\phi'(X_{s-})\Delta X_s\right),~t\in[0,T].
    \end{split}
    \end{equation}
   \end{itemize}
\end{definition}
For each relaxed control $r=(\Omega,\mathcal{F},\{\mathcal{F}_t,t\in\mathbb R\},\mathbb{P},X,\underline{Q},Z)$, we define the corresponding cost functional by
\begin{equation}\label{cost-relaxed-control-control-rule}
    J(r)=E^{\mathbb{P}}\left[\int_0^T\int_U f\left(t,X_t,\mathbb{P}\circ X^{-1}_t,u\right)\,\underline{Q}_t(du)dt+g\left(X_T,\mathbb{P}\circ X^{-1}_T\right)+\int_0^T h(t)\,dZ_t\right].
\end{equation}
We still denote by $\Omega:=\widetilde{\mathcal D}(\mathbb R)\times\widetilde{\mathcal U}(\mathbb R)\times\widetilde{\mathcal A}(\mathbb R)$ the canonical space, $\mathcal F_t$ the canonical filtration and $(X,Q,Z)$ the coordinate projections with the associated predictable disintegration $Q^o$, as introduced in Section \ref{definition-assumption}. The notion of control rules can be defined similarly as that in Definition \ref{control-rule}. Denote by $\mathcal{R}$ all the control rules. For $\mathbb{P}\in\mathcal{R}$, the corresponding cost functional is defined as in (\ref{cost-relaxed-control-control-rule}).

Using straightforward modifications of arguments given in the proof of \cite[Proposition 2.6]{Haussmann-Suo-1995} we see that our
%
 optimization problems over relaxed controls and over control rules are equivalent. Once the optimal control rule is established, under the same additional assumption as in Remark \ref{existence-strict-MFG}, we can establish a strict optimal control from the optimal control rule. 
The next two theorems prove the existence of an optimal control under a finite-fuel constraint. The existence results can then be extended to the general unconstraint case. We do not give a formal proof as the arguments are exactly the same as in the preceding subsection.
\begin{theorem}
Suppose $\mathcal{A}_4$, $\mathcal{A}_5$ hold and $\mathcal{A}_1$ holds without Lipschitz continuity of $b$ and $\sigma$ on $x$. Under a finite-fuel constraint on the singular controls,
$\mathcal{R}\neq\emptyset$.
\end{theorem}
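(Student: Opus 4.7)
The plan is to recycle the Kakutani--Fan--Glicksberg fixed-point machinery from the proof of Theorem \ref{existence-MFG-finite-fuel}, but applied to the raw correspondence of control rules rather than to the optimal ones, since here we only need nonemptiness of $\mathcal{R}$ (not optimality). Concretely, for each frozen $\mu\in\mathcal{P}_p(\widetilde{\mathcal{D}}(\mathbb{R}))$, let $\mathcal{R}(\mu)$ denote the set of control rules in the MFG sense of Definition \ref{control-rule-mu}, and define
\[
    \psi:\mathcal{P}_p(\widetilde{\mathcal{D}}(\mathbb{R}))\to 2^{\mathcal{P}_p(\widetilde{\mathcal{D}}(\mathbb{R}))},\qquad \psi(\mu)=\{\mathbb{P}\circ X^{-1}:\mathbb{P}\in\mathcal{R}(\mu)\}.
\]
Any fixed point $\mu^*\in\psi(\mu^*)$ produces $\mathbb{P}^*\in\mathcal{R}(\mu^*)$ with $\mu^*_t=\mathbb{P}^*\circ X_t^{-1}$, at which point the frozen martingale problem \eqref{martingale-problem-MFGs} coincides with the McKean--Vlasov martingale problem \eqref{martingale-MV}, so $\mathbb{P}^*\in\mathcal{R}$.

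The first step is to verify $\mathcal{R}(\mu)\neq\emptyset$ for each $\mu$. Under a finite-fuel constraint, I would exhibit a concrete element: fix $\widetilde{u}\in U$, take $\underline{Q}_t(du)\equiv\delta_{\widetilde{u}}(du)$ and $Z\equiv 0$. Under $\mathcal{A}_1$ (without Lipschitzness), the coefficients $b(\cdot,\cdot,\mu_\cdot,\widetilde{u})$ and $\sigma(\cdot,\cdot,\mu_\cdot,\widetilde{u})$ are bounded and continuous in $x$, so a weak solution of the resulting SDE exists by classical Skorokhod--Stroock--Varadhan weak-existence (cf.~\cite[Section 5.4]{Karatzas-Shreve-1991}, invoked already in Corollary \ref{existence-singular-control}). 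The corresponding law on the canonical space belongs to $\mathcal{R}(\mu)$.

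The second step is to assemble the hypotheses of the Kakutani--Fan--Glicksberg theorem. Convexity of $\mathcal{R}(\mu)$, and hence of $\psi(\mu)$, is immediate from the linearity of the martingale problem \eqref{martingale-problem-MFGs} in $\mathbb{P}$ and the fact that pushforward is affine. The closed-graph (upper hemi-continuity) property is exactly Proposition \ref{admissibility-limit-P-proposition} together with Remark \ref{remark-on-admissibility}; inspection of its proof shows it relies on $\mathcal{A}_1$, $\mathcal{A}_4$--$\mathcal{A}_6$ and the $\mu$-Lipschitz assumption $\mathcal{A}_5$, but \emph{not} on Lipschitzness of $b,\sigma$ in $x$. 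The uniform tightness bound \eqref{boundedness-expectation-any-order} and the modified oscillation control \eqref{result-from-Markov-inequality} from Proposition \ref{relative-compactness-general-result} depend only on boundedness of $b$ and $\sigma$ and the finite-fuel constraint, so the compact convex set
\[
    S=\left\{\mathbb{P}\in\mathcal{P}_p(\widetilde{\mathcal{D}}(\mathbb{R})):\mathbb{P}(\widetilde{w}_s(X,\delta)>\eta)\le k(\delta)/\eta,\ E^{\mathbb{P}}\sup_{t}|X_t|^{\bar p}\le C\right\}
\]
constructed in the proof of Theorem \ref{existence-MFG-finite-fuel} still satisfies $\psi(\overline{S})\subseteq\overline{S}$. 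An application of \cite[Corollary 17.55]{AB-2006} then produces the desired fixed point.

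The main obstacle I anticipate is verifying $\mathcal{R}(\mu)\neq\emptyset$ cleanly without Lipschitzness in $x$: strong existence is unavailable, so the argument must go through a weak-solution construction on the canonical space (or, equivalently, through the martingale problem), and one must confirm that the natural candidate $(\delta_{\widetilde{u}},0)$ yields a bona fide control rule in the Definition \ref{control-rule-mu} sense, in particular that the decomposition into a continuous martingale part via Proposition \ref{transfer-X-to-Y} is realized (which is trivial since $Z\equiv 0$, so $X=Y$). Everything else is a bookkeeping transfer of the arguments from Section \ref{section-finite-fuel}.
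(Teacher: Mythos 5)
Your proposal is correct and takes essentially the same route as the paper's proof: both apply Kakutani--Fan--Glicksberg to the correspondence $\mu\mapsto\{\mathbb{P}\circ X^{-1}:\mathbb{P}\in\mathcal{R}(\mu)\}$, obtaining nonemptiness from weak existence for the frozen martingale problem, convexity from linearity, upper hemi-continuity from Proposition \ref{admissibility-limit-P-proposition} (via Corollary \ref{continuity-R}, which indeed needs no Lipschitzness in $x$ for that direction), and the invariant compact convex set from the finite-fuel tightness estimates of Theorem \ref{existence-MFG-finite-fuel}. Your explicit construction of an admissible pair $(\delta_{\widetilde{u}},0)$ and the identification of the fixed point with a McKean--Vlasov control rule only make explicit what the paper leaves implicit.
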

\begin{proof}
For each $\mu\in\mathcal{P}_p(\widetilde{\mathcal{D}}(\mathbb{R}))$, there exists a solution to the martingale problem $\mathcal{M}^{\mu,\phi}$, where $\mathcal{M}^{\mu,\phi}$ is defined in \eqref{martingale-problem-MFGs}.
Thus, we define a set-valued map $\Phi$ on $\mathcal{P}_p(\widetilde{\mathcal{D}}(\mathbb{R}))$ with non-empty convex images by
    \[
        \Phi: \mu\rightarrow \{\mathbb{P}\circ X^{-1}: \mathbb P\in\mathcal R(\mu)\},
    \]
where $\mathcal R(\mu)$ is the control rule with $\mu$ as in the previous section.

The compactness of $\Phi(\mu)$ for each $\mu\in\mathcal P_p(\widetilde{\mathcal D}(\mathbb R))$ and the upper hemi-continuity of $\Phi$ are results of the compactness of $\mathcal R(\mu)$ for each $\mu\in\mathcal P_p(\widetilde{\mathcal D}(\mathbb R))$ and upper hemi-continuity of $\mathcal R(\cdot)$, respectively, which are direct results of Corollary \ref{continuity-R}.\footnote{{Note that we only need upper hemi-continuity of $\mathcal R(\cdot)$, so Lipschtiz assumptions on $b$ and $\sigma$ are not necessary.}} By analogy to the proof of Theorem \ref{existence-MFG-finite-fuel} we can define a non-empty, compact, convex set $\bar S \subset \mathcal{P}_p(\widetilde{\mathcal{D}}(\mathbb{R}))$ such that $\Phi: \bar{S} \to 2^{\bar S}$. Hence, $\Phi$ has a fixed point, due to \cite[Corollary 17.55]{AB-2006}.
%
\end{proof}
\begin{theorem}\label{existence-MV-control}
Suppose $\mathcal{A}_3$-$\mathcal{A}_6$ hold {and that $\mathcal{A}_1$ holds without Lipschitz assumptions on $b$ and $\sigma$ in $x$, and that $\mathcal{A}_2$ holds with the continuity of $f$ and $g$ being replaced by lower semi-continuity.}
Under a finite-fuel constraint, there exist an optimal control rule, that is, there exists $\mathbb{P}^*\in\mathcal{R}$ such that $$J(\mathbb{P}^*)\leq J(\mathbb{P}) \quad \mbox{for all} \quad \mathbb{P}\in\mathcal{R}.$$
\end{theorem}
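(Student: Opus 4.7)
The plan is to prove the theorem by a direct Weierstrass-type argument in the spirit of the MFG existence proof under the finite-fuel constraint: show that the feasible set $\mathcal{R}$ is compact in $\mathcal{W}_p$ and that $J$ is lower semi-continuous on $\mathcal{R}$, so that the infimum is attained. The key observation is that for a McKean--Vlasov control rule $\mathbb{P}$, writing $\mu := \mathbb{P}\circ X^{-1}$, one automatically has $\mathbb{P}\in\mathcal{R}(\mu)$ in the MFG sense of Definition \ref{control-rule-mu}, so the MFG machinery of Section \ref{section-finite-fuel} is directly applicable.

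First, I would take a minimizing sequence $\{\mathbb{P}^n\}\subset\mathcal{R}$, set $\mu^n := \mathbb{P}^n\circ X^{-1}$, and invoke Lemma \ref{relative-compactness-union-control-rule} to get relative compactness of $\{\mathbb{P}^n\}$ in $\mathcal{W}_{p}$; note that the finite-fuel constraint replaces the use of $c>0$ here, as in the proof of Theorem \ref{existence-MFG-finite-fuel}. Passing to a subsequence, $\mathbb{P}^n\to\mathbb{P}^*$ and by continuity of the coordinate projection $\pi_X$ on $\Omega^m$ we also have $\mu^n \to \mu^* := \mathbb{P}^*\circ X^{-1}$ in $\mathcal{W}_{p,(\widetilde{\mathcal{D}}(\mathbb{R}),d_{M_1})}$.

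Second, I would apply Proposition \ref{admissibility-limit-P-proposition} (which, as indicated in Remark \ref{remark-on-admissibility}, is not sensitive to the finite fuel constraint; moreover an inspection of its proof shows that the Lipschitz-in-$x$ assumption on $b,\sigma$ is never used, only continuity in $x$ and the Lipschitz-in-$\mu$ hypothesis $\mathcal{A}_5$) to conclude $\mathbb{P}^*\in\mathcal{R}(\mu^*)$. Since $\mu^* = \mathbb{P}^*\circ X^{-1}$, this is precisely the defining property of a McKean--Vlasov control rule, so $\mathbb{P}^*\in\mathcal{R}$. Hence $\mathcal{R}$ is closed in $\mathcal{W}_p$, and combined with the relative compactness above it is compact.

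Third, I would establish that $J$ is lower semi-continuous on $\mathcal{R}$. Writing $J(\mathbb{P}) = \int_\Omega \mathcal{J}(\mathbb{P}\circ X^{-1},\omega)\,\mathbb{P}(d\omega)$ with $\mathcal{J}$ as in \eqref{calJ} (but with the McKean--Vlasov self-consistency), I would follow Remark \ref{joint-LSC-J}: first truncate via $g_K := g\wedge K$, form $\mathcal{J}_K$, and repeat the three-step argument of Lemma \ref{continuity-J-mu-P}. Under the relaxed hypotheses, the uniform-in-$\omega$ convergence \eqref{uniform-continuity-f-mu} is replaced by a uniform-in-$\omega$ liminf inequality (from lower semi-continuity of $f$ in $\nu$ together with $\mathcal{A}_5$, $f$ being bounded from below on bounded sets by $\mathcal{A}_3$), while the lsc of $f(t,\cdot,\mu_t,\cdot)$ and of $g_K(\cdot,\mu_T)$ combined with the weak convergence of $\mathbb{P}^n$ yield a Fatou-type bound $\liminf_n E^{\mathbb{P}^n}\mathcal{J}_K(\mu^n,\omega) \geq E^{\mathbb{P}^*}\mathcal{J}_K(\mu^*,\omega)$. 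Monotone convergence as $K\uparrow\infty$, controlled by the uniform $\bar{p}$-moment bound from \eqref{boundedness-expectation-any-order}, yields $\liminf_n J(\mathbb{P}^n)\geq J(\mathbb{P}^*)$. Combined with $J(\mathbb{P}^n)\to \inf_\mathcal{R} J$ this gives $J(\mathbb{P}^*) = \inf_{\mathbb{P}\in\mathcal{R}} J(\mathbb{P})$.

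The main obstacle is the third step: passing from the continuity proof of Lemma \ref{continuity-J-mu-P} to a lower semi-continuity proof when $f,g$ are only lsc and $b,\sigma$ are not Lipschitz in $x$. The delicate points are (i) controlling the terminal cost $g$ via the truncation $g_K$ despite the $|x_T|^{\bar p}$ growth allowed by $\mathcal{A}_3$, which requires the uniform $\bar p$-moment bound along the minimizing sequence (available since $b,\sigma$ are bounded and $Z\in\widetilde{\mathcal{A}}^m(\mathbb{R})$), and (ii) separating the dependence on $\mu$ from the dependence on $\omega$ in the lsc arguments, since here $\mu^n$ and $\mathbb{P}^n$ move together and we do not have continuous dependence but only joint lsc of $(\mu,\omega)\mapsto\mathcal{J}_K(\mu,\omega)$.
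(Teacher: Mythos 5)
Your overall architecture — compactness of $\mathcal{R}$ plus lower semi-continuity of $J$ plus Weierstrass — is exactly the paper's, and your compactness step matches the paper's verbatim: the paper also reduces to Corollary \ref{continuity-R} (i.e.\ Lemma \ref{relative-compactness-union-control-rule} for relative compactness and Proposition \ref{admissibility-limit-P-proposition} for closedness), and your observations that a McKean--Vlasov control rule is an element of $\mathcal{R}(\mathbb{P}\circ X^{-1})$ and that the Lipschitz-in-$x$ hypothesis is not needed there are both correct (the paper makes the latter point in a footnote). Where you diverge is the lower semi-continuity of $J$. The paper's device is to approximate the lsc functions $f$ and $g$ \emph{from below} by increasing sequences of \emph{continuous} functions $f_N, g_N$; for each fixed $N$ the full continuity machinery of Lemma \ref{continuity-J-mu-P} (and the truncation of Remark \ref{joint-LSC-J} to handle the $|x|^{\bar p}$ growth) applies without change, and monotone convergence in $N$ then delivers $\liminf_n J(\mathbb{P}^n)\geq J(\mathbb{P}^*)$. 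You instead keep the lsc integrands and try to push a direct Fatou/Portmanteau argument through the three steps of Lemma \ref{continuity-J-mu-P}. That is workable in principle, but it is strictly harder than the paper's route and is precisely where your sketch is thinnest: Step 2 of Lemma \ref{continuity-J-mu-P} uses $\sup_{u\in U}|f(t,x^n_t,\mu_t,u)-f(t,x_t,\mu_t,u)|\to 0$ and the stable convergence $q^n\to q$ for a \emph{continuous bounded} integrand, and with $f$ only lsc you must replace both by a Portmanteau-type inequality for stable convergence of lsc integrands bounded below; likewise for $g$ you must handle the joint $(x,\nu)$-dependence with no Lipschitz hypothesis in $\nu$ ($\mathcal{A}_5$ covers $f$ but not $g$), which forces a joint-lsc Portmanteau argument on the product space for the pairs $(\mathbb{P}^n,\mu^n_T)$. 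None of these steps is false, but none is supplied, and the below-approximation trick avoids all of them at once — which is why the paper uses it. Also note a small slip: $\mathcal{A}_5$ is still assumed in this theorem, so $f$ remains locally Lipschitz in $\nu$; your appeal to ``lower semi-continuity of $f$ in $\nu$'' is not where the difficulty lies — the difficulty is the lsc in $(x,u)$ for $f$ and in $(x,\nu)$ for $g$.
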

\begin{proof}
It is sufficient to prove $\mathcal{R}$ is compact and $J$ is lower semi-continuous. The former one can be achieved by the same way to Corollary \ref{continuity-R}. As for the lower semi-continuity, note that $f$ and $g$ can be approximated by continuous functions $f_N$ and $g_N$ increasingly. For $f_N$ and $g_N$, by the same way as that in the proof of Lemma \ref{continuity-J-mu-P}, one has
\begin{equation*}
    \begin{split}
    &~\liminf_{n\rightarrow\infty}E^{\mathbb P^n}\left[\int_0^T\int_U f_N(t,X_t,\mathbb P^n\circ X^{-1}_t,u)\,Q_t(du)dt+g_N(X_T,\mathbb P^n\circ X^{-1}_T)+\int_0^T h(t)\,dZ_t\right]\\
    \rightarrow &~E^{\mathbb P}\left[\int_0^T\int_U f_N(t,X_t,\mathbb P\circ X^{-1}_t,u)\,Q_t(du)dt+g_N(X_T,\mathbb P\circ X^{-1}_T)+\int_0^T h(t)\,dZ_t\right].
    \end{split}
\end{equation*}
Thus, monotone convergence implies the lower semi-continuity of $J$.
\end{proof}

\section{MFGs with regular controls and MFGs with singular controls}

In this section we establish two approximation results for a class of MFGs with singular controls under finite-fuel constraints. For the reasons outlined in Remark \ref{remark-on-discontinuity-T} below we restrict ourselves to MFGs without terminal cost or singular control cost. More precisely, we consider MFGs with singular controls of the form:
  \begin{equation} \label{MFG1}
          \left\{
        \begin{array}{ll}
        1.&\textrm{ fix a deterministic measure }\mu\in\mathcal{P}_p(\widetilde{\mathcal{D}}_{0,T+\epsilon}(\mathbb{R}));\\
        2.&\textrm{ solve the corresponding stochastic singular control problem}:\\
          &\inf_{u,Z } E\left[\int_0^Tf(t,X_t,{\mu}_t,u_t)dt\right] \\
        &\textrm{subject to} \\
        &dX_t=b(t,X_t,{\mu}_t,u_t)\,dt+\sigma(t,X_t,{\mu}_t,u_t)dW_t+c(t)\,dZ_t,~t\in[0,T+\epsilon];\\
        3.&\textrm{solve the fixed point problem: }{\mu={Law}(X),}
        \end{array} \right.
    \end{equation}
   for some fixed $\epsilon>0$ under the finite-fuel constraint $Z\in \widetilde{{\mathcal{A}}}^m_{0,T}(\mathbb{R})$. The reason we define the state process on the time interval $[0,T+\epsilon]$ is that we approximate the singular controls by absolutely continuous ones that are most naturally regarded as elements of $\widetilde{\cal D}_{0,T+\epsilon}({\mathbb R})$ rather than $\widetilde{\cal D}_{0,T}({\mathbb R})$.

%

{\subsection{Solving MFGs with singular controls using MFGs with regular controls}\label{Sec4.1}}\label{regular-approximate-singular}
In this section we establish an approximation of (relaxed) solutions results for the MFGs (\ref{MFG1}) under a finite-fuel constraint by (relaxed) solutions to MFGs with only regular controls.
To this end, we associate with each singular control $Z\in\widetilde{{\mathcal{A}}}^m_{0,T}(\mathbb{R})$ the sequence of absolutely continuous controls
    \begin{equation}\label{approximation-of-singular-control}
        Z^{[n]}_t=n\int_{(t-\frac{1}{n})}^t Z_s\,ds \quad \left(t\in\mathbb{R}, n \in \mathbb{N} \right).
    \end{equation}
  Then, $Z^{[n]}\in\widetilde{\cal A}^m_{0,T+\epsilon}({\mathbb R})$ for all sufficiently large $n \in \mathbb N$. Since each $Z^{[n]}$ is absolutely continuous and $Z$ is c\`adl\`ag we cannot expect convergence of $Z^n$ to $Z$ in the Skorokhod $J_1$ topology in general. However, by Proposition \ref{characterization-convergence-M1} (3.) and the discussion before Proposition \ref{modified-M1} we do know that
 \[
 	Z^{[n]} \to Z \quad \mbox{a.s. in} \quad \left(\widetilde{\cal D}_{0,T+\epsilon}({\mathbb R}), d_{M_1} \right).
\]

   For each $n$, we consider the following finite-fuel constrained MFGs denoted by \textbf{MFG$^{[n]}$}:
    \begin{equation}\label{cost-fun-n}
          \left\{
        \begin{array}{ll}
        1.&\textrm{ fix a deterministic measure }\mu\in\mathcal{P}_p(\widetilde{\mathcal{D}}_{0,T+\epsilon}(\mathbb R));\\
        2.&\textrm{ solve the corresponding stochastic control problem}:\\
          &\inf_{u,Z}E\left[\int_0^Tf(t,X^{[n]}_t,{\mu}_t,u_t)dt\right] \\
          &\textrm{subject to}\\
	      &dX^{[n]}_t=b(t,X^{[n]}_t,{\mu}_t,u_t)dt+\sigma(t,X^{[n]}_t,{\mu}_t,u_t)dW_t+c(t)\,dZ^{[n]}_t,~ t\in[0,T+\epsilon] \\
	      &X^{[n]}_{0} = 0 \\
	      &Z^{[n]}_t=n\int_{(t-\frac{1}{n})}^t Z_s\,ds;\\
        3. &\textrm{solve the fixed point problem: }{\mu={Law}(X^{[n]})}.
    \end{array} \right.
	\end{equation}
\begin{definition}\label{relaxed-control-zn}
We call the vector $r^n=(\Omega,\mathcal{F},\{\mathcal{F}_t,t\in\mathbb R\},\mathbb{P},X,\underline Q,Z^{[n]})$ a {\sl relaxed control} with respect to $\mu$ for some $\mu\in\mathcal{P}_p(\widetilde{\mathcal{D}}_{0,T+\epsilon}(\mathbb{R}))$
if $(\Omega,\mathcal{F},\{\mathcal{F}_t,t\in\mathbb R\},\mathbb{P},X,\underline Q,Z)$ satisfies 1.-3. in Definition \ref{relaxed-control} with item $4$ being replaced by
    \begin{itemize}
        \item[$4'$.] $X$ is a $\{\mathcal{F}_t,t\in\mathbb R\}$ adapted stochastic process and $X\in\widetilde{\mathcal{D}}_{0,T+\epsilon}(\mathbb{R})$ such that for each $\phi\in \mathcal{C}^2_b(\mathbb{R}^d)$, $\mathcal{M}^{[n],\mu,\phi}$ is a well defined $\mathbb{P}$ continuous martingale, where
            \begin{equation}\label{martingale-problem-n}
            \begin{split}
                \mathcal{M}^{[n],\mu,\phi}_t:=&~\phi(X_t)-\int_0^t\int_U\mathcal{L}\phi(s,X_s,\mu_s,u)\,\underline Q_s(du)ds-\int_0^t(\partial_x\phi(X_s))^{\top}c(s)\,dZ^{[n]}_s,
             \end{split}
             \end{equation}
             with $\mathcal{L}$ defined as in Definition \ref{control-rule-mu}.
    \end{itemize}

The probability measure $\mathbb{P}$ is called a control rule if $(\Omega,\mathcal{F},\{\mathcal{F}_t,t\in\mathbb R\},\mathbb{P},X,{Q}^o,Z^{[n]})$ is a relaxed control with $(\Omega,\mathcal{F},\{\mathcal{F}_t,t\in\mathbb R\})$ being the filtered canonical space with $$\Omega:={\widetilde{\mathcal{D}}}_{0,T+\epsilon}(\mathbb{{R}})\times \widetilde{{\cal U}}_{0,T+\epsilon}(\mathbb R) \times \widetilde{\mathcal{A}}^m_{0,T}(\mathbb{R})$$ and $(X,Q,Z)$ being the coordinate projections on $(\Omega,\mathcal{F},\{\mathcal{F}_t,t\in\mathbb R\})$ and ${Q}^o$ being the disintegration of $Q$ as in Section \ref{disintegration}.
\end{definition}

\begin{remark}\label{remark-on-discontinuity-T}
If $Z$ is discontinuous at $T$, then $Z^{[n]}$ may not converge to $Z$ in $\widetilde{{\cal D}}_{0,T}(\mathbb{R})$ but only in $\widetilde{{\cal D}}_{0,T+\epsilon}(\mathbb{R})$. Likewise, the associated sequence of the state processes may only converge in $\widetilde{{\cal D}}_{0,T+\epsilon}(\mathbb{R})$.
The possible discontinuity at the terminal time $T$ is also the reason why there is no terminal cost and no cost from singular control in this section. If we assume that $T$ is always a continuous point, then terminal costs and costs from singular controls are permitted. In this case, one may as well allow unbounded singular controls.
\end{remark}

For each fixed $n$ and $\mu$, denote by $\mathcal{R}^{[n]}(\mu)$ the set of all the control rules for \textbf{MFG$^{[n]}$}, and define the cost functional corresponding to the control rule $\mathbb{P}\in\mathcal{R}^{[n]}(\mu)$ by
    \[
        J^{[n]}(\mu,\mathbb{P})=E^{\mathbb{P}}\left(\int_0^T\int_U f(t,X_t,\mu_t,u)\,Q_t(du)dt\right).
    \]
For each fixed $n$ and $\mu$, denote by $\mathcal{R}^{[n]*}(\mu)$ the set of all the optimal control rules. We can still check that $$\inf\limits_{\textrm{relaxed control }r^n}J^{[n]}(\mu,r^n)=\inf\limits_{\mathbb{P}\in\mathcal{R}^{[n]}(\mu)}J^{[n]}(\mu,\mathbb{P}),$$ which implies we can still restrict ourselves to control rules in analyzing \textbf{MFG$^{[n]}$}.

The proof of the following theorem is very similar to that of Theorem \ref{existence-MFG-finite-fuel} and is hence omitted.

\begin{theorem}
Suppose $\mathcal{A}_1$-$\mathcal{A}_6$ hold.
For each $n$, there exists a relaxed solution $\mathbb{P}^{[n]}$ to \textbf{MFG$^{[n]}$}.
\end{theorem}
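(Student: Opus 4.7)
The plan is to mirror the proof of Theorem \ref{existence-MFG-finite-fuel}, exploiting three simplifications specific to \textbf{MFG}$^{[n]}$: the convolution $Z^{[n]}$ is absolutely continuous with derivative $n(Z_t-Z_{t-1/n})$ uniformly bounded by $nm$, the state process $X^{[n]}$ therefore has continuous sample paths, and the cost functional $J^{[n]}$ contains neither a terminal nor a singular component. Accordingly, the program is (i) to establish relative compactness of $\bigcup_{\mu}\mathcal{R}^{[n]}(\mu)$, (ii) to prove joint continuity of $J^{[n]}$ on $\textrm{Gr}\,\mathcal{R}^{[n]}$, (iii) to prove that the correspondence $\mathcal{R}^{[n]}$ has closed graph, and (iv) to conclude via Berge's maximum theorem and the Kakutani-Fan-Glicksberg fixed point theorem.

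Step (i) follows the template of Proposition \ref{relative-compactness-general-result} and Lemma \ref{relative-compactness-union-control-rule}. The marginals of $Q$ and $Z$ are tight because $\widetilde{\mathcal{U}}_{0,T+\epsilon}(\mathbb R)$ and $\widetilde{\mathcal{A}}^m_{0,T}(\mathbb R)$ are compact. For the $X$-marginal I would apply Proposition \ref{result-from-Karoui} to decompose $X$ into a drift with bounded integrand, a continuous martingale with uniformly bounded absolutely continuous quadratic variation, and the absolutely continuous term $\int_0^{\cdot}c(s)(Z^{[n]})'_s\,ds$ with integrand uniformly bounded by a constant depending only on $n$, $m$ and $\|c\|_\infty$; a Kolmogorov moment bound then yields tightness in $\widetilde{\mathcal{C}}_{0,T+\epsilon}(\mathbb R)\subseteq\widetilde{\mathcal{D}}_{0,T+\epsilon}(\mathbb R)$, which together with the uniform $L^{\bar p}$ bound on $\sup_t|X_t|$ is upgraded to $\mathcal{W}_p$-relative compactness. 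Step (ii) is a direct simplification of Lemma \ref{continuity-J-mu-P} since only the running cost appears: continuity in $\mu$ uses $\mathcal{A}_5$ and the Skorokhod representation device, while continuity in $\omega=(x,q,z)$ uses $\mathcal{A}_2$, compactness of $U$, stable convergence of $Q^k\to Q$ via \cite[Corollary 2.9]{JM-1981}, and the growth bound in $\mathcal{A}_3$.

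Step (iii) follows the blueprint of Proposition \ref{admissibility-limit-P-proposition}, simplified by the continuity of $X^{[n]}$ and the absence of jump corrections in \eqref{martingale-problem-n}. The one genuinely new ingredient is continuity of the map $Z\mapsto\int_0^{\cdot}c(s)\,dZ^{[n]}_s$ from $(\widetilde{\mathcal{A}}^m_{0,T}(\mathbb R),d_{M_1})$ into $\widetilde{\mathcal{C}}_{0,T+\epsilon}(\mathbb R)$: if $Z^k\to Z$ in $M_1$ then $Z^k_t\to Z_t$ at every continuity point of $Z$, so dominated convergence with majorant $m$ yields $Z^{[n],k}_t\to Z^{[n]}_t$ for every $t$, and uniform Lipschitz continuity of the regularizations (with constant $nm\|c\|_\infty$) upgrades this to uniform convergence of $\int_0^{\cdot}c(s)\,dZ^{[n],k}_s$. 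This is precisely what is required to identify the limit of the drift term after applying Skorokhod's representation theorem and passing to subsequential weak limits of $(X^k,Q^k,Z^k)$, exactly as in the proof of Proposition \ref{admissibility-limit-P-proposition}.

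I expect step (iii) to be the main technical obstacle, because it combines $M_1$-convergence of $X^k$ and $Z^k$, stable convergence of $Q^k$, and $\mathcal{W}_p$-convergence of $\mu^k$ in a single passage to the limit within the martingale problem \eqref{martingale-problem-n}. Assumptions $\mathcal{A}_1$ and $\mathcal{A}_5$ (Lipschitz continuity of $b,\sigma$ in $x$ and local Lipschitz dependence on $\mu$) ensure that the martingale $\mathcal{M}^{[n],\mu,\phi}$ is preserved in the limit. Once (iii) is in hand, Berge's maximum theorem gives that $\mathcal{R}^{[n]*}$ is nonempty-valued and upper hemi-continuous with compact values, and the fixed-point argument of Theorem \ref{existence-MFG-finite-fuel} carries over verbatim by applying \cite[Corollary 17.55]{AB-2006} to the restriction of $\psi^{[n]}(\mu):=\{\mathbb{P}\circ X^{-1}:\mathbb{P}\in\mathcal{R}^{[n]*}(\mu)\}$ to a non-empty, compact, convex subset $\bar{S}^{[n]}\subseteq\mathcal{P}_p(\widetilde{\mathcal{D}}_{0,T+\epsilon}(\mathbb R))$ defined via the modulus-of-continuity and moment bounds from step (i).
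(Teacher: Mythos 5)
Your proposal is correct and follows exactly the route the paper intends: the paper omits this proof with the remark that it is ``very similar to that of Theorem \ref{existence-MFG-finite-fuel}'', and your four steps reproduce that argument, correctly isolating the one genuinely new ingredient (that $Z\mapsto Z^{[n]}$ maps $M_1$-convergence of the fuel-constrained controls into uniform convergence of the uniformly $nm$-Lipschitz mollifications, which closes the graph of $\mathcal{R}^{[n]}$). Nothing further is needed.
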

By Proposition \ref{relative-compactness-general-result}, the sequence $\left\{\mathbb{P}^{[n]}\right\}_{n\geq 1}$ is relatively compact. Denote its limit (up to a subsequence) by $\mathbb{P}^*$ and set $\mu^*=\mathbb{P}^*\circ X^{-1}$. Then, $\mu^*$ is the limit of $\mu^{[n]}:=\mathbb{P}^{[n]}\circ X^{-1}$. The following lemma shows that $\mathbb{P}^*$ is admissible.
\begin{lemma}\label{admissibility-limit-P}
    Suppose $\mathcal{A}_1$-$\mathcal{A}_2$, $\mathcal{A}_4$-$\mathcal{A}_6$ hold. Then $\mathbb{P}^*\in\mathcal{R}(\mu^*)$.
\end{lemma}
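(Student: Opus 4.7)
The plan is to adapt the strategy of Proposition \ref{admissibility-limit-P-proposition} using the equivalent characterization of control rules provided by Proposition \ref{transfer-X-to-Y}. The new twist is that we must pass from the absolutely continuous approximations $Z^{[n]}$ back to the original singular control $Z$ in the limit, and this is the step that makes essential use of the $M_1$ topology.

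First, since $Z^{[n]}$ is absolutely continuous and the martingale in \eqref{martingale-problem-n} is continuous, the process
\[
    Y^{[n]}_t := X_t - \int_0^t c(s)\,dZ^{[n]}_s,\qquad t\in[0,T+\epsilon],
\]
has continuous paths under $\mathbb{P}^{[n]}$, and an argument parallel to Proposition \ref{transfer-X-to-Y} shows that $Y^{[n]}$ solves a martingale problem with generator $\bar{\mathcal{L}}$ at $\mu^{[n]}$. Invoking Proposition \ref{result-from-Karoui}, I realize the triple $(X,Q,Z)$ on an extended space $(\bar{\Omega}^n,\bar{\mathcal{F}}^n,\bar{\mathbb{P}}^n)$ carrying a martingale measure $M^n$ of intensity $\bar{Q}^n$, which expresses $Y^{[n]}$ as a bounded drift plus a stochastic integral against $M^n$. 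The uniform boundedness of $b$ and $\sigma$ (Assumption $\mathcal{A}_1$) then gives
\[
    E^{\bar{\mathbb{P}}^n}|Y^{[n]}_t-Y^{[n]}_s|^4\leq C|t-s|^2,
\]
so by Kolmogorov's criterion $\{\mathbb{P}^{[n]}\circ(Y^{[n]})^{-1}\}$ is tight in the uniform topology on $\mathcal{C}(0,T+\epsilon)$.

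Second, I would combine this with the already-established relative compactness of $\{\mathbb{P}^{[n]}\}$ to obtain tightness of the joint law of $(X,Q,Z,Z^{[n]},Y^{[n]})$, with $X,Z,Z^{[n]}$ viewed in $M_1$. Passing to a subsequence and applying Skorokhod's representation yields a.s.-convergent versions $(\widetilde X^n,\widetilde Q^n,\widetilde Z^n,\widetilde Z^{[n]},\widetilde Y^{[n]})\to(\widetilde X,\widetilde Q,\widetilde Z,\widetilde Z^\infty,\widetilde Y)$ on some $(\widetilde\Omega,\widetilde{\mathcal F},\mathbb Q)$, with $\widetilde Y$ continuous as a uniform limit of continuous processes. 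Crucially, the deterministic formula $Z^{[n]}_t=n\int_{t-1/n}^t Z_s\,ds$ together with the approximation result invoked before Proposition \ref{modified-M1} gives $Z^{[n]}\to Z$ in $M_1$ almost surely, so $\widetilde Z^\infty=\widetilde Z$. The $M_1$-continuity of the map $z\mapsto\int_0^\cdot c(s)\,dz_s$ on $\widetilde{\mathcal A}^m(\mathbb R)$ (where $c$ is continuous and positive) then allows me to pass to the limit in the identity $X=Y^{[n]}+\int_0^\cdot c(s)\,dZ^{[n]}_s$, obtaining $\widetilde X=\widetilde Y+\int_0^\cdot c(s)\,d\widetilde Z_s$ with $\widetilde Y$ continuous.

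Third, I would verify the martingale property for $\widetilde{\mathcal M}^{\mu^*,\phi}_t=\phi(\widetilde Y_t)-\int_0^t\int_U\bar{\mathcal L}\phi(s,\widetilde X_s,\widetilde Y_s,\mu^*_s,u)\,\widetilde Q_s(du)\,ds$ under $\mathbb Q$ by passing to the limit in the corresponding martingale identities for $\widetilde Y^{[n]}$. This uses joint continuity of $\bar{\mathcal L}\phi$ in its arguments, the $\mathcal W_p$-continuity of $\mu\mapsto\mu_s$ via Assumption $\mathcal{A}_5$, the a.s.\ convergences just established, and the stable convergence of $\widetilde Q^n$ to $\widetilde Q$, exactly as in Proposition \ref{admissibility-limit-P-proposition}. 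Transferring back to the canonical space through $\mathbb P^*=\mathbb Q\circ(\widetilde X,\widetilde Q,\widetilde Z)^{-1}$ and invoking Proposition \ref{transfer-X-to-Y} yields $\mathbb P^*\in\mathcal R(\mu^*)$.

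The main obstacle is the second step: the limiting decomposition $\widetilde X=\widetilde Y+\int c\,d\widetilde Z$ must survive the $M_1$-limit even though the summand $\int c\,dZ$ typically has jumps. This is precisely where the $M_1$ choice pays off, both because $Z^{[n]}\to Z$ would fail in $J_1$ and because addition is $M_1$-continuous at pairs whose jump times are disjoint (Proposition \ref{M1-continuity-addition})---a condition fulfilled here because $\widetilde Y$ is continuous.
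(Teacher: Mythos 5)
Your proposal follows essentially the same route as the paper's proof: decompose $X$ as $Y^{[n]}+\int_0^\cdot c(s)\,dZ^{[n]}_s$ via (the analogue of) Proposition \ref{transfer-X-to-Y}, obtain tightness of the continuous parts from the Kolmogorov criterion, pass to a.s.\ convergent Skorokhod copies, recover the limiting decomposition with $\widetilde Z$ as integrator, and transfer the martingale identities exactly as in Proposition \ref{admissibility-limit-P-proposition}.

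The one step you state too quickly is the identification $\widetilde Z^\infty=\widetilde Z$. The fixed-path fact ``$z^{[n]}\to z$ in $M_1$ for every $z$'' does not apply directly after the Skorokhod representation, because there the smoothing operator acts on the \emph{moving} sequence $\widetilde Z^n$ rather than on a fixed path: the copy of $Z^{[n]}$ on the new space is $\widetilde Z^{[n],n}_t=n\int_{t-1/n}^t\widetilde Z^n_s\,ds$. To identify its limit one needs, at each continuity point $t$ of $\widetilde Z(\widetilde\omega)$, the two-term estimate
\[
\Bigl|n\int_{t-1/n}^t\widetilde Z^n_s\,ds-\widetilde Z_t\Bigr|
\le\sup_{t-1/n\le s\le t}\bigl|\widetilde Z^n_s-\widetilde Z_s\bigr|
+\sup_{t-1/n\le s\le t}\bigl|\widetilde Z_s-\widetilde Z_t\bigr|,
\]
where the first term vanishes by the local uniform convergence \eqref{local-uniform-convergence-at-continuity} that $M_1$ convergence of $\widetilde Z^n$ to $\widetilde Z$ guarantees near continuity points of the limit, and the second by continuity of $\widetilde Z$ at $t$; right-continuity then extends the resulting identity to all $t$. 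This diagonal estimate --- rather than the $M_1$-continuity of addition, which enters only at the very end --- is the point the paper singles out as the essential use of the $M_1$ topology (it is exactly what fails for the Meyer--Zheng topology). With that estimate supplied, your argument is complete and coincides with the paper's.
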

\begin{proof}
By Proposition \ref{transfer-X-to-Y} there exists, for each $n$, a $\{\mathcal{F}_t,0\leq t\leq T+\epsilon\}$ adapted continuous process $Y^n$, such that
    \[
        \mathbb{P}^{[n]}\left({X}_{t}=Y^n_{t}+\int_0^{t}c(s)\,d{Z}^{[n]}_{s},~t\in[0,T+\epsilon]\right)=1.
    \]
Arguing as in the proof of Proposition \ref{admissibility-limit-P-proposition}, there exists a probability space $(\widetilde{\Omega},\widetilde{\mathcal{F}},\mathbb{Q})$ supporting random varibales $(\widetilde{X}^n,\widetilde{Y}^n,\widetilde{Q}^n,\widetilde{Z}^n)$ and $(\widetilde{X},\widetilde{Y},\widetilde{Q},\widetilde{Z})$ such that
	$(\widetilde{X}^n,\widetilde{Y}^n,\widetilde{Q}^n,\widetilde{Z}^n) \to (\widetilde{X},\widetilde{Y},\widetilde{Q},\widetilde{Z}) ~ \mathbb{Q}\mbox{-a.s.}$
and
\[
    \mathbb{P}^{[n]}\circ (X,Y^n,Q,Z)^{-1}=\mathbb{Q}\circ (\widetilde{X}^n,\widetilde{Y}^n,\widetilde{Q}^n,\widetilde{Z}^n)^{-1},
\]
which implies
    \begin{equation}\label{widetilde-x-n-approximation}
        \mathbb{Q}\left(\widetilde{X}^n_{t}=\widetilde{Y}^n_{t}+\int_0^{t}c(s)\,d\widetilde{Z}^{[n],n}_s,~t\in[0,T+\epsilon]\right)=1,
    \end{equation}
where $\widetilde{Z}^{[n],n}_t=n\int_{(t-1/n)}^t \widetilde{Z}^n_s\,ds$. For each fixed $\widetilde{\omega}\in\widetilde{\Omega}$ and for each $t$ which is a continuous point of $\widetilde{Z}(\widetilde{\omega})$,  by (\ref{local-uniform-convergence-at-continuity}) in Proposition \ref{characterization-convergence-M1}, we have
\begin{equation*}
\begin{split}
    \left|n\int_{t-\frac{1}{n}}^t\widetilde{Z}^n_s(\widetilde{\omega})\,ds-\widetilde{Z}_t(\widetilde{\omega})\right|\leq &~ n\int_{t-\frac{1}{n}}^t|\widetilde{Z}^n_s(\widetilde{\omega})-\widetilde{Z}_s(\widetilde{\omega})|\,ds+n\int_{t-\frac{1}{n}}^t|\widetilde{Z}_s(\widetilde{\omega})-\widetilde{Z}_t(\widetilde{\omega})|\,ds\\
\leq&~\sup_{t-\frac{1}{n}\leq s\leq t}|\widetilde{Z}^n_s(\widetilde{\omega})-\widetilde{Z}_s(\widetilde{\omega})|+\sup_{t-\frac{1}{n}\leq s\leq t}|\widetilde{Z}_s(\widetilde{\omega})-\widetilde{Z}_t(\widetilde{\omega})|\\
\rightarrow&~ 0.
\end{split}
\end{equation*}
Then (\ref{widetilde-x-n-approximation}) and right-continuity of the path yield that
\begin{equation}\label{widetilde-x-to-be-approximated}
            \mathbb{Q}\left(\widetilde{X}_{t}=\widetilde{Y}_{t}+\int_0^{t}c(s)\,d\widetilde{Z}^{}_s,~t\in[0,T+\epsilon]\right)=1.
\end{equation}
The desired result can be obtained by the same proof as Proposition \ref{admissibility-limit-P-proposition}.

\end{proof}
\begin{remark}
In the above proof, the local uniform convergence near a continuous point is necessary. As stated in Proposition \ref{characterization-convergence-M1}, this is a direct consequence of the convergence in the $M_1$ topology. Local uniform convergence cannot be guaranteed in the Meyer-Zheng topology. For Meyer-Zheng topology, we only know that convergence is equivalent to convergence in Lebesgue measure but we do not have uniform convergence in general.
\end{remark}

We are now ready to state and prove the main result of this section.
\begin{theorem}\label{limit-is-solution-MFG}
    Suppose $\mathcal{A}_1$-$\mathcal{A}_6$ hold. Then $\mathbb{P}^*$ is a relaxed solution to the MFG \eqref{MFG1}.
\end{theorem}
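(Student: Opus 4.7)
By Lemma \ref{admissibility-limit-P} we already know $\mathbb P^* \in \mathcal R(\mu^*)$, so only optimality remains: given any competitor $\mathbb P \in \mathcal R(\mu^*)$ with $J(\mu^*,\mathbb P) < \infty$, one must show $J(\mu^*,\mathbb P^*) \le J(\mu^*,\mathbb P)$. The plan is to construct, for each such $\mathbb P$, an approximating sequence $\tilde{\mathbb P}^{[n]} \in \mathcal R^{[n]}(\mu^{[n]})$ with $J^{[n]}(\mu^{[n]}, \tilde{\mathbb P}^{[n]}) \to J(\mu^*, \mathbb P)$, then combine it with the optimality $J^{[n]}(\mu^{[n]},\mathbb P^{[n]}) \le J^{[n]}(\mu^{[n]},\tilde{\mathbb P}^{[n]})$ and the joint continuity of the running-cost functional under the convergence $(\mu^{[n]}, \mathbb P^{[n]}) \to (\mu^*, \mathbb P^*)$.

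\emph{Construction.} By Proposition \ref{result-from-Karoui} represent $\mathbb P$ on some filtered probability space $(\bar\Omega,\bar{\mathcal F},\bar{\mathcal F}_t,\bar{\mathbb P})$ supporting $(\bar X,\bar Q,\bar Z)$ and an orthogonal martingale measure $M$ of intensity $\bar Q$, with $\mathbb P = \bar{\mathbb P}\circ(\bar X,\bar Q,\bar Z)^{-1}$ and $\bar X$ a strong solution of the associated SDE driven by $M$ and $\bar Z$ with mean field $\mu^*$. Smooth $\bar Z$ pathwise to obtain $\bar Z^{[n]}$ as in \eqref{approximation-of-singular-control}, and let $\bar X^{[n]}$ be the unique strong solution of the SDE with the same $(M,\bar Q)$ but with $\bar Z$ replaced by $\bar Z^{[n]}$ and $\mu^*$ replaced by $\mu^{[n]}$. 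Set $\tilde{\mathbb P}^{[n]} := \bar{\mathbb P}\circ(\bar X^{[n]},\bar Q,\bar Z)^{-1}$; by construction $\tilde{\mathbb P}^{[n]} \in \mathcal R^{[n]}(\mu^{[n]})$.

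\emph{Main obstacle: convergence of the competitor costs.} The delicate point is that $\bar Z^{[n]} \to \bar Z$ only in the $M_1$ topology, so $\int_0^{\cdot} c(s)\,d\bar Z^{[n]}_s$ does not converge uniformly to $\int_0^{\cdot} c(s)\,d\bar Z_s$ at the jumps of $\bar Z$; Proposition \ref{characterization-convergence-M1} nevertheless gives $\bar{\mathbb P}$-a.s.\ pointwise convergence at each continuity point of $\bar Z$. Writing
\[
\alpha^{[n]}(t) := E^{\bar{\mathbb P}}\bigl|\bar X^{[n]}_t-\bar X_t\bigr|^2,\qquad \beta^{[n]}(t) := E^{\bar{\mathbb P}}\Bigl|\int_0^t c(s)\,d(\bar Z^{[n]}-\bar Z)_s\Bigr|^2,\qquad \gamma^{[n]}(t) := \int_0^t \mathcal W_p(\mu^{[n]}_s,\mu^*_s)^2\,ds,
\]
the Lipschitz assumptions $\mathcal A_1,\mathcal A_5$ combined with It\^o's isometry (and the uniform moment bounds that make the local Lipschitz factor $L$ effectively bounded) yield
\[
\alpha^{[n]}(t) \le C\int_0^t \alpha^{[n]}(s)\,ds + C\gamma^{[n]}(t) + C\beta^{[n]}(t).
\]
Since $\mu^{[n]}\to\mu^*$ in $\mathcal W_p$ dominated convergence gives $\gamma^{[n]}(t)\to 0$ for every $t$; the finite-fuel bound $\bar Z_T\le m$ and the a.s.\ continuity-point convergence above give $\beta^{[n]}(t)\to 0$ for Lebesgue-a.e.\ $t\in[0,T+\epsilon]$ by bounded convergence. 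Gronwall's lemma then yields $\alpha^{[n]}(t)\to 0$ for a.e.\ $t$. Combined with the continuity of $f$ (Assumption $\mathcal A_2$), its polynomial growth bound (Assumption $\mathcal A_3$), the compactness of $U$ (Assumption $\mathcal A_6$), and a final application of dominated convergence, this gives $J^{[n]}(\mu^{[n]},\tilde{\mathbb P}^{[n]})\to J(\mu^*,\mathbb P)$.

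\emph{Conclusion.} Since the MFG \eqref{MFG1} involves only a running cost (neither terminal nor singular-control cost), the joint-continuity argument of Lemma \ref{continuity-J-mu-P} applies verbatim to the reduced cost, so that $J^{[n]}(\mu^{[n]},\mathbb P^{[n]})\to J(\mu^*,\mathbb P^*)$. Using the optimality of $\mathbb P^{[n]}$ and the approximation above,
\[
J(\mu^*,\mathbb P^*) = \lim_n J^{[n]}(\mu^{[n]},\mathbb P^{[n]}) \le \lim_n J^{[n]}(\mu^{[n]},\tilde{\mathbb P}^{[n]}) = J(\mu^*,\mathbb P).
\]
As $\mathbb P\in\mathcal R(\mu^*)$ was arbitrary, $\mathbb P^*\in\mathcal R^*(\mu^*)$, i.e.\ $\mathbb P^*$ is a relaxed solution to \eqref{MFG1}.
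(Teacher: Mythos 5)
Your proposal is correct and follows essentially the same route as the paper: represent the competitor $\mathbb{P}$ via Proposition \ref{result-from-Karoui}, mollify the singular control as in \eqref{approximation-of-singular-control}, solve the SDE with $(\mu^{[n]},\bar Z^{[n]})$ to build an admissible $\tilde{\mathbb P}^{[n]}\in\mathcal R^{[n]}(\mu^{[n]})$, show $J^{[n]}(\mu^{[n]},\tilde{\mathbb P}^{[n]})\to J(\mu^*,\mathbb P)$ by an $L^2$-stability estimate exploiting that $M_1$ convergence gives convergence at continuity points, and combine with optimality of $\mathbb P^{[n]}$ and the continuity of the running cost. The only cosmetic difference is that you run Gronwall explicitly to get $E|\bar X^{[n]}_t-\bar X_t|^2\to 0$ for a.e.\ $t$, where the paper cites standard SDE estimates to get $E\int_0^T|X^n_t-\widetilde X_t|^2\,dt\to 0$; both suffice for the final dominated-convergence step.
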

\begin{proof}
For each $\mathbb{P}\in\mathcal{R}(\mu^*)$ such that $J(\mu^*,\mathbb{P})<\infty$, on an extension $(\widetilde{\Omega},\widetilde{\mathcal{F}},\{\widetilde{\mathcal{F}}_t,t\in\mathbb R\},\widetilde{\mathbb{P}})$ we have,
    \begin{equation*}
      d\widetilde{X}_t=\int_Ub(t,\widetilde{X}_t,\mu^*_t,u)\,\widetilde{Q}_t(du)dt+\int_U\sigma(t,\widetilde{X}_t,\mu^*_t,u)\,\widetilde{M}(du,dt)+c(t)\,d\widetilde{Z}_t,
    \end{equation*}
and $\mathbb{P}=\widetilde{\mathbb{P}}\circ (\widetilde{X},\widetilde{Q},\widetilde{Z})^{-1}$. Let $\widetilde{Z}^{[n]}_t=n\int_{t-1/n}^t\widetilde{Z}_s\,ds$. By the Lipschitz continuity of the coefficient $b$ and $\sigma$, there exists a unique strong solution $X^n$ to the following SDE on $(\widetilde{\Omega},\widetilde{\mathcal{F}},\{\widetilde{\mathcal{F}}_t,t\in\mathbb R\},\widetilde{\mathbb{P}})$:
    \begin{equation*}
      dX^n_t=\int_Ub(t,X^n_t,\mu^{[n]}_t,u)\,\widetilde{Q}_t(du)dt+\int_U\sigma(t,X^n_t,\mu^{[n]}_t,u)\,\widetilde{M}(du,dt)+c(t)\,d\widetilde{Z}^{[n]}_t.
    \end{equation*}
For each $n$, set $\mathbb{P}^n=\widetilde{\mathbb{P}}\circ(X^n,\widetilde Q,\widetilde{Z})^{-1}$. It is easy to check that $\mathbb{P}^{n}\in\mathcal{R}^{[n]}(\mu^{[n]})$. Standard estimates yield,
\begin{equation}\label{second-order-moment-estimate-1}
\begin{split}
    E^{\widetilde{\mathbb{P}}}\int_0^T |X^n_t-\widetilde{X}_t|^2\,dt\leq&~ CE^{\widetilde{\mathbb{P}}}\int_0^T \left|\int_0^tc(s)\,d\widetilde{Z}^{[n]}_s-\int_0^tc(s)\,d\widetilde{Z}_s\right|^2\,dt\\
        &+~CE^{\widetilde{\mathbb{P}}}\int_0^T\left(1+L(W_p(\mu^{[n]}_t,\delta_0),W_p(\mu^*_t,\delta_0))\right)^2\mathcal{W}_p(\mu^{[n]}_t,\mu^*_t)^2\,dt.
\end{split}
\end{equation}
$\widetilde{Z}^{[n]}\rightarrow \widetilde{Z}$ in $M_1$ a.s. implies
\[
    E^{\widetilde{\mathbb{P}}}\int_0^T \left|\int_0^tc(s)\,d\widetilde{Z}^{[n]}_s-\int_0^tc(s)\,d\widetilde{Z}_s\right|^2\,dt\rightarrow 0.
\]
By the same arguments leading to (\ref{convergence-from-skorokhod}) in the proof of Lemma \ref{continuity-J-mu-P},
\[
    E^{\widetilde{\mathbb{P}}}\int_0^T\left(1+L(W_p(\mu^{[n]}_t,\delta_0),W_p(\mu^*_t,\delta_0))\right)^2\mathcal{W}_p(\mu^{[n]}_t,\mu^*_t)^2\,dt\rightarrow 0.
\]
This yields,
\begin{equation}\label{second-order-moment-estimate-2}
    \lim_{n\rightarrow\infty}E^{\widetilde{\mathbb{P}}}\int_0^T |X^n_t-\widetilde{X}_t|^2\,dt=0.
\end{equation}
Hence, up to a subsequence, dominated convergence implies
    \begin{equation*}
      \begin{split}
         \lim_{n\rightarrow\infty}J^{[n]}(\mu^{[n]},\mathbb{P}^n) =&~\lim_{n\rightarrow\infty}E^{\widetilde{\mathbb{P}}}\left[\int_0^T \int_U f(t,X^n_t,\mu^{[n]}_t,u)\,\widetilde{Q}_t(du)dt\right] \\
           = & E^{\widetilde{\mathbb{P}}}\left[\int_0^T\int_U f(t,X_t,\mu^*_t,u)\,\widetilde{Q}_t(du)dt\right] \\
           =  & J(\mu^*,\mathbb{P}).
       \end{split}
    \end{equation*}
Moreover, by Lemma \ref{continuity-J-mu-P}, $$\lim_{n\rightarrow\infty}J^{[n]}(\mu^{[n]},\mathbb{P}^{[n]})= J(\mu^*,\mathbb{P}^*).$$

Altogether, this yields,
\[J(\mu^*,\mathbb{P})=\lim_{n\rightarrow\infty}J^{[n]}(\mu^{[n]},\mathbb{P}^n)\geq\lim_{n\rightarrow\infty}J^{[n]}(\mu^{[n]},\mathbb{P}^{[n]})= J(\mu^*,\mathbb{P}^*).\]
\end{proof}


{\subsection{Approximating a given solutions to MFGs with singular controls}\label{Sec4.2}

In this subsection, we show how to approximate a  {\sl given} solution to a MFG with singular controls of the form \eqref{MFG1} introduced in the previous subsection by a sequence of admissible control rules of MFGs with only regular controls.

Let $\mathbb{P}^*$ be any solution to the MFG \eqref{MFG1}. Since $(\Omega,\{\mathcal{F}_t,t\in\mathbb R\},\mathbb{P}^*,X,Q,Z)$ satisfies the associated martingale problem, there exists a tuple $(\widehat{X},\widehat{Q},\widehat{Z},M)$ defined on some extension $(\widehat{\Omega},\{\widehat{\mathcal{F}}_t,t\in\mathbb R\},{\mathbb{Q}})$ of the canonical path space, such that
\[
    \mathbb{P}^*\circ(X,Q,Z)^{-1}=\mathbb{Q}\circ (\widehat{X},\widehat{Q},\widehat{Z})^{-1}
\]
and
\begin{equation}\label{X-hat-Z-hat}
    \mathbb{Q}\left(\widehat{X}_{\cdot}=\int_0^{\cdot}\int_U b(s,\widehat{X}_s,\mu^*_s,u)\,\widehat{Q}_s(du)ds+\int_0^{\cdot}\int_U\sigma(s,\widehat{X},\mu^*_s,u)\,M(du,ds)+\int_0^{\cdot}c(s)\,d\widehat{Z}_s\right)=1.
\end{equation}
Let $X^{[n]}$ be the unique strong solution of the SDE
\begin{equation}\label{X-n-Z-hat-n}
    dX^{[n]}_t=\int_U b(t,X^{[n]}_t,\mu^{[n]}_t,u)\,\widehat{Q}_t(du)dt+\int_U\sigma(t,X^{[n]}_t,\mu^{[n]}_t,u)\,M(du,dt)+c(t)\,d\widehat{Z}^{[n]}_t,
\end{equation}
where $\widehat{Z}^{[n]}$ is defined by (\ref{approximation-of-singular-control}) and $\mu^{[n]}$ is any sequence satisfying $\mu^{[n]}\rightarrow\mu^*$ in $\mathcal W_{p,(\widetilde{\mathcal D}(\mathbb R),d_{M_1})}$. One checks immediately that
\[
	\mathbb{P}^{[n]}:=\mathbb{Q}\circ (X^{[n]},\widehat{Q}, \widehat{Z})^{-1}\in\mathcal{R}^{[n]}(\mu^{[n]}).
\]
Our goal is to show that the sequence $\{\mathbb{P}^{[n]}\}_{n \geq 1}$ converges to $\mathbb{P}^*$ in $\mathcal{W}_p$ along some subsequence, which relies on the following lemma. 
 Its proof uses the notion of a parameter representation of the thin graph of a function $x \in {\cal D}(0,T)$ introduced in Appendix \ref{Appendix-C}.

\begin{proposition}\label{stability-SDE-M1}
On some probability space $(\Omega,\mathcal{F},\{\mathcal{F}_t,t\geq 0\},\mathbb{P})$, let $X^n$ and $X$ be the unique strong solution to SDE,
\begin{equation}
    dX^n_t=\int_U b(t,X^n_t,\mu^{n}_t,u)\,Q_t(du)dt+\int_U\sigma(t,X^n_t,\mu^n_t,u)\,M(du,dt)+\,dZ^n_t, ~t\in[0,\widetilde{T}]
\end{equation}
respectively,
\begin{equation}
    dX_t=\int_Ub(t,X_t,\mu_t,u)\,Q_t(du)dt+\int_U\sigma(t,X_t,\mu_t,u)\,M(du,dt)+\,dZ_t,  ~t\in[0,\widetilde{T}]
\end{equation}
where $\widetilde{T}$ is a fixed positive constant, $b$ and $\sigma$ satisfy $\mathcal{A}_1$ and $\mathcal{A}_5$.
If $Z^n\rightarrow Z$ in $(\mathcal{A}^m(0,\widetilde{T}),d_{M_1})$ a.s. and $\mu^n\rightarrow\mu$ in $\mathcal{W}_{p,(\mathcal{D}(0,\widetilde{T}),d_{M_1})}$, then
\[
   \lim_{n\rightarrow\infty} E^{\mathbb{P}}d_{M_1}(X^n,X)=0.
\]
\end{proposition}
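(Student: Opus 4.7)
The plan is to reduce $M_1$-stability of $X^n$ to uniform stability of its continuous part together with $M_1$-stability of the singular driver $Z^n$, exploiting that these two pieces have no common discontinuities. Set $Y^n := X^n - Z^n$ and $Y := X - Z$; both are continuous processes in $\mathcal{C}(0,\widetilde{T})$, and they satisfy SDEs driven only by $b$ and $\sigma$, with $X^n = Y^n + Z^n$ (resp.\ $X = Y + Z$) inserted into the coefficients.

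The first step is to establish $E\sup_{t\le\widetilde{T}}|Y^n_t-Y_t|^2\to 0$. Using $\mathcal{A}_1$ (Lipschitz in $x$), $\mathcal{A}_5$ (local Lipschitz in $\mu$), the Burkholder--Davis--Gundy inequality and Gronwall's lemma, one obtains an estimate of the form
\begin{equation*}
 E\sup_{t\le\widetilde T}|Y^n_t-Y_t|^2 \le C\Bigl(E\int_0^{\widetilde T}|Z^n_s-Z_s|^2\,ds + E\int_0^{\widetilde T}\bigl(1+L(\mathcal{W}_p(\mu^n_s,\delta_0),\mathcal{W}_p(\mu_s,\delta_0))\bigr)^2\mathcal{W}_p(\mu^n_s,\mu_s)^2\,ds\Bigr).
\end{equation*}
The first integral vanishes because $M_1$-convergence of $Z^n$ in $\mathcal{A}^m$ implies pathwise $L^p$-convergence (Remark \ref{M1-stronger-L1}) and the uniform bound $|Z^n|,|Z|\le m$ allows dominated convergence; the second vanishes by the same Skorokhod-representation argument as in Lemma \ref{continuity-J-mu-P} (see also \eqref{second-order-moment-estimate-2}), using the local boundedness of $L$.

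The second step passes from $L^2$- to $M_1$-convergence and then to expectation. Along a subsequence $\sup_t|Y^n_t-Y_t|\to 0$ a.s.; since $Y^n,Y$ are continuous, this is $M_1$-convergence a.s. Combined with the hypothesis $Z^n\to Z$ in $(\mathcal{A}^m,d_{M_1})$ a.s.\ and the crucial fact that $Y$ has no jumps (so $Y$ and $Z$ share no discontinuities), Proposition \ref{M1-continuity-addition} yields $X^n=Y^n+Z^n\to Y+Z=X$ in $d_{M_1}$ a.s.\ along this subsequence. The boundedness of $b$ and $\sigma$ together with $|Z^n|\le m$ gives $\sup_n E\sup_t|X^n_t|^q<\infty$ for every $q\ge 1$, and the elementary bound $d_{M_1}(X^n,X)\le C(\sup_t|X^n_t|+\sup_t|X_t|+\widetilde T)$ makes $\{d_{M_1}(X^n,X)\}_n$ uniformly integrable; hence $Ed_{M_1}(X^n,X)\to 0$ along the subsequence. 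Applying the same reasoning to any subsequence of the original sequence upgrades this to convergence along the full sequence.

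The main obstacle is the continuity of the addition map in the $M_1$-topology, which fails in general: $M_1$-convergence is not preserved under addition when summands share jump times. The argument works only because $Y$ is everywhere continuous, eliminating the need to synchronize discontinuities between the continuous and singular components. A minor technical point is the passage from pathwise $L^p$-convergence of $Z^n-Z$ (which follows from $M_1$-convergence) to the integrated $L^2$-bound in expectation, which is handled by dominated convergence using the finite-fuel bound $m$.
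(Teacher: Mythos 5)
Your proof is correct, but it takes a genuinely different route from the paper's. The paper argues directly with parameter representations: it fixes parametrizations $(u_n,r_n)\in\Pi_{Z^n}$ and $(u,r)\in\Pi_Z$ converging uniformly (via the construction of \cite{Pang-Whitt-2010}), observes that $X^n$ and $X$ jump exactly when $Z^n$ and $Z$ do, so that $u_{X^n}(t)=\int_0^{r_n(t)}\int_U b\,Q\,ds+\int_0^{r_n(t)}\int_U\sigma\,M+u_n(t)$ together with the time change $r_n$ is a parametrization of $X^n$, and then bounds $E\sup_t|u_{X^n}(t)-u_X(t)|$ by essentially the same Gronwall/BDG estimate you use, obtaining $E\,d_{M_1}(X^n,X)\to 0$ in one stroke; this requires the additional check that the Pang--Whitt parametrizations depend measurably on $\omega$. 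You instead isolate the continuous part $Y^n=X^n-Z^n$, prove $E\sup_t|Y^n_t-Y_t|^2\to 0$ by the identical SDE stability estimate, and then invoke Proposition \ref{M1-continuity-addition} — addition is $M_1$-continuous when the limit summands share no discontinuities, which is automatic here since $Y$ is continuous — to get a.s.\ $M_1$-convergence of $X^n=Y^n+Z^n$ along subsequences, upgraded to convergence in $L^1$ by the uniform integrability coming from the bounds on $b$, $\sigma$ and the finite-fuel constraint. Your route avoids explicit parametrizations and the attendant measurability issue at the cost of a subsequence-plus-uniform-integrability step, and is in the same spirit as the paper's own Remark \ref{remark-on-relative-compactness} and the Corollary following the proposition (where the $c^+/c^-$ decomposition plays the role your monotone $Z^n$ plays here). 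Both arguments ultimately rest on the same two inputs: the Lipschitz/locally-Lipschitz stability estimate for the continuous part and the fact that this part cannot jump, so it cannot interfere with the jumps of $Z$ in the $M_1$ topology.
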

\begin{proof}
By the a.s.~convergence of $Z^{n}$ to $Z$ in $M_1$, there exists $\underline{\Omega}\subseteq\Omega$ with full measure such that $d_{M_1}(Z^n(\omega),Z(\omega))\rightarrow 0$ for each $\omega\in\underline{\Omega}$. Furthermore, by Proposition \ref{characterization-convergence-M1}(2), for each $\omega\in\underline{\Omega}$, there exist parameter representations $(u(\omega),r(\omega))\in\Pi_{Z(\omega)}$ and $(u_n(\omega),r_n(\omega))\in\Pi_{Z^{n}(\omega)}$ of $Z(\omega)$ and $Z^n(\omega)$ $(n \in \mathbb{N})$, respectively, such that
    \begin{equation}\label{convergence-parameter-representation}
       \|u_n(\omega)-u(\omega)\|\rightarrow 0\textrm{ and }\|r_n(\omega)-r(\omega)\|\rightarrow 0.
    \end{equation}
Parameter representations with the desired convergence properties are
constructed in, e.g., \cite[ Section 4]{Pang-Whitt-2010}; see also \cite[ Theorem 1.2]{Pang-Whitt-2010}. 
A careful inspection of \cite[Section 4]{Pang-Whitt-2010} shows that the constructions of $(u(\omega),r(\omega))$ and $(u_n(\omega),r_n(\omega))$ only use measurable operations. As a result the mappings $(u(\cdot),r(\cdot))$ and $(u_n(\cdot),r_n(\cdot))$ are measurable. 

We now construct parameter representations $(u_{X^n}(\omega),r_{X^n}(\omega))$ and $(u_{X}(\omega),r_{X}(\omega))$ of $X^n(\omega)$ and $X(\omega)$, respectively. Since $X(\omega)$ (resp. $X^n(\omega)$) jumps at the same time as $Z(\omega)$ (resp. $Z^{n}(\omega)$), we can choose
\[
    r_{X}(\omega)=r(\omega),~~r_{X^n}(\omega)=r_n(\omega).
\]
In the following, we will drop the dependence on $\omega\in\underline{\Omega}$, if there is no confusion. By \cite[equation (3.1)]{Pang-Whitt-2010}, parameter representations of $X^n$ and $X$ in terms of the parameter representations of $Z^n$ and $Z$ are given by, respectively,
  \begin{equation*}
      u_{X^n}(t)=\int_0^{r_n(t)}\int_U b(s,X^n_s,\mu^{n}_{s},u)\,Q_s(du)ds+\int_0^{r_n(t)}\int_U \sigma(s,X^n_s,\mu^n_s,u)\,M(du,ds)+u_n(t),
    \end{equation*}
and
      \begin{equation*}
      u_{X}(t)=\int_0^{r(t)} \int_Ub(s,X_s,\mu_s,u)\,Q_s(du)ds+\int_0^{r(t)}\int_U \sigma(s,X_s,\mu_s,u)\,M(du,ds)+u(t).
    \end{equation*}
Hence, by the Lipschitz property of $b$ and $\sigma$ and BDG's inequality, we get,
\begin{equation}\label{u-X-n-minus-u-X}
\begin{split}
&~E\sup_{0\leq t\leq \widetilde T}|u_{X^n}(t)-u_{X}(t)|\\
        \leq&~CE\left(\int_0^{\widetilde{T}}|{X^n}(s)-{X}(s)|^2\,ds\right)^{\frac{1}{2}}\\
        &~+C\left(\int_0^{\widetilde T}\left(1+L(\mathcal{W}_p(\mu^n_{s},\delta_0),\mathcal{W}_p(\mu_{s},\delta_0))\right)^2\mathcal{W}^2_p(\mu^n_{s},\mu_{s})\,ds\right)^{\frac{1}{2}}
        +CE\sup_{0\leq t\leq \widetilde T}|r_n(t)-r(t)|\\
        &~+E\sup_{0\leq t\leq \widetilde T}\left|\int_0^{r_n(t)}\int_U \sigma(s,X_s,\mu_s,u)\,M(du,ds)-\int_0^{r(t)}\int_U \sigma(s,X_s,\mu_s,u)\,M(du,d{s})\right|\\
        &~+E\sup_{0\leq t\leq\widetilde T}\left|u_n(t)-u(t)\right|.
\end{split}
\end{equation}
The same argument as in the proof of Theorem \ref{limit-is-solution-MFG} yields that the first two terms on the right hand side of \eqref{u-X-n-minus-u-X} converge to $0$ while the last three terms converge to $0$ due to \eqref{convergence-parameter-representation}. Thus, 
\[
       \lim_{n\rightarrow\infty} E\sup_{0\leq t\leq \widetilde T}|u_{X^n}(t)-u_{X}(t)|=0.
\]

\end{proof}
\begin{corollary}
Under the assumptions of Proposition \ref{stability-SDE-M1}, along a subsequence $\mathbb{P}^{[n]}\rightarrow\mathbb{P}^*$ in $\mathcal{W}_p.$
\end{corollary}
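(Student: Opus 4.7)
The plan is to read the corollary as a packaging of Proposition \ref{stability-SDE-M1} into a statement about joint laws on the canonical space. The key observation is that the measures $\mathbb{P}^{[n]}=\mathbb{Q}\circ(X^{[n]},\widehat Q,\widehat Z)^{-1}$ and $\mathbb{P}^{*}=\mathbb{Q}\circ(\widehat X,\widehat Q,\widehat Z)^{-1}$ differ only in their first coordinate; the martingale measure $M$, the relaxed control $\widehat Q$, and the singular control $\widehat Z$ are fixed. So essentially the only work is to compare $X^{[n]}$ with $\widehat X$.

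First, I would verify the hypotheses of Proposition \ref{stability-SDE-M1} for the pair of SDEs \eqref{X-hat-Z-hat} and \eqref{X-n-Z-hat-n} on the interval $[0,T+\epsilon]$. By construction $\mu^{[n]}\to\mu^{*}$ in $\mathcal{W}_{p,(\widetilde{\mathcal{D}}(\mathbb{R}),d_{M_1})}$ and, since $\widehat Z\in\widetilde{\mathcal{A}}^{m}_{0,T}(\mathbb{R})$, the convolution smoothing $\widehat Z^{[n]}$ from \eqref{approximation-of-singular-control} converges almost surely to $\widehat Z$ in $(\widetilde{\mathcal{A}}^{m}_{0,T+\epsilon}(\mathbb{R}),d_{M_1})$ (using Proposition \ref{characterization-convergence-M1}(3) as cited in Section \ref{regular-approximate-singular}). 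Since $b,\sigma$ satisfy $\mathcal{A}_{1}$ and $\mathcal{A}_{5}$, Proposition \ref{stability-SDE-M1} yields
\[
E^{\mathbb{Q}}\,d_{M_{1}}(X^{[n]},\widehat X)\longrightarrow 0.
\]

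Passing to a subsequence $\{n_{k}\}$, we obtain $d_{M_{1}}(X^{[n_{k}]},\widehat X)\to 0$ $\mathbb{Q}$-a.s. Because $\widehat Q$ and $\widehat Z$ do not depend on $n$, the triple $(X^{[n_{k}]},\widehat Q,\widehat Z)$ converges a.s. to $(\widehat X,\widehat Q,\widehat Z)$ in the product topology on $\Omega$, so $\mathbb{P}^{[n_{k}]}\to\mathbb{P}^{*}$ weakly. To upgrade this to $\mathcal{W}_{p}$ convergence via \cite[Definition 6.8]{Villani-2009}, it suffices to verify uniform integrability of the $p$-th power of the canonical-space distance under $\{\mathbb{P}^{[n]}\}$. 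For the $\widehat Q$- and $\widehat Z$-marginals this is automatic from compactness of $U$ and the finite-fuel bound $\widehat Z_{T}\le m$. For the $X^{[n]}$-marginal, the uniform boundedness of $b,\sigma$ and of $\widehat Z_{T}^{[n]}\le m$ yields, via the same BDG/Gronwall estimate that underlies Proposition \ref{relative-compactness-general-result}, a uniform moment bound
\[
\sup_{n}E^{\mathbb{Q}}\Bigl[\sup_{0\le t\le T+\epsilon}|X^{[n]}_{t}|^{\bar p}\Bigr]<\infty
\]
for some $\bar p>p$, which dominates $\sup_{n}E^{\mathbb{Q}}d_{M_{1}}(X^{[n]},0)^{\bar p}$ and gives the required uniform integrability.

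There is no real obstacle here: the substantive analytic content sits inside Proposition \ref{stability-SDE-M1}, and the present corollary is a straightforward repackaging combining (i) extraction of an a.s.-convergent subsequence from convergence in expectation, (ii) the fact that the other two coordinates are frozen, and (iii) a routine $(\bar p,p)$-moment argument to strengthen weak convergence to Wasserstein convergence. The only mild subtlety worth flagging is that convergence of $X^{[n]}$ in $M_{1}$ is strictly weaker than in $J_{1}$, which is why the whole argument has to be set up on $[0,T+\epsilon]$ as in Remark \ref{remark-on-discontinuity-T}, but this has already been absorbed into the framework of Section \ref{Sec4.1}.
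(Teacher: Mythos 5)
Your overall strategy coincides with the paper's: apply Proposition \ref{stability-SDE-M1} to get $E^{\mathbb{Q}}d_{M_1}(X^{[n]},\widehat X)\to 0$, extract an a.s.\ convergent subsequence, observe that the coordinates $\widehat Q,\widehat Z$ are frozen, and upgrade weak convergence of the joint laws to $\mathcal W_p$ convergence via a uniform $\bar p$-moment bound. However, you gloss over two technical steps that the paper's proof handles explicitly, and the first of these is a genuine gap in the verification of the hypotheses of Proposition \ref{stability-SDE-M1}. That proposition concerns SDEs driven directly by $dZ^n_t$ and $dZ_t$ (no integrand), so to apply it to \eqref{X-hat-Z-hat} and \eqref{X-n-Z-hat-n} you must take the driving processes to be $\int_0^\cdot c(s)\,d\widehat Z^{[n]}_s$ and $\int_0^\cdot c(s)\,d\widehat Z_s$, and the hypothesis you need is a.s.\ $M_1$-convergence of these \emph{integrals}, not of $\widehat Z^{[n]}$ to $\widehat Z$. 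This implication is not automatic: $M_1$-convergence is not in general preserved by integration against a continuous function of mixed sign, because the resulting paths need not be monotone and sums of $M_1$-convergent sequences need not converge in $M_1$. The paper closes this by writing $\int \widetilde c\,d\widehat Z^{[n]}=\int\widetilde c^+\,d\widehat Z^{[n]}-\int\widetilde c^-\,d\widehat Z^{[n]}$, noting each piece is monotone (hence converges by the finite-fuel compactness argument), and invoking Proposition \ref{M1-continuity-addition} because the two limit pieces never jump at the same time. You should supply this step (or at least invoke strict positivity of $c$ from $\mathcal A_4$ and argue monotone convergence of $\int c\,d\widehat Z^{[n]}$ directly).

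The second omission is the possible jump of $\widehat Z$ (hence of $\widehat X$) at $t=0$. Proposition \ref{stability-SDE-M1} is stated on the standard spaces $(\mathcal D(0,\widetilde T),d_{M_1})$ and $(\mathcal A^m(0,\widetilde T),d_{M_1})$, which in Whitt's framework presuppose $x_{0-}=x_0$; singular controls naturally jump at time $0$, which is exactly why the paper introduces $\widetilde{\mathcal D}(\mathbb R)$ and the modified oscillation function in Appendix B. You correctly flag the analogous issue at the terminal time (whence the interval $[0,T+\epsilon]$), but not at the origin. The paper resolves it by extending the dynamics to $[-\widetilde\epsilon,T+\epsilon]$ with coefficients $\widetilde b=\widetilde\sigma=0$ and $\widetilde c=c(0)$ on $[-\widetilde\epsilon,0)$, so that the initial jump becomes an interior jump, applies Proposition \ref{stability-SDE-M1} on that extended interval, and then transfers the conclusion back to $\widetilde{\mathcal D}_{0,T+\epsilon}(\mathbb R)$. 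Your final step (weak convergence plus uniform integrability of $d_{M_1}(\cdot,0)^p$ implies $\mathcal W_p$-convergence via \cite[Definition 6.8]{Villani-2009}) matches the paper and is fine.
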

\begin{proof}
{For each $\widetilde\epsilon>0$,} we extend the equations \eqref{X-hat-Z-hat} and \eqref{X-n-Z-hat-n} by
\[
    \widehat{X}_s=\int_{-\widetilde{\epsilon}}^s\int_U \widetilde{b}(t,\widehat{X}_t,\mu^*_t,u)\,\widehat{Q}_t(du)dt+\int_{-\widetilde{\epsilon}}^s\int_U \widetilde{\sigma}(t,\widehat{X}_t,\mu^*_t,u)\,M(du,dt)+\int_{-\widetilde{\epsilon}}^s\widetilde{c}(t)\,d\widehat{Z}_t,
\]
respectively,
\[
    X^{[n]}_s=\int_{-\widetilde{\epsilon}}^s\int_U \widetilde{b}(t,X^{[n]}_t,\mu^{[n]}_t,u)\,\widehat{Q}_t(du)dt+\int_{-\widetilde{\epsilon}}^s\int_U\widetilde{\sigma}(t,X^{[n]}_t,\mu^{[n]}_t,u)\,M(du,dt)+\int_{-\widetilde{\epsilon}}^s\widetilde{c}(t)\,d\widehat{Z}^{[n]}_t,
\]
where
\[
    \widetilde{b}(s,\cdot)=b(s,\cdot), ~\widetilde{\sigma}(s,\cdot)=\sigma(s,\cdot),~\widetilde{c}(s)=c(s)\textrm{ when }s\geq 0;~ \widetilde{b}(s,\cdot)=0, ~\widetilde{\sigma}(s,\cdot)=0,~\widetilde{c}(s)=c(0)\textrm{ when }s<0.
\]

Moreover, we have that $$\int_{-\widetilde{\epsilon}}^{\cdot}\widetilde{c}(t)\,d\widehat{Z}^{[n]}_t=\int_{-\widetilde{\epsilon}}^{\cdot}\widetilde{c}^+(t)\,d\widehat{Z}^{[n]}_t-\int_{-\widetilde{\epsilon}}^{\cdot}\widetilde{c}^-(t)\,d\widehat{Z}^{[n]}_t,$$ where a.s. in $({\mathcal{A}}^m(-\widetilde{\epsilon},T+\epsilon),d_{M_1})$,
\[
	\int_{-\widetilde{\epsilon}}^{\cdot}\widetilde{c}^+(t)\,d\widehat{Z}^{[n]}_t\rightarrow\int_{-\widetilde{\epsilon}}^{\cdot}\widetilde{c}^+(t)\,d\widehat{Z}^{}_t
	\quad \mbox{and} \quad
	\int_{-\widetilde{\epsilon}}^{\cdot}\widetilde{c}^-(t)\,d\widehat{Z}^{[n]}_t\rightarrow\int_{-\widetilde{\epsilon}}^{\cdot}\widetilde{c}^-(t)\,d\widehat{Z}^{}_t.
\]	
	Since $\int_{-\widetilde{\epsilon}}^{\cdot}\widetilde{c}^+(t)\,d\widehat{Z}^{}_t$ and $\int_{-\widetilde{\epsilon}}^{\cdot}\widetilde{c}^-(t)\,d\widehat{Z}^{}_t$ never jump at the same time, Proposition \ref{M1-continuity-addition} implies that
\[
    \int_{-\widetilde{\epsilon}}^{\cdot}\widetilde{c}(t)\,d\widehat{Z}^{[n]}_t\rightarrow \int_{-\widetilde{\epsilon}}^{\cdot}\widetilde{c}(t)\,d\widehat{Z}^{}_t
\]
a.s.~in $({\mathcal{A}}^m(-\widetilde{\epsilon},T+\epsilon),d_{M_1})$.
Hence, by Proposition \ref{stability-SDE-M1}, $$E^{\mathbb{Q}}d_{M_1}(X^{[n]},\widehat{X})\rightarrow 0.$$
 Hence, up to a subsequence,
 \[
    d_{M_1}(X^{[n]},\widehat{X})\rightarrow 0 \mbox{ in } \mathcal D(-\widetilde{\epsilon},T+\epsilon); \quad \mathbb{Q}\mbox{-a.s.},
 \]
 which implies the same convergence holds in $\widetilde{\mathcal D}_{0,T+\epsilon}(\mathbb R)$. For any nonnegative continuous function $\phi$ satisfying
\[
	\phi(x,q,z)\leq C(1+d_{M_1}(x,0)^p+\mathcal W_p^p(q/T,\delta_0)+d_{M_1}(z,0)^p),
\]	
 the uniform integrability of $d_{M_1}(X^{[n]},0)^p$, $\mathcal W_p^p(\widehat{Q}/T,\delta_0)$ and $d_{M_1}(\widehat{Z},0)^p$ yields $$E^{\mathbb{Q}}\phi(X^{[n]},\widehat{Q},\widehat{Z})\rightarrow E^{\mathbb{Q}}\phi(\widehat{X},\widehat{Q},\widehat{Z}).$$ This implies $\mathbb{Q}\circ (X^{[n]},\widehat{Q},\widehat{Z})^{-1}\rightarrow\mathbb{Q}\circ(\widehat{X},\widehat{Q},\widehat{Z})^{-1}$ in $\mathcal{W}_{p,\Omega}$ by \cite[Definition 6.8]{Villani-2009}, that is, $\mathbb{P}^{[n]}\rightarrow\mathbb{P}^*$ in $\mathcal{W}_{p,\Omega}$.
\end{proof}
}
\begin{appendix}
\section{Wasserstein distance and representation of martingales}\label{AA}

\begin{definition}\label{Wasserstein-space-metric}
Let $(E,\varrho)$ be a metric space. Denote by $\mathcal{P}_p(E)$ the class of all probability measures on $E$ with finite moment of $p$-th order.
The {\sl $p$-th Wasserstein metric} on $\mathcal{P}_p(E)$ is defined by:
\begin{equation}
\mathcal{W}_{p,(E,\varrho)}(\mathbb{P}_1,\mathbb{P}_2)=\inf\left\{\left(\int_{E\times E}\varrho(x,y)^p\,\gamma(dx,dy)\right)^{\frac{1}{p}}:\gamma(dx,E)=\mathbb{P}_1(dx),\gamma(E,dy)=\mathbb{P}_2(dy)\right\}.
\end{equation}
The set  $\mathcal{P}_p(E)$ endowed with the Wasserstein distance is denoted by $\mathcal{W}_{p,(E,\varrho)}$ or $\mathcal{W}_{p,E}$ or $\mathcal{W}_{p}$ if there is no risk of confusion about the underlying state space or distance.
\end{definition}


It is well known \cite[Theorem III-10]{Karoui-Meleard-1990} that for every continuous square integrable martingale $m$ with quadratic variation process $\int_0^{\cdot}\int_U a(s,u)\,v_s(du)ds$, where $a=\sigma\sigma^{\top}$ and $\sigma$ is a bounded measurable function and $v$ is $\mathcal{P}(U)$ valued stochastic process, on some extension of the original probability space, there exists a martingale measure $M$ with intensity $v_s(du)ds$ such that $m_{\cdot}=\int_0^{\cdot}\int_U \sigma(t,u)\,M(du,dt)$. This directly leads to the following proposition, which is frequently used in the main text.

\begin{proposition}\label{result-from-Karoui}
The existence of solution $\mathbb{P}$ to the martingale problem (\ref{martingale-problem-MFGs}) is equivalent to the existence of the weak solution to the following SDE
    \begin{equation}\label{martingale-measure-SDE}
      d\bar{X}_t=\int_U b(t,\bar{X}_t,\mu_t,u)\,\bar{Q}_s(du)ds+\int_U \sigma(t,\bar{X}_t,\mu_t,u)\,\bar{M}(du,dt)+c(t)\,d\bar{Z}_t,
    \end{equation}
where $\bar{X}$, $\bar{M}$ and $\bar{Z}$ are defined on some extension $(\bar{\Omega},\bar{\mathcal{F}},\bar{\mathbb{P}})$ and $\bar{M}$ is a martingale measure with intensity $\bar{Q}$. Moreover, the two solutions are related by $\mathbb{P}=\bar{\mathbb{P}}\circ (\bar{X},\bar{Q},\bar{Z})^{-1}$.
\end{proposition}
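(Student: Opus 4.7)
The plan is to prove both implications by isolating the continuous martingale part of $\phi(X)$ after subtracting the contribution of the singular control, and then invoking the martingale-measure representation cited from El Karoui--Méléard just before the statement.

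For the ``SDE $\Rightarrow$ martingale problem'' direction, I would start with a weak solution $(\bar\Omega,\bar{\mathcal F},\bar{\mathbb P},\bar X,\bar Q,\bar Z,\bar M)$ and apply the general It\^o formula to $\phi(\bar X_t)$ for $\phi\in\mathcal{C}^2_b(\mathbb R^d)$. Since $\bar X$ has continuous quadratic variation $\int_0^\cdot\!\!\int_U a(s,\bar X_s,\mu_s,u)\bar Q_s(du)ds$ (coming from the martingale-measure integral of $\sigma$) and since all jumps of $\bar X$ come from $c(s)d\bar Z_s$, rearranging It\^o's formula yields
\[
\mathcal{M}^{\mu,\phi}_t \;=\; \phi(\bar X_0) + \int_0^t\!\!\int_U (\partial_x\phi(\bar X_s))^{\top}\sigma(s,\bar X_s,\mu_s,u)\,\bar M(du,ds),
\]
which is a continuous $(\bar{\mathbb P},\bar{\mathcal F}_t)$-martingale by the standard theory of martingale measures. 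Setting $\mathbb P:=\bar{\mathbb P}\circ(\bar X,\bar Q,\bar Z)^{-1}$ transports this property to the canonical space and gives a control rule.

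For the converse direction, I fix a control rule $\mathbb P$. Taking $\phi(x)=x^i$ and $\phi(x)=x^i x^j$ in the definition of $\mathcal M^{\mu,\phi}$ and subtracting off the jump pieces shows that the process
\[
m^i_t \;:=\; X^i_t - \int_0^t c_i(s)\,dZ_s - \sum_{0\le s\le t}\Delta X^i_s \;+\; (\text{drift correction})
\]
is a continuous $(\mathbb P,\mathcal F_t)$-martingale with continuous quadratic covariation
$\langle m^i,m^j\rangle_t = \int_0^t\!\int_U a_{ij}(s,X_s,\mu_s,u)\,Q_s(du)\,ds$; more precisely, $m=(m^i)$ is the continuous martingale obtained after removing the absolutely continuous drift and the jump contribution associated with $c(s)dZ_s$. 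Since $a=\sigma\sigma^\top$ and $\sigma$ is bounded, the El Karoui--Méléard representation theorem (Theorem III.10 of \cite{Karoui-Meleard-1990}) applies on a suitable extension $(\bar\Omega,\bar{\mathcal F},\bar{\mathbb P})$: it produces a martingale measure $\bar M$ with intensity $Q_s(du)\,ds$ such that $m_\cdot=\int_0^\cdot\!\int_U \sigma(s,X_s,\mu_s,u)\,\bar M(du,ds)$ on that extension. Writing $\bar X,\bar Q,\bar Z$ for the lifts of $X,Q,Z$ to this extension gives a quadruple satisfying the SDE, with $\mathbb P=\bar{\mathbb P}\circ(\bar X,\bar Q,\bar Z)^{-1}$ by construction.

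The main technical point, and the one I would take most care with, is the clean extraction of the continuous martingale part in the presence of the singular control: one must verify that the compensated sum of jumps $\sum_{0\le s\le t}\bigl(\phi(X_s)-\phi(X_{s-})-(\partial_x\phi(X_{s-}))^\top \Delta X_s\bigr)$ exactly matches the jump contribution coming from $c(s)dZ_s$ (so that no jump term remains after subtraction), and that the resulting continuous martingale has exactly the quadratic variation prescribed by $a$. Everything else is routine polarization, localization to make the quadratic-variation representation rigorous, and transport of measure between the canonical space and the extension on which $\bar M$ lives.
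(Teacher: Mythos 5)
Your proposal is correct and follows exactly the route the paper has in mind: the paper gives no separate proof of Proposition \ref{result-from-Karoui}, asserting that it follows ``directly'' from the El Karoui--M\'el\'eard representation theorem quoted immediately before it, and the detailed It\^o-formula computation you describe (take $\phi(x)=x^i,\,x^ix^j$, strip off the drift and the $c(s)\,dZ_s$ contribution to isolate a continuous martingale with quadratic variation $\int_0^\cdot\int_U a\,Q_s(du)\,ds$, then represent it as $\int_0^\cdot\int_U\sigma\,\bar M(du,ds)$ on an extension) is precisely the argument the authors carry out in their Appendix~\ref{sketch-proof} for the closely related Proposition \ref{transfer-X-to-Y}. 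No gaps; the only cosmetic point is that for linear $\phi$ the compensated jump sum vanishes identically, so subtracting $\int_0^t c(s)\,dZ_s$ already removes all jumps and no separate jump correction is needed.
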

\section{Strong $M_1$ Topology in Skorokhod Space} \label{Appendix-C}
In this section, we summarise some definitions and properties about strong Skorokhod $M_1$ topology. For more details, please refer to Chapter 3, 11 and 12 in \cite{Whitt-2002}. Note that in \cite{Whitt-2002} two $M_1$ topologies are introduced, the strong one and the weak one. In this paper, we only apply the strong one. So without abuse of terminologies, we just take $M_1$ topology for short.

For $x\in \mathcal{D}(0,T)$, denote by $Disc(x)$ the set of discontinuous points of $x$. Note that on $[0,T]$, $Disc(x)$ is at most countable. Define the thin graph of $x$ as
\begin{align}\label{thin-graph}
G_x=\{(z,t)\in\mathbb{R}^d\times[0,T]:z\in[x_{t-},x_t]\},
\end{align}
where $x_{t-}$ is the left limit of $x$ at $t$ and $[a,b]$ means the line segment between $a$ and $b$, i.e., $[a,b]=\{\alpha a+(1-\alpha)b:0\leq\alpha\leq 1\}$. On the thin graph, we define an order relation. For each pair $(z_i,t_i)\in G_x$, $i=1,2$, $(z_1,t_1)\leq (z_2,t_2)$ if either of the following holds: (1) $t_1<t_2$; (2) $t_1=t_2$ and $|z_1-x_{t_1-}|<|z_2-x_{t_2-}|$.

Now we define the parameter representation, on which the $M_1$ topology depends. The mapping pair $(u,r)$ is called a parameter representation if $(u,r):[0,1]\rightarrow G_x$, which is continuous and nondecreasing w.r.t. the order relation defined above. Denote by $\Pi_x$ all the parameter representations of $x$. Let
\begin{align}\label{strong-M1-metric}
d_{M_1}(x_1,x_2)=\inf_{(u_i,r_i)\in \Pi_{x_i},i=1,2} ||u_1-u_2||\vee||r_1-r_2||.
\end{align}
It can be shown that $d_{M_1}$ is a metric on $\mathcal{D}(0,T)$ such that $\mathcal{D}(0,T)$ is a Polish space. The topology induced by $d_{M_1}$ is called $M_1$ topology.

For each $t\in[0,T]$ and $\delta>0$, the oscillation function around $t$ is defined as
\begin{align}\label{oscillation-fun}
\bar{v}(x,t,\delta)=\sup_{0\vee(t-\delta)\leq t_1\leq t_2 \leq(t+\delta)\wedge T}|x_{t_1}-x_{t_2}|,
\end{align}
and the so called strong $M_1$ oscillation function is defined as
\begin{align}\label{strong-M1-oscillation-fun}
w_s(x,t,\delta)=\sup_{0\vee(t-\delta)\leq t_1< t_2<t_3\leq (t+\delta)\wedge T}|x_{t_2}-[x_{t_1},x_{t_3}]|,
\end{align}
where $|x_{t_2}-[x_{t_1},x_{t_3}]|$ is the distance from $x_{t_2}$ to the line segment $[x_{t_1},x_{t_3}]$.
Moreover,
\begin{align}
w_s(x,\delta):=\sup_{0\leq t\leq T}w_s(x,t,\delta).
\end{align}


\begin{proposition}\label{characterization-convergence-M1}
The following statements about the characterization of $M_1$ convergence are equivalent,
    \begin{itemize}
    \item[1. ]  $x^n\rightarrow x$ in $M_1$ topology;
    \item[2. ] there exist $(u,r)\in\Pi_x$ and $(u^n,r^n)\in\Pi_{x^n}$ for each $n$ such that
        \[
        \lim\limits_{n\rightarrow \infty}\|u^n-u\|\vee \|r^n-r\|=0;
        \]
    \item[3. ] $x_n(t)\rightarrow x(t)$ for each $t\in [0,T]\setminus Disc(x)$ including $0$ and $T$, and
        \[
         \lim_{\delta\rightarrow 0}\overline{\lim}_{n\rightarrow\infty}w_s(x^n,\delta)=0.
         \]
    \end{itemize}
Moreover, each one of the above three items implies the local uniform convergence of $x^n$ to $x$ at each continuous point of $x$, that is, for each $t\not\in Disc(x)$, there holds
\begin{equation}\label{local-uniform-convergence-at-continuity}
\lim_{\delta\rightarrow 0}\limsup_{n\rightarrow\infty}\sup_{t-\delta\leq s\leq t+\delta}|x_n(s)-x(s)|=0.
\end{equation}
\end{proposition}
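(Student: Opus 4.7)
The plan is to establish the cyclic equivalence (1) $\Rightarrow$ (2) $\Rightarrow$ (3) $\Rightarrow$ (1) and then deduce the local uniform convergence statement \eqref{local-uniform-convergence-at-continuity} as a corollary of the strong $M_1$ oscillation bound from (3). The three equivalences are due to Whitt \cite{Whitt-2002}; I only sketch the key ideas and highlight where the work lies.

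For (1) $\Rightarrow$ (2), the definition of $d_{M_1}$ as an infimum lets me pick $(u^n, r^n) \in \Pi_{x^n}$ and $(\tilde u^n, \tilde r^n) \in \Pi_x$ with $\|u^n - \tilde u^n\| \vee \|r^n - \tilde r^n\| \leq d_{M_1}(x^n, x) + 1/n \to 0$. The key point is that $\Pi_x$ is a compact subset of $C([0,1]; \mathbb{R}^d \times [0,T])$ under uniform convergence: each $(u,r) \in \Pi_x$ takes values in the compact thin graph $G_x$, and the monotonicity requirement in the thin-graph order combined with the c\`adl\`ag structure of $x$ forces equicontinuity (any common modulus is controlled by how the parameter must slow down across jumps of $x$). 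Passing to a subsequence, $(\tilde u^{n_k}, \tilde r^{n_k}) \to (u, r) \in \Pi_x$ uniformly, so $(u^{n_k}, r^{n_k}) \to (u, r)$ as well; a standard subsequence-of-subsequence argument upgrades this to full-sequence convergence. The converse (2) $\Rightarrow$ (1) is immediate from the infimum definition. For (2) $\Rightarrow$ (3), note that when $t \notin Disc(x)$ the preimage $r^{-1}(t)$ is a single point or an interval on which $u$ is constant equal to $x(t)$, and uniform convergence of $(u^n, r^n)$ transports the value; the endpoints $0$ and $T$ are handled by $r(0)=0$, $r(1)=T$. The oscillation estimate $w_s(x^n, \delta) \to 0$ in the iterated limit follows because the image of $(u^n, r^n)$ traces $G_{x^n}$ monotonically, so $w_s(x^n, \delta)$ is dominated by the uniform modulus of $u^n$ on parameter intervals whose $r^n$-image has length $\leq 2\delta$, which converges to the corresponding modulus for $u$ and vanishes as $\delta \to 0$.

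The hard direction is (3) $\Rightarrow$ (2), where one must construct parameter representations of the $x^n$ converging uniformly to a fixed $(u, r) \in \Pi_x$. The main obstacle is coherently aligning the jumps of $x^n$ with those of $x$. Enumerate the jumps $\{t_k\}$ of $x$ in order of decreasing size; the oscillation bound combined with pointwise convergence at a dense set of continuity points forces $x^n$, for $n$ large, to exhibit a comparable jump within an interval of radius $\delta$ of each $t_k$ with pre- and post-jump values converging to $x_{t_k-}$ and $x_{t_k}$. One then defines $(u^n, r^n) \in \Pi_{x^n}$ by mimicking a fixed parameter representation $(u, r) \in \Pi_x$: on the parameter interval where $(u,r)$ traverses the $k$-th jump segment, $(u^n, r^n)$ is set to traverse the matched jump of $x^n$; on intermediate parameter intervals, convergence at continuity points together with the strong oscillation bound provide the uniform interpolation. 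Finally, to derive \eqref{local-uniform-convergence-at-continuity} at a continuity point $t$ of $x$, fix $\eta > 0$, pick $\delta > 0$ small enough that $|x(s) - x(t)| < \eta/3$ for $s \in [t-\delta, t+\delta]$ and $\limsup_n w_s(x^n, \delta) < \eta/3$, and choose continuity points $t', t'' \in (t-\delta, t+\delta)$ with $|x(t') - x(t)|, |x(t'') - x(t)| < \eta/3$. By (3), $x^n(t') \to x(t')$ and $x^n(t'') \to x(t'')$; for any $s \in (t', t'')$ the strong $M_1$ oscillation bound gives $|x^n(s) - [x^n(t'), x^n(t'')]| \leq w_s(x^n, \delta)$, and the triangle inequality against $x(t)$ yields $|x^n(s) - x(t)| < \eta$ for all sufficiently large $n$, establishing \eqref{local-uniform-convergence-at-continuity}.
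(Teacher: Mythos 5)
The paper itself does not prove this proposition: it is stated in Appendix B as a summary of \cite[Chapter 12]{Whitt-2002} (essentially Theorem 12.5.1 and Lemma 12.5.1 there), so there is no in-paper argument to compare against and your sketch must stand on its own. It has one step that fails outright: in the direction (1) $\Rightarrow$ (2) you assert that $\Pi_x$ is a compact subset of $C([0,1];\mathbb{R}^d\times[0,T])$, the monotonicity of parametric representations ``forcing equicontinuity.'' This is false. Take $x(t)=t$ on $[0,T]$, so that $G_x$ is the diagonal and a parametric representation is any pair $(\rho,\rho)$ with $\rho:[0,1]\to[0,T]$ continuous, nondecreasing and onto; the functions $\rho_k$ vanishing on $[0,1-1/k]$ and rising linearly to $T$ on $[1-1/k,1]$ all lie in $\Pi_x$, are not equicontinuous, and converge pointwise to a discontinuous limit. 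Nothing forces the near-optimal representations $(\tilde u^n,\tilde r^n)\in\Pi_x$ extracted from the infimum to avoid this degeneration (composing an almost-optimal pair with a common continuous nondecreasing surjection of $[0,1]$ does not change the achieved distance), so the subsequence extraction on the $x$-side has no basis. The standard repair, and the route Whitt takes, is a reparametrization lemma rather than compactness: any two elements of $\Pi_x$ can be made uniformly close by composing one of them with a suitable increasing time change of $[0,1]$; applying that time change simultaneously to $(u^n,r^n)$ and $(\tilde u^n,\tilde r^n)$ keeps the former in $\Pi_{x^n}$ and brings the latter uniformly close to a \emph{fixed} $(u,r)\in\Pi_x$.

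The remainder of the sketch is sound in outline. (2) $\Rightarrow$ (1) is immediate from the definition of $d_{M_1}$, (2) $\Rightarrow$ (3) is correct as described, and your derivation of \eqref{local-uniform-convergence-at-continuity} from item 3 --- bracketing $s$ by continuity points $t'<s<t''$, invoking $|x^n(s)-[x^n(t'),x^n(t'')]|\le w_s(x^n,\delta)$, and using that every point of a segment with both endpoints within $\eta/3$ of $x(t)$ is itself within $\eta/3$ of $x(t)$ --- is exactly the right argument, modulo choosing $t',t''$ just outside the window $[t-\delta,t+\delta]$ so that the bound covers every $s$ in it. The hard implication (3) $\Rightarrow$ (2) is only gestured at (matching the finitely many jumps of size at least $\epsilon$ and absorbing the rest into the oscillation function); that is tolerable for a cited textbook result, but be aware that this matching argument and the reparametrization lemma above are where essentially all of the work in Whitt's proof lies.
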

\begin{remark}\label{M1-stronger-L1}
\begin{itemize}
\item[(1)] When restricted to $\mathcal C(0,T)$, the uniform topology is equivalent to the $M_1$ topology. Indeed, when $x^n\in\mathcal C(0,T)$ and $\|x^n-x\|\rightarrow 0$, then $x\in\mathcal C(0,T)$. For any $r:[0,1]\rightarrow[0,T]$, define
    \[
        u^n(t):=x^n_{r(t)},\qquad u(t):=x_{r(t)}.
    \]
    Thus, $(u^n,r)$ and $(u,r)$ can serve as a parameter representation of $x^n$ and $x$, respectively. Moreover, it holds that $\|u^n-u\|\rightarrow 0$. So we have $x^n\rightarrow x$ in $M_1$ by Proposition \ref{characterization-convergence-M1}(2). On the other hand, when $x^n,x\in\mathcal C(0,T)$ and $x^n\rightarrow x$ in $M_1$, by Proposition \ref{characterization-convergence-M1}(2), there exist parameter representations $(u^n,r^n)$ and $(u,r)$ of $x^n$ and $x$, respectively, such that
    \[
        u^n(t)=x^n_{r^n(t)}, ~u(t)=x_{r(t)},~\|u^n-u\|\rightarrow 0\textrm{ and }\|r^n-r\|\rightarrow 0.
    \]
    To show that $\|x^n-x\|\rightarrow 0$, it is sufficient to prove that $t^n\rightarrow t$ implies $|x^n_{t^n}-x_t|\rightarrow 0$. Let $r^n(s^n)=t^n$ and $r(s)=t$. Then we have
    \[
        |r(s^n)-r(s)|\leq |r^n(s^n)-r(s^n)|+|r^n(s^n)-r(s)|\rightarrow 0.
    \]
    So we get
    \[
        |x^n_{t^n}-x_t|=|u^n(s^n)-u(s)|\leq |u^n(s^n)-u(s^n)|+|u(s^n)-u(s)|=|u^n(s^n)-u(s^n)|+|x_{r(s^n)}-x_{r(s)}|\rightarrow 0.
    \]
\item[(2)]
Proposition \ref{characterization-convergence-M1}(3) implies that $(\mathcal{D}(0,T),d_{M_1})$ convergence is stronger than $L^{\alpha}[0,T]$ convergence, for any $\alpha>0$. In fact, if $x^n\rightarrow x$ in $M_1$, then $x^n_t\rightarrow x_t$ for a.e. $t\in[0,T]$, due to Proposition \ref{characterization-convergence-M1}(3). Moreover,
\[
    |x^n_t-x_t|^{\alpha}\,\leq 2^{\alpha}\left(d_{M_1}^{\alpha}(x^n,0)+d_{M_1}^{\alpha}(x,0)\right)\rightarrow 2^{\alpha+1}d_{M_1}(x,0)<\infty.
\]
Thus, the assertion follows from dominated convergence.
\end{itemize}
\end{remark}
\begin{proposition}\label{characterization-relative-compactness-strong-M1}
A subset $A$ of $(\mathcal{D}(0,T), d_{M_1})$ is relatively compact w.r.t. $M_1$ topology if and only if
     \begin{align}\label{uniform-boundedness}
        \sup_{x\in A}||x||<\infty
     \end{align}
    and
     \begin{align}\label{oscillation-condition-M1-compactness}
        \lim_{\delta \downarrow 0}\sup_{x\in A}w'_s(x,\delta)=0,
     \end{align}
    where
    \begin{align}
        w'_s(x,\delta)=w_s(x,\delta)\vee\bar{v}(x,0,\delta)\vee\bar{v}(x,T,\delta).
   \end{align}
\end{proposition}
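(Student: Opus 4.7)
The plan is to mimic the Arzel\`a--Ascoli theorem, transporting it from $\mathcal{D}(0,T)$ to the space of parameter representations in $C([0,1],\mathbb{R}^d\times[0,T])$ via the characterization in Proposition \ref{characterization-convergence-M1}(2). The condition $\sup_{x\in A}\|x\|<\infty$ together with $\lim_{\delta\downarrow 0}\sup_{x\in A}w'_s(x,\delta)=0$ will play the roles of uniform boundedness and equicontinuity, respectively, at the level of parameter representations.

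For necessity, assume $A$ is relatively compact. First, every $M_1$-convergent sequence satisfies $\|x_n\|\to\|x\|$ (this follows by taking parameter representations $(u_n,r_n)\to(u,r)$ uniformly and observing $\|x_n\|=\|u_n\|\to\|u\|=\|x\|$), so $x\mapsto\|x\|$ is continuous and hence bounded on the compact closure $\bar A$. For (\ref{oscillation-condition-M1-compactness}), argue by contradiction: if there exist $\eta>0$, $\delta_n\downarrow0$ and $x_n\in A$ with $w'_s(x_n,\delta_n)>\eta$, then after passing to an $M_1$-convergent subsequence $x_n\to x$, one shows $w'_s(\cdot,\delta)$ is essentially upper semi-continuous enough that $\eta\le\limsup w'_s(x_n,\delta_n)\le w'_s(x,2\delta)$ for any fixed $\delta>0$, which contradicts $w'_s(x,\delta)\to 0$ as $\delta\downarrow0$ for each individual $x\in\mathcal{D}(0,T)$.

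For sufficiency, given any sequence $\{x_n\}\subseteq A$, the task is to choose parameter representations $(u_n,r_n)\in\Pi_{x_n}$ that form a uniformly bounded and equicontinuous family in $C([0,1],\mathbb{R}^d\times[0,T])$, then apply Arzel\`a--Ascoli to extract uniformly convergent subsequences $u_n\to u$, $r_n\to r$; the continuous pair $(u,r)$ parametrizes a c\`adl\`ag function $x$, and Proposition \ref{characterization-convergence-M1}(2) delivers $x_n\to x$ in $M_1$. Uniform boundedness of $(u_n,r_n)$ is immediate from $\sup_n\|x_n\|<\infty$ together with $r_n([0,1])\subseteq[0,T]$. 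The equicontinuity is where (\ref{oscillation-condition-M1-compactness}) is used: the standard construction of parameter representations (reserving an interval of length proportional to a jump's size to traverse the segment $[x_{t-},x_t]$ linearly) can be carried out so that the modulus of continuity of $(u_n,r_n)$ is controlled by $w'_s(x_n,\cdot)$ plus $\bar v(x_n,0,\cdot)$ and $\bar v(x_n,T,\cdot)$, all of which tend to $0$ uniformly in $n$ by assumption.

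The main obstacle is the explicit construction of the parameter representations with a \emph{uniformly} controlled modulus of continuity across the entire family $A$. One has to arrange a common partition of $[0,1]$ into ``jump blocks'' and ``continuous blocks'' that works simultaneously for all $x\in A$, and verify that small oscillations in $[0,1]$-parameter translate into small spatial oscillations and small time oscillations. This reduces to quantitative estimates of the type $|u_x(s_1)-u_x(s_2)|\leq w_s(x,r_x(s_2)-r_x(s_1))+\textrm{boundary terms}$, whose uniform smallness in $x\in A$ is precisely what (\ref{oscillation-condition-M1-compactness}) guarantees.
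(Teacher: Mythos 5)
First, note that the paper itself offers no proof of this proposition: it is quoted from Whitt's monograph (Chapter 12, where it appears as the compactness characterization for the strong $M_1$ topology), and Whitt's argument for sufficiency proceeds by exhibiting, for each $\epsilon>0$, a finite $\epsilon$-net of piecewise-constant functions on a fixed grid (total boundedness), rather than by an Arzel\`a--Ascoli argument on parameter representations. So your route is genuinely different from the cited one, and while it is a natural and, I believe, workable strategy, as written it has a real gap precisely at the point you yourself flag as ``the main obstacle''. The quantitative estimate you propose, $|u_x(s_1)-u_x(s_2)|\leq w_s(x,r_x(s_2)-r_x(s_1))+\textrm{boundary terms}$, is false at jumps: on a parameter block devoted to traversing a jump of size $\alpha$ one has $r_x(s_2)=r_x(s_1)$, hence $w_s(x,0)=0$, while the left-hand side ranges up to $\alpha$. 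The correct control must treat jump blocks (where Lipschitz bounds come from allotting parameter length proportional to jump size) separately from continuous blocks, and on the continuous blocks the relevant modulus is an \emph{oscillation} $\bar v$, which is \emph{not} dominated by $w_s$ (a monotone function has $w_s\equiv 0$ but can rise by $1$ over an arbitrarily short interval through many small jumps). Making the family of representations equicontinuous \emph{uniformly over} $A$ therefore requires a careful interpolation between an arc-length-type parametrization (available only where the variation is controlled) and a time parametrization (available only where the amplitude is controlled); conditions \eqref{uniform-boundedness}--\eqref{oscillation-condition-M1-compactness} do suffice for this, but the construction is the entire content of the sufficiency proof and is not carried out here.

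A second, smaller gap in the sufficiency direction: after extracting a uniform limit $(u,r)$ by Arzel\`a--Ascoli you assert that it ``parametrizes a c\`adl\`ag function''. This is not automatic for a uniform limit of parameter representations --- the limit curve could a priori fail to traverse each time-section monotonically (think of up--down--up oscillations collapsing onto a single time), in which case it is not the completed graph of any $x\in\mathcal{D}(0,T)$. Ruling this out is a second, independent use of \eqref{oscillation-condition-M1-compactness} (the limit section at each $t$ must be a segment traversed monotonically, and $r$ must be onto $[0,T]$), and it needs to be argued explicitly. The necessity direction is essentially right, though the asserted semicontinuity $\limsup_n w_s'(x_n,\delta_n)\leq w_s'(x,2\delta)$ deserves a proof: $M_1$-convergence does not give pointwise convergence at discontinuities, so the comparison has to be made through graph points and a small time distortion, or, more simply, through a finite $\epsilon$-net of the compact closure.
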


In \cite{Whitt-2002}, it is assumed that $x_{0-}=x_0$, which implies there is no jump at the initial time. For singular control problems it is natural to admit jumps a the initial time. It is also implied by Proposition \ref{characterization-relative-compactness-strong-M1} that the terminal time $T$ is a continuous point of $x\in\mathcal{D}(0,T)$. This, too, is not appropriate for singular control problems. In order to adapt the relative compactness criteria stated in Proposition \ref{characterization-relative-compactness-strong-M1} to functions with jumps at $0$ and $T$, we work on the extended state spaces $\widetilde{\mathcal{D}}(\mathbb{R})$ and $\widetilde{\mathcal{A}}(\mathbb{R})$. Convergence in $\widetilde{\mathcal{D}}(\mathbb{R})$ can be defined as convergence in $\mathcal{D}(\mathbb{R})$, where a sequence $\{x^n, n \geq 1\}$ converges to $x$ in $\mathcal{D}(\mathbb{R})$ if and only if the sequences $\{x^n|_{[a,b]}, n \geq 1\}$ converge to $x|_{[a,b]}$ for all $a<b$ at which $x$ is continuous; see \cite[Chapter 3]{Whitt-2002}.

Relative compactness of a sequence $\{x^n,n\geq 1\}\subseteq \widetilde{\mathcal{D}}(\mathbb{R})$ is equivalent to that of the sequence $\{x^n|_{[a,b]},n\geq 1\}\subseteq \mathcal{D}[a,b]$ for any $a<0$ and $b>T$. Specifically, we have the following result.

\begin{proposition}\label{modified-M1}
The sequence $\{x^n,n\geq 1\}\subseteq \widetilde{\mathcal{D}}(\mathbb{R})$ is relatively compact if and only if
 \begin{equation}\label{modified-relative-compactness}
 \begin{split}
        \sup_{n}||x_n||<\infty \quad \mbox{and} \quad \lim_{\delta \downarrow 0}\sup_{x\in A}\widetilde{w}_s(x,\delta)=0,
 \end{split}
 \end{equation}
where the modified oscillation function $\widetilde{w}_s$ is defined as
\begin{equation}\label{modified-oscillation}
    \widetilde{w}_s(x,\delta)={w}_s(x,\delta)+\sup_{0\leq s<t\leq \delta}|x_s-[0,x_t]|.
\end{equation}
\end{proposition}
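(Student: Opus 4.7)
My plan is to reduce this to the classical Whitt criterion (Proposition \ref{characterization-relative-compactness-strong-M1}) applied on a finite interval $[a,b]$ with $a<0<T<b$, and then show that on this interval the Whitt oscillation function reduces, for small $\delta$, to $\widetilde{w}_s$.

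First, I recall that by the definition of convergence in $\mathcal{D}(\mathbb{R})$ (see the discussion before the statement), relative compactness of $\{x^n\}\subseteq \widetilde{\mathcal{D}}(\mathbb{R})$ is equivalent to relative compactness of $\{x^n|_{[a,b]}\}$ in $\mathcal{D}[a,b]$ for some (and then every) pair $a<0<T<b$ at which every element is continuous. Since every $x \in \widetilde{\mathcal{D}}(\mathbb{R})$ is identically $0$ on $(-\infty,0)$ and identically $x_T$ on $(T,\infty)$, any such $a,b$ works, and moreover $\|x\|_{[a,b]}=\|x\|$, so the uniform-boundedness conditions in \eqref{modified-relative-compactness} and in Proposition \ref{characterization-relative-compactness-strong-M1} coincide. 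Also, for $\delta<\min(-a,\,b-T)$ the boundary oscillations $\bar v(x,a,\delta)$ and $\bar v(x,b,\delta)$ vanish identically, because $x$ is constant on each of $[a,a+\delta]\subseteq[a,0)$ and $[b-\delta,b]\subseteq(T,b]$. Hence the Whitt criterion on $[a,b]$ reduces to controlling the strong $M_1$ oscillation function $w^{[a,b]}_s$.

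The core step is to compare $w^{[a,b]}_s(x,\delta)$ with $\widetilde{w}_s(x,\delta)$. I would carry out a case analysis on triples $t_1<t_2<t_3$ in $[a,b]$ with $t_3-t_1\leq 2\delta$, using that $x\equiv 0$ on $[a,0)$ and $x\equiv x_T$ on $[T,b]$. For $\delta$ smaller than $\min(-a,\,b-T)/2$, only two configurations produce a nonzero contribution to $|x_{t_2}-[x_{t_1},x_{t_3}]|$: (i) all three points lie in $[0,T]$, yielding precisely $w_s(x|_{[0,T]},\delta)$; and (ii) $t_1<0\leq t_2<t_3\leq \delta$, in which case $x_{t_1}=0$ and the term becomes $|x_{t_2}-[0,x_{t_3}]|$, producing the supplementary quantity $\sup_{0\leq s<t\leq\delta}|x_s-[0,x_t]|$. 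Every other combination collapses to $0$: triples with all $t_i<0$ or all $t_i>T$ give $0$ by constancy; triples with $t_2,t_3>T$ give $|x_T-[x_{t_1},x_T]|=0$ via the choice $\alpha=0$ in the line segment; triples with $t_1,t_2\in[0,T]$ and $t_3>T$ reduce to the $t_3=T$ case of $w_s(x|_{[0,T]},\delta)$ since $x_{t_3}=x_T$. This yields the sandwich
\[
\max\!\left(w_s(x,\delta),\ \sup_{0\leq s<t\leq \delta}|x_s-[0,x_t]|\right)\leq w^{[a,b]}_s(x,\delta)\leq \widetilde{w}_s(x,\delta)\leq 2\,w^{[a,b]}_s(x,\delta),
\]
so $\lim_{\delta\downarrow 0}\sup_n w^{[a,b]}_s(x^n,\delta)=0$ if and only if $\lim_{\delta\downarrow 0}\sup_n\widetilde{w}_s(x^n,\delta)=0$.

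Combining the two steps, Proposition \ref{characterization-relative-compactness-strong-M1} applied on $[a,b]$ is equivalent to condition \eqref{modified-relative-compactness}, which gives the claim. The main obstacle I anticipate is purely organisational: the bookkeeping in the case analysis needs to be done carefully, in particular handling configurations where $t_3>T$ or $t_1<0$ and verifying that nothing beyond the two nontrivial cases (i) and (ii) survives. Once this verification is done, the rest is a direct application of Whitt's theorem.
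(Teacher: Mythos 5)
Your two-step reduction (restrict to $[a,b]$ with $a<0<T<b$, apply Whitt's criterion there, and identify the surviving configurations in the oscillation function) is exactly the justification the paper intends: Proposition \ref{modified-M1} is stated without proof, and the surrounding discussion -- equivalence with relative compactness of the restrictions to $[a,b]$, plus the remark that $\widetilde{w}_s$ adds the line segment between $0-$ and $0$ while right-continuity rules out one between $T$ and $T+$ -- is precisely your argument, which you have simply fleshed out. One displayed inequality in your sandwich is false as written, however: $w^{[a,b]}_s(x,\delta)\leq\widetilde{w}_s(x,\delta)$ fails, because a window of radius $\delta$ that contains a point $t_1<0$ can reach up to time $2\delta$, so the case-(ii) contribution is only controlled by $\sup_{0\leq s<t\leq 2\delta}|x_s-[0,x_t]|$, not by the supremum over $t\leq\delta$. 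Concretely, for $x$ equal to $1$ on $[0,\tfrac{3}{2}\delta)$ and $0$ elsewhere on $[0,T]$ (extended by $0$ off $[0,T]$), one has $\widetilde{w}_s(x,\delta)=0$, yet the triple $t_1=-\epsilon$, $t_2=\delta$, $t_3=1.9\delta$ gives $w^{[a,b]}_s(x,\delta)\geq|x_{t_2}-[0,0]|=1$. The correct upper bound is $w^{[a,b]}_s(x,\delta)\leq\widetilde{w}_s(x,2\delta)$ (the same doubling also cleans up the configurations with $t_3>T$), and combined with your lower bound $\widetilde{w}_s(x,\delta)\leq 2\,w^{[a,b]}_s(x,\delta)$ this still yields the equivalence of the two conditions $\lim_{\delta\downarrow 0}\sup_n(\cdot)=0$, so the proof goes through after this cosmetic repair.
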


We notice that the modified oscillation function $\widetilde{w}_s$ is defined in terms of the original oscillation function ${w}_s$ and the line segment (if it exists) between $0-$ and $0$\footnote{Due to the right-continuity of the elements in $\widetilde{\mathcal{D}}(\mathbb{R})$ there is no line segment between $T$ and $T+$.}. 
\begin{corollary}\label{compactness-finite-fuel}
Let $A=\{z\in\widetilde{\mathcal{A}}(\mathbb{R}):z_T\leq K\}$ for some $K>0$. Then $A$ is $(\widetilde{\mathcal{D}}(\mathbb{R}),M_1)$ compact.
\end{corollary}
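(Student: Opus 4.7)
My plan is to apply the relative compactness criterion of Proposition \ref{modified-M1} together with a closedness argument based on the pointwise convergence implied by Proposition \ref{characterization-convergence-M1}(3).

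First I verify relative compactness of $A$. Uniform boundedness is immediate: since every $z\in A$ is nondecreasing with $z_t=0$ for $t<0$ and $z_t=z_T$ for $t>T$, one has $\|z\|_{\infty}=z_T\leq K$. For the modified oscillation, I claim $\widetilde{w}_s(z,\delta)=0$ for all $z\in A$ and all $\delta>0$. Indeed, for the first summand $w_s(z,\delta)$, given any $t_1<t_2<t_3$, monotonicity yields $z_{t_1}\leq z_{t_2}\leq z_{t_3}$, so $z_{t_2}\in[z_{t_1},z_{t_3}]$ as a line segment and hence $|z_{t_2}-[z_{t_1},z_{t_3}]|=0$. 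For the boundary summand, take $0\leq s<t\leq\delta$: since $z_{0-}=0$ and $z$ is nondecreasing, $0\leq z_s\leq z_t$, so $z_s\in[0,z_t]$ and the distance vanishes. Hence both conditions in \eqref{modified-relative-compactness} hold (the limit in $\delta$ being trivial), and Proposition \ref{modified-M1} gives relative compactness of $A$.

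Next I establish that $A$ is closed in $(\widetilde{\mathcal{D}}(\mathbb{R}),d_{M_1})$. Suppose $z^n\to z$ in $M_1$ with $z^n\in A$. By Proposition \ref{characterization-convergence-M1}(3), $z^n_t\to z_t$ at every $t\notin\mathrm{Disc}(z)$ and also at $T$. In particular $z_T=\lim_n z^n_T\leq K$. For any continuity points $s<t$ of $z$, $z_s=\lim_n z^n_s\leq \lim_n z^n_t=z_t$; combined with the right-continuity of $z$, this shows $z$ is nondecreasing on $\mathbb{R}$. The normalizations $z_t=0$ for $t<0$ and $z_t=z_T$ for $t>T$ pass to the limit at continuity points and then propagate by right-continuity to all of $\mathbb{R}$. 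Thus $z\in A$, so $A$ is closed.

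Combining relative compactness with closedness yields that $A$ is $(\widetilde{\mathcal{D}}(\mathbb{R}),d_{M_1})$ compact. There is no real obstacle: the key observation is simply that monotonicity forces the strong $M_1$ oscillation to vanish identically, so both ingredients of the relative compactness criterion are trivially met, while closedness reduces to the pointwise convergence at continuity points guaranteed by the $M_1$ topology.
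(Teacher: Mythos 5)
Your proof is correct and takes essentially the same route as the paper, which also invokes Proposition \ref{modified-M1} and the observation that monotonicity forces $w_s(z,t,\delta)=0$. You are more thorough in two minor respects: you explicitly check the boundary summand of the modified oscillation $\widetilde{w}_s$ (the paper's one-line proof only mentions $w_s$), and you supply the closedness argument needed to pass from relative compactness to compactness, which the paper leaves implicit.
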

\begin{proof}
This follows from Proposition \ref{modified-M1} as $w_s(z,t,\delta)=0$ for each $z\in A$, $t \in \mathbb{R}$ and $\delta > 0$.
\end{proof}
%
\begin{proposition}\label{modified-tightness-criterion}
A sequence of probability measures $\{\mathbb{P}_n\}_{n\geq 1}$ on $\widetilde{\mathcal{D}}(\mathbb{R})$ is tight if and only if
\newline(1) for each $\epsilon>0$, there exists $c$ large enough such that
   \begin{align}\label{tightness-criterion-uniform-boundedness}
      \sup_n\mathbb{P}_n(||x||>c)<\epsilon;
   \end{align}
(2) for each $\epsilon>0$ and $\eta>0$, there exists $\delta>0$ small enough such that
    \begin{align}\label{tightness-criterion-modified-oscillation}
      \sup_n\mathbb{P}_n(\widetilde{w}_s(x,\delta)\geq \eta)<\epsilon.
    \end{align}
\end{proposition}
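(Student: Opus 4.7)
The plan is to deduce Proposition \ref{modified-tightness-criterion} from the relative compactness criterion already recorded in Proposition \ref{modified-M1}, via Prokhorov's theorem. Since $\widetilde{\mathcal{D}}(\mathbb{R})$ endowed with the $M_1$ topology is Polish (as noted in Section \ref{intro-MFG-singular}), tightness of $\{\mathbb{P}_n\}_{n\geq 1}$ is equivalent to the existence, for each $\epsilon>0$, of a compact set $K_\epsilon\subseteq \widetilde{\mathcal{D}}(\mathbb{R})$ with $\sup_n \mathbb{P}_n(K_\epsilon^c)<\epsilon$. Proposition \ref{modified-M1} then provides the bridge between such compact sets and the two conditions (\ref{tightness-criterion-uniform-boundedness})--(\ref{tightness-criterion-modified-oscillation}).

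For the necessity direction, I would fix $\epsilon>0$ and choose $K_\epsilon$ with $\sup_n \mathbb{P}_n(K_\epsilon^c)<\epsilon$. By Proposition \ref{modified-M1}, $K_\epsilon$ satisfies $\sup_{x\in K_\epsilon}\|x\|=:c<\infty$ and $\lim_{\delta\downarrow 0}\sup_{x\in K_\epsilon}\widetilde{w}_s(x,\delta)=0$. Taking complements and applying $\mathbb{P}_n$ to $\{\|x\|>c\}$ and $\{\widetilde{w}_s(x,\delta)\geq \eta\}$ yields (\ref{tightness-criterion-uniform-boundedness}) and (\ref{tightness-criterion-modified-oscillation}), respectively, uniformly in $n$.

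For the sufficiency direction, I would construct the relatively compact set explicitly. Given $\epsilon>0$, use (\ref{tightness-criterion-uniform-boundedness}) to pick $c$ with $\sup_n\mathbb{P}_n(\|x\|>c)<\epsilon/2$, and use (\ref{tightness-criterion-modified-oscillation}) to pick, for each $k\in\mathbb{N}$, a $\delta_k>0$ with
\begin{equation*}
\sup_n \mathbb{P}_n\!\left(\widetilde{w}_s(x,\delta_k)\geq \tfrac{1}{k}\right)<\frac{\epsilon}{2^{k+1}}.
\end{equation*}
Define
\begin{equation*}
A_\epsilon := \{x\in \widetilde{\mathcal{D}}(\mathbb{R}) : \|x\|\leq c,\ \widetilde{w}_s(x,\delta_k) < \tfrac{1}{k} \text{ for all } k\in\mathbb{N}\}.
\end{equation*}
A countable subadditivity argument gives $\sup_n \mathbb{P}_n(A_\epsilon^c)<\epsilon$. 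By construction, the elements of $A_\epsilon$ are uniformly bounded and the oscillation function goes to zero uniformly on $A_\epsilon$, so Proposition \ref{modified-M1} shows $A_\epsilon$ is relatively compact; its closure $K_\epsilon=\overline{A_\epsilon}$ is then the required compact set with $\sup_n\mathbb{P}_n(K_\epsilon^c)<\epsilon$.

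The main subtlety I anticipate is ensuring that passing from $A_\epsilon$ to its closure preserves the oscillation bound so that Proposition \ref{modified-M1} genuinely applies — in other words, that $\widetilde{w}_s(\cdot,\delta)$ is upper semi-continuous (or at least that relative compactness of $A_\epsilon$ transfers to its closure). For the standard strong $M_1$ oscillation this follows from the characterization of $M_1$ convergence via parameter representations together with Proposition \ref{characterization-convergence-M1}, and an analogous argument handles the added endpoint term $\sup_{0\leq s<t\leq \delta}|x_s-[0,x_t]|$ in $\widetilde{w}_s$. The other routine technicalities — measurability of $\{\widetilde{w}_s(\cdot,\delta)\geq\eta\}$ and of $\{\|x\|>c\}$, and the identification of tightness with relative compactness in the space $\mathcal{P}(\widetilde{\mathcal{D}}(\mathbb{R}))$ — follow from Polishness of the underlying space and standard Prokhorov-type arguments and do not require separate work.
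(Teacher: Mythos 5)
Your argument is correct, and it is the standard one; the paper itself offers no proof of this proposition (it is stated in Appendix B as a recalled fact, an adaptation of the tightness criterion in Whitt's book to the modified oscillation function $\widetilde{w}_s$ on $\widetilde{\mathcal{D}}(\mathbb{R})$). Both directions of your deduction from Proposition \ref{modified-M1} go through: necessity by taking complements of a compact set realizing tightness, sufficiency by the countable intersection $A_\epsilon$ with the $\epsilon/2^{k+1}$ budget, using that $\widetilde{w}_s(x,\cdot)$ is nondecreasing in $\delta$ so that the bound $\widetilde{w}_s(x,\delta_k)<1/k$ controls all smaller $\delta$. The one "subtlety" you flag is a non-issue: Proposition \ref{modified-M1} characterizes \emph{relative} compactness, i.e.\ compactness of the closure, so once $A_\epsilon$ satisfies \eqref{modified-relative-compactness} its closure $K_\epsilon$ is automatically compact and $\sup_n\mathbb{P}_n(K_\epsilon^c)\leq\sup_n\mathbb{P}_n(A_\epsilon^c)<\epsilon$; no upper semi-continuity of $\widetilde{w}_s(\cdot,\delta)$ is needed. (Measurability of $\{\widetilde{w}_s(\cdot,\delta)\geq\eta\}$ does deserve the one-line remark you give, since the Borel $\sigma$-algebra coincides with the Kolmogorov $\sigma$-algebra here.)
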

The following proposition shows that if two $M_1$ limits do not jump at the same time, then the $M_1$ convergence preserves by the addition operation.
\begin{proposition}\label{M1-continuity-addition}
If $x^n\rightarrow x$ and $y^n\rightarrow y$ in $(\mathcal{D}(0,T), d_{M_1})$, and $Disc(x)\cap Disc(y)={\O}$, then
   \begin{align}
      x^n+y^n\rightarrow x+y~in~M_1.
   \end{align}
\end{proposition}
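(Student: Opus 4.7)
My plan is to verify the two equivalent conditions in Proposition \ref{characterization-convergence-M1}(3): pointwise convergence of $x^n + y^n$ to $x+y$ at every $t\in[0,T]\setminus Disc(x+y)$ including the endpoints, and the oscillation bound $\lim_{\delta\to 0}\limsup_n w_s(x^n+y^n,\delta)=0$.

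The pointwise part will be easy. The hypothesis $Disc(x)\cap Disc(y)=\emptyset$ rules out jump cancellation and yields $Disc(x+y)=Disc(x)\cup Disc(y)$. Hence any $t\notin Disc(x+y)$ lies outside both $Disc(x)$ and $Disc(y)$, and the assumed $M_1$ convergences give $x^n(t)\to x(t)$ and $y^n(t)\to y(t)$, so $(x^n+y^n)(t)\to(x+y)(t)$; the same argument handles $t=0$ and $t=T$, each of which lies in the jump set of at most one of $x,y$.

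The main obstacle will be the oscillation bound, because $w_s$ is not subadditive. The strategy I will use is localization around the large jumps. Fix $\epsilon>0$ and let $J^x_\epsilon$ and $J^y_\epsilon$ be the finite sets of discontinuity points of $x$, respectively $y$, with jump magnitude exceeding $\epsilon$; by hypothesis they are disjoint. I will choose $\delta_0>0$ small enough that the $2\delta_0$-neighborhoods of the points in $J^x_\epsilon\cup J^y_\epsilon$ are pairwise disjoint, that $x$ has oscillation below $\epsilon$ on every subinterval of length $\delta_0$ disjoint from every $J^x_\epsilon$-neighborhood, and that $y$ has oscillation below $\epsilon$ on every subinterval of length $\delta_0$ contained in any $J^x_\epsilon$-neighborhood (available because $y$ is continuous there), together with the symmetric statements swapping $x$ and $y$. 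The local uniform convergence at continuity points provided by Proposition \ref{characterization-convergence-M1} (see \eqref{local-uniform-convergence-at-continuity}) will then transfer these oscillation bounds, up to an additive factor of $\epsilon$, to $x^n$ and $y^n$ for $n$ large.

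The key algebraic step is the elementary inequality, valid for every $\lambda\in[0,1]$ by the triangle inequality and the fact that a convex combination cannot exceed the oscillation:
\[
\bigl|(x^n+y^n)(t_2)-\lambda(x^n+y^n)(t_1)-(1-\lambda)(x^n+y^n)(t_3)\bigr|\le\bigl|x^n(t_2)-\lambda x^n(t_1)-(1-\lambda)x^n(t_3)\bigr|+\mathrm{osc}(y^n,[t_1,t_3]).
\]
Minimizing in $\lambda$ yields
\[
\bigl|(x^n+y^n)(t_2)-[(x^n+y^n)(t_1),(x^n+y^n)(t_3)]\bigr|\le\bigl|x^n(t_2)-[x^n(t_1),x^n(t_3)]\bigr|+\mathrm{osc}(y^n,[t_1,t_3]),
\]
with a symmetric bound swapping $x^n$ and $y^n$. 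I will then split the supremum defining $w_s(x^n+y^n,\delta)$ over $t\in[0,T]$ into three cases. First, intervals $(t-\delta,t+\delta)$ lying in the common continuity region of $x$ and $y$, where $\mathrm{osc}(x^n)$ and $\mathrm{osc}(y^n)$ are both at most $2\epsilon$, so the distance-to-segment is bounded by $2\epsilon$. Second, intervals contained in a $J^x_\epsilon$-neighborhood, in which the first bound above gives $w_s(x^n+y^n,t,\delta)\le w_s(x^n,t,\delta)+2\epsilon$, and $\limsup_n w_s(x^n,\delta)\to 0$ as $\delta\to 0$ by the $M_1$ convergence $x^n\to x$. Third, the symmetric case of intervals inside a $J^y_\epsilon$-neighborhood. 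Combining the three cases will give $\limsup_n w_s(x^n+y^n,\delta)\le C\epsilon$ for all $\delta<\delta_0$ and some universal constant $C$, and sending $\epsilon\to 0$ finishes the verification of Proposition \ref{characterization-convergence-M1}(3) for the sum.
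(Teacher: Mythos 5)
Your overall architecture is sound: verifying Proposition \ref{characterization-convergence-M1}(3) for the sum is a legitimate route (the paper itself gives no proof and defers to Whitt, whose argument instead constructs a common parametric representation of $x^n$ and $y^n$), the identity $Disc(x+y)=Disc(x)\cup Disc(y)$ and the pointwise part are fine, and your key inequality
\[
\bigl|(x^n+y^n)(t_2)-[(x^n+y^n)(t_1),(x^n+y^n)(t_3)]\bigr|\le\bigl|x^n(t_2)-[x^n(t_1),x^n(t_3)]\bigr|+\mathrm{osc}(y^n,[t_1,t_3])
\]
is correct and is exactly the right tool. The gap is in the sentence ``the local uniform convergence at continuity points \eqref{local-uniform-convergence-at-continuity} will then transfer these oscillation bounds \ldots to $x^n$ and $y^n$.'' That transfer mechanism fails in general: \eqref{local-uniform-convergence-at-continuity} controls $x^n-x$ only near individual continuity points of $x$, whereas the intervals on which you need $\mathrm{osc}(x^n,\cdot)$ or $\mathrm{osc}(y^n,\cdot)$ to be small (the ``common continuity region'' in case 1, and the $J^x_\epsilon$-neighbourhoods in case 2) may contain infinitely many small jumps of $x$, respectively of $y$ — the hypothesis only forbids $y$ from jumping \emph{at} the points of $Disc(x)$, not near them. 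On such an interval there is no uniform convergence of $y^n$ to $y$, and $\mathrm{osc}(y,I)\le\epsilon$ does not by itself bound $\limsup_n\mathrm{osc}(y^n,I)$: an $M_1$-convergent sequence can oscillate wildly on $I$ as long as the excursions stay close to line segments.

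The repair stays inside your framework but uses a different estimate: for an interval $[a,b]$ of length at most $2\delta$ one has $\mathrm{osc}(y^n,[a,b])\le 2\,w_s(y^n,\delta)+2|y^n(b)-y^n(a)|$, because every intermediate value lies within $w_s(y^n,\delta)$ of the segment $[y^n(a),y^n(b)]$. Choosing the endpoints $a,b$ from a \emph{fixed finite} grid of common continuity points of $x$ and $y$ (continuity points are dense, and any interval of length $2\delta$ sits inside a grid interval of length $\le 4\delta$), the term $|y^n(b)-y^n(a)|$ converges to $|y(b)-y(a)|\le\mathrm{osc}(y,[a,b])\le C\epsilon$ by the pointwise part of Proposition \ref{characterization-convergence-M1}(3), while $\limsup_n w_s(y^n,\delta)\to 0$ as $\delta\to 0$ by the $M_1$ convergence of $y^n$; this gives the uniform-in-$t$ bound your case analysis needs. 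Two smaller points: your trichotomy of intervals is not exhaustive as written (an interval can straddle the boundary of a $J^x_\epsilon$-neighbourhood), so you should use nested neighbourhoods — disjoint from all $\delta_0$-neighbourhoods for case 1 versus contained in a $2\delta_0$-neighbourhood for cases 2 and 3, with $\delta<\delta_0/2$; and the claim that $x$ has oscillation below $\epsilon$ on every short interval avoiding $J^x_\epsilon$ should be weakened to $3\epsilon$ (obtained from a finite partition with cells of oscillation $<\epsilon$), which is harmless since you only claim a final bound $C\epsilon$.
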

\section{Sketch Proof of Proposition \ref{transfer-X-to-Y}}\label{sketch-proof}
 It is sufficient to establish the equivalence of martingale problems in Definition \ref{control-rule-mu} and Proposition \ref{transfer-X-to-Y}. Only the one-dimensional case is proved; the multi-dimensional case is similar.

 \vspace{1mm}
        Proposition \ref{transfer-X-to-Y} $\Rightarrow$ Definition \ref{control-rule-mu}: Without loss of generality (see \cite[Proposition 4.11 and Remark 4.12]{Karatzas-Shreve-1991}), we can take $\phi(y)=y,~y^2$ and following the proof of \cite[Proposition 4.6]{Karatzas-Shreve-1991}, we have that $M$ is a continuous martingale with the quadratic variation
        \[
            \langle M\rangle_t=\int_0^t\int_U a(s,X_s,\mu_s,u)\,Q_s(du)ds,
        \]
        where
        \[
            M_t=Y_t-\int_0^t\int_U b(s,X_s,\mu_s,u)\,Q_s(du)ds.
        \]
        By applying It\^o's formula to $\phi(X_t)$ and noting $X=Y+\int_0 c(s)\,dZ_s$, the desired result follows from
        \begin{equation*}
        \begin{split}
            \phi(X_t)=&\phi(X_{0-})+\int_0^t\int_U\phi'(X_s)b(s,X_s,\mu_s,u)\,Q_s(du)ds+\frac{1}{2}\int_0^t\int_U\phi''(X_s)a(s,X_s,\mu_s,u)\,Q_s(du)ds\\
            &+\int_0^t\phi'(X_{s-})c(s)\,dZ_s+\sum_{0\leq s\leq t}\left[\phi(X_s)-\phi(X_{s-})-\phi'(X_{s-})\triangle X_s\right]+\int_0^t\phi'(X_s)\,dM_s.
        \end{split}
        \end{equation*}
        \newline Definition \ref{control-rule-mu} $\Rightarrow$ Proposition \ref{transfer-X-to-Y}: By Proposition \ref{result-from-Karoui}, there exits $(\overline{X},\overline{Q},\overline{Z})$ and a martingale measure $\overline{M}$ with intensity $\overline{Q}$ on some extension $(\overline{\Omega},\overline{\mathcal{F}},\overline{\mathbb{P}})$, s.t.~\eqref{martingale-measure-SDE} holds and $\mathbb{P}\circ(X,Q,Z)^{-1}=\overline{\mathbb{P}}\circ (\overline{X},\overline{Q},\overline{Z})^{-1}$. Let
        \[
            \overline{Y}_{\cdot}=\overline{X}_{\cdot}-\int_0^{\cdot}c(s)\,d\overline{Z}_s.
        \]
        Then
        \[
            {Y}_{\cdot}:={X}_{\cdot}-\int_0^{\cdot}c(s)\,d{Z}_s\overset{d}=\overline{Y}.
        \]
        By applying It\^o's formula to $\phi(\overline{Y}_t)$,
        \[
            \phi(\overline{Y}_t)-\int_0^t\int_U\phi'(\overline{Y}_s)b(s,\overline{X}_s,\mu_s,u)\,\overline{Q}_s(du)ds-\frac{1}{2}\int_0^t\int_U\phi''(\overline{Y}_s)a(s,\overline{X}_s,\mu_s,u)\,\overline{Q}_s(du)ds
        \]
        is a martingale. Hence the following is also a martingale:
        \[
            \phi({Y}_t)-\int_0^t\int_U\phi'({Y}_s)b(s,{X}_s,\mu_s,u)\,{Q}_s(du)ds-\frac{1}{2}\int_0^t\int_U\phi''({Y}_s)a(s,{X}_s,\mu_s,u)\,{Q}_s(du)ds.
        \]
\end{appendix}

\end{document}